\numberwithin{equation}{section}
\newtheorem{cor}{Corollary}[section]
\newtheorem{corx}{Corollary}
\newtheorem{thm}[cor]{Theorem}
\newtheorem{thmx}[corx]{Theorem}
\newtheorem*{theorem*}{Theorem}
\newtheorem{prop}[cor]{Proposition}
\newtheorem{lemma}[cor]{Lemma}
\theoremstyle{definition}
\newtheorem{deff}[cor]{Definition}
\theoremstyle{remark}
\newtheorem{rmk}[cor]{Remark}
\newtheorem{remark}[cor]{Remark}
\newtheorem*{rmk*}{Remark}
\newtheorem{example}[cor]{Example}
\newtheorem*{notation}{Notation}
\newcommand{\R}{\mathbb{R}} 
\newcommand{\Z}{\mathbb{Z}}
\newcommand{\h}{\mathbb{H}}
\newcommand{\F}{\mathcal{F}}
\newcommand{\diag}{\mathrm{diag}}
\newcommand{\SL}{\mathrm{SL}}
\newcommand{\Isom}{\mathrm{Isom}}
\newcommand{\Id}{\mathrm{Id}}
\newcommand{\SO}{\mathrm{SO}}
\newcommand{\Sp}{\mathrm{Sp}}
\newcommand{\B}{\mathcal{B}}
\newcommand{\C}{\mathbb{C}}
\newcommand{\Ree}{\mathcal{R}e}
\newcommand{\Hit}{\mathrm{Hit}}
\newcommand{\Area}{\mathrm{Area}}
\newcommand{\Hol}{\mathrm{Hol}}
\newcommand{\Cone}{\mathrm{Cone}}
\newcommand{\RP}{\mathbb{RP}}
\newcommand{\CP}{\mathbb{CP}}
\newcommand{\na}{\nabla}
\newcommand{\tec}{Teichm\"uller }
\newcommand{\sU}{\mathcal{U}}
\newcommand{\sV}{\mathcal{V}}
\DeclareMathAlphabet{\mathpzc}{OT1}{pzc}{m}{it}
\title{Limits of Cubic Differentials and Buildings}
\author{John Loftin, Andrea Tamburelli and Michael Wolf}
\date{\today}
\thanks{}
\begin{document}

\begin{abstract} 
In the Labourie-Loftin parametrization of the Hitchin component of surface group representations into $\SL(3,\R)$, we prove an asymptotic formula for holonomy along rays in terms of local invariants of the holomorphic differential defining that ray.  Globally, we show that the corresponding family of equivariant harmonic maps to a symmetric space converge to a harmonic map into the asymptotic cone of that space. The geometry of the image may also be described by that differential: it is weakly convex and a (one-third) translation surface. We define a compactification of the Hitchin component in this setting for triangle groups that respects the parametrization by Hitchin differentials. 
\end{abstract}

\maketitle
\setcounter{tocdepth}{1}
\tableofcontents

\section{Introduction}

In a pioneering work in the 1980's and early 1990's, Hitchin and others (\cite{Hitchin_selfduality}, \cite{Corlette}, \cite{Simpson_Higgs}) developed a beautiful non-Abelian Hodge theory for character varieties of surface groups in higher rank Lie groups. The subject has been intensely studied ever since and while new perspectives, for example more synthetic/algebro-geometric \cite{FockGoncharov06} or dynamical \cite{Labourie:Anosov, Guichard08}, have emerged, there remain some basic questions about how to properly geometrically interpret Hitchin's original parametrizaton of a principal component of the character variety in terms of the holomorphic data he used. (Indeed, in \cite{Hitchin_Teichmuller}, Hitchin remarks, \enquote{Unfortunately, the analytical point of view used for the proofs gives no indication of the {\it geometrical} significance of the \tec component.}) The goal of this paper is to relate 
the holonomy of a representation in a Hitchin component to the local synthetic geometry of the holomorphic differential that Hitchin associates to it, at least in an asymptotic sense.

More precisely, we focus on the Hitchin component $\Hit_3$ of surface group representations into $\SL(3,\R)$.  Hitchin (\cite{Hitchin_Teichmuller}) parametrizes this component in terms of pairs $(q_2, q_3)$ of quadratic and cubic differentials $q_2 \in H^0(X_0, K_{X_0}^2), q_3 \in H^0(X_0, K_{X_0}^3)$ on a fixed Riemann surface $X_0$.  A parametrization, invariant under the action of the mapping class group, was given independently by Labourie (\cite{Labourie_cubic}) and Loftin (\cite{Loftin_thesis}): from their perspective, $\Hit_3$ may be seen as the cubic differential bundle $\mathcal{C}$ over the \tec space $\mathcal{T}(S)$. 

We study families defined by rays in this parametrization, and in particular the asymptotics.  Of course, a ray is defined as multiples $sq_0$, for $q_0$ a fixed cubic differential on a fixed Riemann surface $\Sigma=(S,J)$ and $s>0$, and so has holomorphic invariants that are projectively fixed; on the other hand, via the Labourie-Loftin parametrization, the ray defines a family of holonomies $\Hol_{s}$  of 
representations $\rho_s$. We relate the asymptotics of the holonomies to the holomorphic invariants of the form $q_0$: we give a formula for the leading term of the holonomy $\Hol_{s}([\gamma])$ of a curve class $[\gamma]$  in terms of the intersection number of $[\gamma]$ with the form $q_0$. The class $[\gamma]$ may be represented by a geodesic $c_\gamma$ in the metric $|q_0|^{\frac{2}{3}}$: this metric is flat away from the zeroes of $q_0$, with cone points of total angle $2\pi(1+ \frac{1}{3})\deg_{p}(q_0)$ at a zero $p$ of $q_0$. The segments, known as {\it saddle connections}, between the zeroes are denoted $c_1, ...., c_l$.\footnote{Saddle connections in this paper are just Euclidean geodesic segments, not restricted to be horizontal in some way.} Of course, in such a zero-free region, the cubic differential $q_0$ has three well-defined cube roots $\phi_1,\phi_2, \phi_3$ and we may compute intersection numbers $-2^{\frac{2}{3}}\Ree \int_\delta \phi_j$ of curves $\delta$ against these roots.  Let $\nu^i$ denote the largest of the real parts of the three periods along $c_{i}$; this is equivalent to the logarithm of the largest eigenvalue of the holonomy of a natural development of that saddle connection into affine space. 
 Then our main results on asymptotic holonomy may be summarized in the theorem below: we comment later on what is elided in the statement as well as some of the subtleties in its statement and hence proof.

\begin{thmx} \label{thm-asymptotic-singular}
For every curve class $[\gamma]$, we have 
\begin{equation} \label{eqn:general asymptotic holonomy intro}
	\lim_{s\to+\infty} \frac{\log\| \Hol_s([\gamma])\|} { s^{\frac13}} = \sum_{i=1} \nu^i,
\end{equation}
where $\|\cdot\|$ is any submultiplicative matrix norm. 

In particular, if $\sigma_{j}(\Hol_s(c_{\gamma}))$ denotes the $j$-th largest singular value of $\Hol_{s}(c_{\gamma})$, where $c_{\gamma}$ is a flat geodesic in the homotopy class of $\gamma$ with saddle connections ${c}_{1}, \dots, {c}_{l}$, then
\begin{equation} \label{eqn:general asymptotic singular values intro}
    \lim_{s\to +\infty}\frac{\log(\sigma_{j}(\Hol_{s}(c_{\gamma})))}{s^{\frac{1}{3}}}=\sum_{i=1}^{l} \lim_{s\to +\infty}\frac{\log(\sigma_{j}(\Hol_{s}({c}_{i})))}{s^{\frac{1}{3}}}
    \end{equation}
for $j=1,2,3$.  
\end{thmx}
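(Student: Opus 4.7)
The plan is to reduce the theorem to two ingredients: (i) asymptotics of holonomy along a single saddle connection, which I expect to be established earlier in the paper via a local WKB-type analysis of the Hitchin connection in a zero-free region, and (ii) a concatenation argument controlling how singular values combine across the cone points where saddle connections meet. First I would exploit the explicit local structure on a zero-free strip around each saddle connection $c_i$. Choosing cube roots $\phi_{1}, \phi_{2}, \phi_{3}$ of $q_0$ and an adapted local frame, the Hitchin harmonic-bundle equations associated to the scaled differential $s q_0$ reduce, after a gauge transformation, to a perturbation of a diagonal system with leading behavior $\exp(s^{1/3} D_i)$, where $D_i = -2^{2/3}\diag\left(\Ree \int_{c_i}\phi_{1}, \Ree \int_{c_i}\phi_{2}, \Ree \int_{c_i}\phi_{3}\right)$. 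Thus the individual singular values of $\Hol_s(c_i)$ satisfy $s^{-1/3}\log\sigma_j(\Hol_s(c_i)) \to -2^{2/3}\Ree \int_{c_i}\phi_{\tau_i(j)}$ for a suitable permutation $\tau_i$; in particular the largest is $\nu^i$.

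Next I would analyze the transition at each zero $p$ of $q_0$. The local flat metric has cone angle $2\pi\bigl(1 + \tfrac{1}{3}\deg_p(q_0)\bigr)$, and the cube-root frames on the two adjoining saddle connections differ by the monodromy of $\phi_j$ around $p$ (a cyclic permutation) composed with a rotation in $\SO(3)$ by the geometric angle at $p$. The essential point is that, because $c_\gamma$ is a geodesic in the flat metric, the cone-point turning puts the most-expanding direction of $\Hol_s(c_i)$ into general position with respect to the most-contracting direction of $\Hol_s(c_{i+1})$. Concretely, I would develop $c_\gamma$ into affine $3$-space using the natural affine structure defined by the cubic differential: the image is a piecewise affine curve whose successive edges are never parallel, and this nondegeneracy forces the dominant, subdominant, and subsubdominant flags at the Cartan decompositions of successive saddle-connection holonomies to be pairwise transverse.

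Given the alignment, the norm statement \eqref{eqn:general asymptotic holonomy intro} follows for the operator norm: the upper bound is submultiplicativity, and the lower bound is produced by choosing a unit vector in the leading eigendirection of $D_1$ and tracking the bounded frame changes. Any submultiplicative norm is equivalent to the operator norm on a fixed-dimensional matrix algebra up to multiplicative constants, which pass into $o(s^{1/3})$ after taking $\log$, so the formula is independent of the choice of norm. For the full singular-value statement \eqref{eqn:general asymptotic singular values intro}, I would write each $\Hol_s(c_i)$ in $KAK$ form and use the general position of flags to deduce that the Cartan projection of the product is asymptotic to $\sum_i s^{1/3} D_i$ coordinate-wise in the dominant Weyl chamber. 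The main obstacle is precisely this alignment at cone points: products of matrices with diverging singular values are generically badly behaved under singular value decomposition, and only the specific geometry of the flat $|q_0|^{2/3}$ surface together with the cube-root structure can guarantee the required transversality; I expect the verification to rely heavily on the one-third translation surface structure introduced in the paper.
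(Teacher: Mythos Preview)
Your high-level decomposition is right: diagonal asymptotics along each saddle connection, some transition data at each zero, and a transversality statement ensuring the dominant eigenvalues multiply rather than cancel. But the description of the transition at a zero is where the proposal goes off the rails, and this is exactly the heart of the argument.

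The change of frame between incoming and outgoing saddle connections is \emph{not} ``the monodromy of $\phi_j$ (a cyclic permutation) composed with a rotation in $\SO(3)$.'' What actually appears is a product of \emph{unipotent} matrices, one for each Stokes ray emanating from the zero that the arc $\beta_i$ crosses. These unipotents come from the Stokes phenomenon for the flat connection near the zero: one compares the affine sphere for $sq_0$ on a ball around the zero to the model affine sphere with polynomial cubic differential $z^k\,dz^3$ (the Dumas--Wolf polygon), and the latter has explicit Stokes data governing how the \c{T}i\c{t}eica approximation jumps between sectors. So the holonomy factorization is
\[
\Hol_s(\tilde c_\gamma) \sim \prod_i S\,D(\delta_i)^{-1}\,U(\theta_i,\theta_{i+1})^{-1}\,D(\alpha_i)^{-1}\,S^{-1},
\]
with $D(\cdot)$ diagonal and $U(\theta_i,\theta_{i+1})$ a product of unipotents, not an orthogonal matrix. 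A rotation in $\SO(3)$ would automatically give you the transversality you want; a unipotent emphatically does not, and your ``successive edges are never parallel'' picture does not survive this correction.

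The actual transversality statement is that the $(j_i,k_i)$ entry of $U(\theta_i,\theta_{i+1})^{-1}$ is nonzero, where $j_i,k_i$ index the largest eigenvalues of the adjacent diagonal factors. This is proved by interpreting $U$ as a change-of-basis matrix between bases of $\R^3$ whose projectivizations are vertices of inscribed and circumscribed triangles of the regular $(k{+}3)$-gon, and then using the convex geometry of that polygon together with the fact that the geodesic makes an angle $\ge \pi$ at the zero (so the arc crosses at least three Stokes rays). The special cases where a saddle connection lies along a Stokes direction or a Weyl-chamber wall require separate handling (perturbing the path by a small angle $\eta$, and showing the relevant entries are not just nonzero but positive so that ties do not produce cancellation). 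None of this is visible from the ``develop into affine $3$-space'' heuristic you describe, and without it the concatenation step has no content.
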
 

As suggested previously, the main qualitative result is that the leading term of the holonomy, in this regime of a ray, is visible through the local expressions of the Hitchin holomorphic parametrization.  That the right-hand-side of \eqref{eqn:general asymptotic holonomy intro} is expressed as a sum of maxima is consistent with other \enquote{tropical} expressions regarding asymptotics, see for example \cite{Fock98}. 

There are some nuances.  First, we observe the role of the zeros of the holomorphic form $q_0$.  They have no explicit presence in either formula \eqref{eqn:general asymptotic holonomy intro} or \eqref{eqn:general asymptotic singular values intro}. 

Yet, there is a subtlety in that we add the dominant eigenvalue for each saddle connection, so these must get aligned as the geodesic $c_{\gamma}$ transitions from a saddle connection coming into a zero.  Here, two considerations collide: first we must understand the general form of the unipotents that arise as a path crosses the Stokes lines that emanate from a zero \cite{DW}.  Second, we must show that the matrix permutations defined by these unipotents match up the dominant eigenvalues of saddle connections: this occurs because of the geometry of the unipotents that occur in the limits.  The resulting agreement of directions between incoming and outgoing saddle connections is a linchpin of the current work.   It is somewhat remarkable that it holds not only generically but also in the special cases where the saddle connections are in the directions of Stokes lines or walls of Weyl chambers.

We also note that, in contrast to some treatments (e.g. \cite{MSWW16}, \cite{OSWW}), we do not restrict to simple zeroes. In general, this present formula might be seen as an extension of the work of the first author in \cite{loftin2007flat}. (See also the extension of that work by Collier-Li \cite{Collier-Li} on more general cyclic Higgs bundles.) 

Theorem \ref{thm-asymptotic-singular} also has an interpretation in terms of the Weyl-chamber length compactification of the Hitchin component introduced by Parreau (\cite{parreau2012compactification}). Let $\mathfrak{a}^{+}$ be the positive Weyl-chamber in $\mathfrak{sl}(3,\R)$ given by diagonal matrices where entries appear in decreasing order. To a Hitchin representation $\rho:\pi_{1}(\Sigma)\rightarrow \mathrm{SL}(3,\R)$, Parreau associated an $\mathfrak{a}^{+}$-valued length spectrum $\{L_{\rho}(\gamma)\}_{\gamma \in \pi_{1}(\Sigma)}$ by considering the Jordan projection of $\rho(\gamma)$, which represents geometrically the (vector-valued) translation length of each element $\rho(\gamma)$ acting on the symmetric space $\mathrm{SL}(3,\R)/\mathrm{SO}(3)$. The Weyl-chamber length compactification $\overline{\Hit_{3}}^{WL}$ is then defined as the closure of the image of the map
\begin{align*}
	j:\Hit_{3} &\rightarrow \widehat{\Hit_{3}} \times \mathbb{P}(\overline{\mathfrak{a}^{+}}^{\pi_{1}(S)}) \\
			\rho &\mapsto (\rho, L_{\rho}(\cdot)) \ ,
\end{align*}
where $\widehat{\Hit_{3}}$ is the one point compactification of $\Hit_{3}$. Our Theorem \ref{thm-asymptotic-singular} thus implies the following:

\begin{corx} Let $\rho_{s}$ be a diverging family of Hitchin representations corresponding to a ray of cubic differentials $q_{s}=sq_{0}$ in the Labourie-Loftin parameterization of $\Hit_{3}$. Then $\rho_{s}$ converges to a unique point $L_{\infty}$ in $\partial\overline{\Hit_{3}}^{WL}$, which can be computed explicitly as follows: if the geodesic representative in the homotopy class of $\gamma$ is given by a concatenation of saddle connections $c_{1}, \dots, c_{l}$, then
\[
	L_{\infty}(\gamma) = \sum_{i=1}^{l} \left( -2^{\frac{2}{3}}\mathcal{R}e\int_{c_{i}}\phi_{1}, -2^{\frac{2}{3}}\mathcal{R}e\int_{c_{i}}\phi_{2}, -2^{\frac{2}{3}}\mathcal{R}e\int_{c_{i}}\phi_{3} \right)
\]
where $\phi_{1},\phi_{2}$ and $\phi_{3}$ are the cube roots of $q_{0}$ and are numbered on each $c_{i}$ so that the entries of the vector above appear in non-increasing order.
\end{corx}

It is important to note that, since $\|L_{\infty}(\gamma)\|_{\infty}$ is, up to a multiplicative constant, the length of the geodesic representative of $\gamma$ for the flat metric $|q_{0}|^{\frac{2}{3}}$, the infimum $\inf_{\gamma}\|L_{\infty}(\gamma)\|_{\infty}$ is strictly positive, hence $L_{\infty}$ lies in the domain of discontinuity for the action of the mapping class group on $\overline{\Hit_{3}}^{WL}$ (\cite{BIPPmetric}).

A similar result about the asymptotic behavior of length functions for Hitchin representations associated to rays of holomorphic cubic differentials has also been recently found by Reid \cite{Reid}.

Finally, we relate these considerations to the harmonic map $h_s\!: \tilde\Sigma \to X$ defined via the solution to Hitchin's equations. Now, equation \eqref{eqn:general asymptotic singular values intro} gives asymptotics of the singular values, and we recall that the singular values of an element $M \in \SL(3,\R)$ define the distance in the symmetric space $X=\SL(3,\R)/\SO(3)$ between the origin and $[M]$, with asymptotics then defining distance in the asymptotic cone. We then study the geometry of the limiting harmonic map to the asymptotic cone, obtained by taking an $\omega$-limit of $X$, rescaling by the growth of the co-diameter of the image of $h_s$. 
In his thesis (\cite{Semin_thesis}), Semin Kim proves a more general version of the following.

\begin{prop}[\cite{Semin_thesis}] Upon rescaling by $s^{-\frac13}$, the family of harmonic maps $h_s\!: \tilde\Sigma \to X$ converges to a Lipschitz harmonic map $h_\infty\!: \tilde \Sigma \to \Cone_\omega(X).$ The corresponding rescaled family of holonomy representations $\rho_s\!: \pi_1(\Sigma)\to\SL(3,\R)$ converges to a representation $\rho_\infty$ to the isometry group of $\Cone_\omega(X)$. The harmonic map $h_\infty$ is equivariant with respect to $\rho_\infty$. 
\end{prop}

We give a more precise description of the image of the limiting harmonic map $h_{\infty}$:

\begin{thmx} \label{harmonic-map-thm} 
The cubic differential $q_0$ induces a $\frac{1}{3}$-translation surface structure on the image $h_\infty(\tilde\Sigma)\subset\Cone_\omega(X)$, which is compatible with the local geometry of $\Cone_{\omega}(X)$.
\end{thmx}

More precise statements and proofs are contained in Proposition~\ref{prop:lim-harm-map} and Theorems~\ref{thm:limit_outside_zeros} and \ref{thm:image_zeros} below. \\

The structure on $h_\infty(\tilde\Sigma)$ involves natural local models $u_k: B \to (X,d)$, where $B$ is a ball and $u_k$ is a conformal harmonic map to the asymptotic cone $\Cone_\omega(X)$ defined in terms of the cubic differential $\psi_k = z^k dz^3$  and its three cube roots $\phi_1, \phi_2, \phi_3$.  In particular, the image of the punctured ball $B \setminus \{0\}$ comprises $2(k+3)$ flat sectors $W_i$ which meet only consecutively on geodesics and to which $u_k$ is defined by 

 \[
    u_k(x)=\left(-2^{\frac{2}{3}}\Ree\left(\int_{0}^{x} \phi_{1}\right), -2^{\frac{2}{3}}\Ree\left(\int_{0}^{x} \phi_{2}\right), -2^{\frac{2}{3}}\Ree\left(\int_{0}^{x} \phi_{3}\right) \right) 
 \]
 for $x$ in a sector of $B$. We prove in Theorems~\ref{thm:limit_outside_zeros} and \ref{thm:image_zeros} that the harmonic map $h_{\infty}$ has this local structure, where $\psi_k$ is a restriction of $q_0$ to $B$. \\

Although the asymptotic cone $\Cone_{\omega}(X)$, the limiting harmonic map $h_{\infty}$ and the representation $\rho_{\infty}$ depend heavily on the choice of the ultrafilter $\omega$, the geometry of the image $h_{\infty}(\tilde{\Sigma})$ and, in particular, the fact that it carries a $1/3$-translation surface structure induced by the cubic differential $q_{0}$ is independent of all choices. This may be compared to the classical setting of Teichm\"uller space, where equivariant harmonic maps $u_{s}:\tilde{\Sigma}\rightarrow \h^{2}$ are parameterized by a family of quadratic differentials $Q_{s}$. In that case,
if $Q_{s}=sQ_{0}$ is a ray, $u_{s}$ always converges to an equivariant harmonic map $u_{\infty}:\tilde{\Sigma}\rightarrow T$ to a real tree given by projection onto the leaves of the vertical foliation of $Q_{0}$ (\cite{WolfTrees}). The projective class of the vertical foliation only depends on $Q_{0}$ and not on the particular diverging sequence.  Our result suggests that a harmonic map compactification of $\Hit_{3}$ analogous to the harmonic maps compactification of Teichm\"uller space should include in its boundary projective classes of $1/3$-translation surfaces. 
 
  Now, group actions on buildings have arisen in the work of several authors (\cite{parreau2012compactification}, \cite{Burger-Iozzi-Parreau-Pozetti:real-spectrum}) with harmonic maps to these buildings having some prominence (\cite{Burger-Iozzi-Parreau-Pozetti:real-spectrum}, \cite{KNPS_2015}, \cite{Martone-Ouyang-Tamburelli}). Here one might compare the lower rank constructions of surface group actions on real trees in the context of $\SL(2,\R)$ character varieties: see \cite{Bestvina88}, \cite{Wolf_thesis}, \cite{WolfTrees}. In the present context, it is worth focusing on the papers of Katzarkov-Noll-Pandit-Simpson (\cite{KNPS_2015}, 
  \cite{KNPS_constructing}), in which the authors outline an approach to compactifying character varieties of surface group representations in $\SL(d,\C)$.  Of course, the present paper can be seen as demonstrating a part of that program in a real setting. 
  
  Moreover, though, in a companion paper (\cite{Loftin-Tamburelli-Wolf:unique-harmonic}), we prove a uniqueness theorem for conformal equivariant harmonic maps to buildings which applies in our situation.  Thus we find that in settings in which the harmonic maps have an image in the asymptotic cone of $\SL(3,\R)/\SO(3)$, those (equivariant) harmonic maps coincide with the maps described in this paper as endpoints of rays. In particular, those maps would be definable in terms of cubic differentials projectively approximated by the Hitchin differentials for the approximating representations. The results of Theorems~\ref{thm:limit_outside_zeros} and \ref{thm:image_zeros},  and the uniqueness results in \cite{Loftin-Tamburelli-Wolf:unique-harmonic}, then in some sense unify some of the various approaches to asymptotic holonomy of representations and the limiting buildings. \\
 
In the concluding section of this paper, we provide an example of a full compactification in a specific example, that of the $\SL(3,\R)$ Hitchin component of most $(p,q,r)$-triangle groups.
 
Two features of our technique limit the scope of these results but one extends it in an important way beyond existing results.  First, we rely heavily on the fact that the Higgs bundles associated to these representations are cyclic, and the resulting substantial symmetries in the Hitchin system.  Indeed, in the present work, that system is but a single scalar equation.  That is somewhat less of a limitation than it may seem, as some analogous results are available in the case of $\Sp(4,\R)$ and $G_2'$ (\cite{OT-Sp4}, \cite{TW}, \cite{Parker_G2}). Second, here we fix the conformal structure of the domain: considering asymptotics where both the domain Riemann surface and the representation degenerate seems to require an analysis finer than what we present here. Finally, we are greatly aided by the asymptotic cone being two-dimensional, and hence the same dimension as the domain $\Sigma$.  This restricts the flexibility of the limiting harmonic maps. 

On the other hand, in contrast to all previous works on the asymptotics of Hitchin representations along rays of cubic differentials (e.g. \cite{Collier-Li}, \cite{loftin2007flat}, \cite{Mochizuki_asymptotics}), our techniques allow to handle \emph{all} paths on $\Sigma$, including those represented by singular geodesics for the flat metric $|q_{0}|^{\frac{2}{3}}$ through zeros of any order, and give a local geometric description of the limiting harmonic map to the building also around the zeros of the cubic differential. \\

\paragraph{\bf Organization.} In the second section, we define our notation and present some background material.  Section~\ref{sec:asymptotics of Blaschke} is devoted to presenting some required analysis of the Hitchin partial differential equation which governs the harmonic maps. In Section~\ref{sec: comparison between affine spheres}, we analyze the holonomy near a zero of $q_0$: we are interested in the holonomy along a generic saddle connection and along a portion of an arc that links the zero that crosses a Stokes line. Section~\ref{sec:ODEs_par_trans} assembles these partial holonomies of individual saddle connections into a preliminary description of the asymptotic holonomy. Then, in Section~\ref{sec:no branching}, we discuss the phenomenon that the unipotents that describe the transition between incoming and outgoing saddle connections intertwine the dominant eigenvalues. In Section~\ref{sec: main theorem}, we collect all of the ingredients from the previous sections and prove the main result on asymptotic holonomy displayed above. Section~\ref{sec: harmonic map to building} pivots to describe how the endpoint of a ray is a harmonic map to a building, displaying the local structure and induced metric to $q_0$, and showing that the image is \enquote{weakly convex} in the sense of Parreau \cite{AP_A2}. Finally, Section~\ref{sec: triangle} displays a corollary of our work in a very special case:  the compactification of $\Hit_3$ in the case of a triangle group, where we can provide a somewhat complete account of the compactification using our methods.\\

\paragraph{\bf Acknowledgements.} The authors acknowledge support from U.S. National Science
Foundation grants DMS 1107452, 1107263, 1107367 RNMS: GEometric structures And Representation varieties (the GEAR Network). 
In addition, some of this work was supported by NSF grant DMS-1440140 administered by the Mathematical Sciences Research Institute while the authors were in residence during the period August 12-December 13, 2019 for the program Holomorphic Differentials in Mathematics and Physics.  The second author acknowledges support from the U.S. National Science Foundation under grant NSF DMS-2005501. The research of the third author was supported by the National Science Foundation with grant DMS-2005551 and the Simons Foundation. The authors appreciate useful conversations with Emily Dumas, Michael Kapovich, Anne Parreau, Marc Burger, Alessandra Iozzi and Beatrice Pozzetti. 

The authors greatly appreciate the careful reading and thoughtful comments of the referee, whose suggestions added much to the readability and correctness of the final version of the paper; in particular, the authors are grateful for bringing \cite{Semin_thesis} and \cite{Sagman-Smillie:LocalAsymptotics} and their use in Proposition~\ref{prop:lim-harm-map} to their attention.

\section{Background material}\label{sec:background}
The geometry relating cubic differentials and representations in $\Hit_3$ is that of a special sort of surface, a hyperbolic affine sphere $H$, in $\R^3$. We briefly sketch the elements of the theory we will need: for a more detailed account, the reader might consult \cite{Loftin2010SurveyAffineSpheres}, \cite{LoftinMcIntosh2016CubicDifferentials}, \cite{Labourie2007FlatProjectiveCubic}, \cite{Wang1991HyperbolicAffine2Spheres}, \cite{Baraglia2015CyclicHiggsToda}, \cite{BenoistHulin2013CubicDifferentialsFiniteVolume}. 

There are natural affine invariants on a hyperbolic affine sphere, a Riemannian metric called the Blaschke metric $g$, and a cubic differential $q$ which is holomorphic with respect to the conformal structure induced by the Blaschke metric, related by the compatibility relation
\begin{equation} \label{wang-eq-curv}
\kappa(g) = -1 +2 \|q\|^2_g, 
\end{equation} 
where $\kappa(g)$ is the Gauss curvature, and $\|q\|_g$ is the pointwise norm of $q$ with respect to the metric $g$. 
Let $f\!: \mathcal D\to\R^3$ be a parametrization of $H$ conformal with respect to $g$, from a domain $\mathcal D \subset \C$. Then $f$ is transverse to the tangent plane $T_fH$ and we have the following structure equations for the frame $F = (f \, \, f_z \,\, f_{\bar z})$ in $\R^3$, for $z$ a conformal coordinate and $g=e^\phi|dz|^2$:
\begin{equation} \label{has-str-eq}
 F^{-1}dF = \left( \begin{array}{ccc} 0&0& \frac12 e^{\phi} \\ 1 & \partial_z \phi & 0 \\ 0 & q e^{-\phi} & 0 
\end{array} \right)dz +   \left( \begin{array}{ccc} 0& \frac12 e^{\phi} & 0 \\ 0 & 0 & \bar q e^{-\phi} \\ 1 & 0 & \partial_{\bar z} \phi  
\end{array} \right) d\bar z. 
\end{equation} 
Equation \eqref{has-str-eq}  then gives the connection form of the pullback $\na$ of the flat connection on $\R^3$ to the rank-3 bundle $\mathbbm{1} \oplus T_{\C}\mathcal D$ over $\mathcal D$, for $\mathbbm{1}$ the trivial line bundle. Equation \eqref{wang-eq-curv} and $\partial _ {\bar z} q = 0$ ensure $\na$ is flat.
We will be interested in the case where $\mathcal{D}$ is a disk representing the universal cover of a closed Riemann surface, and the fundamental group of the surface acts equivariantly on the flat bundle $(\mathbbm{1} \oplus T_{\C}\mathcal D, \nabla)$. That bundle $\mathbbm{1} \oplus T_{\C}\mathcal D$ then descends to a flat vector bundle $\mathbbm{1} \oplus T_{\C}\Sigma$ over $\Sigma$, whose holonomy gives a Hitchin representation into $\SL(3,\R)$. We refer to this setting in our considerations of Section~\ref{sec:ODEs_par_trans}; see in particular Remark~\ref{rem: strip model}.
%On a compact Riemann surface, the holonomy of $\na$ gives a Hitchin representation into $\SL(3,\R)$. \footnote{Fix this}

Alternately, given a holomorphic cubic differential $q$ over $\Sigma$, we may use this data to define a stable rank-3 Higgs bundle, which then induces an equivariant conformal harmonic map from $\tilde \Sigma$ to the symmetric space $X=\SL(3,\R)/\SO(3)$.  We need not avail ourselves of the details of this construction, as the harmonic map can be constructed directly from the hyperbolic affine sphere. To see this, identify $X$ with the space of all metrics (positive-definite quadratic forms) on $\R^3$ of determinant 1.  The Blaschke lift $h$ (\cite{Labourie_cubic}, \cite{Loftin_compactify}, \cite{BP_harmonic}) at a given point $f$ on $H$ is then the metric on $\R^3$ for which
\begin{itemize}
\item $f$ has norm 1,
\item $f$ is orthogonal to $T_fH$, 
\item the metric restricted to $T_fH$ is the Blaschke metric $g$. 
\end{itemize} 
To relate the Blaschke lift $h$ to the frame $F$, it is useful to use an orthonormal frame for $h$.  For $z=x+iy$, the frame 
$$ \hat F = \left(\begin{array}{ccc} f &  \frac{f_x}{|f_x|_g} & \frac{f_y}{|f_y|_g} \end{array} \right) 
= F \left(\begin{array}{ccc} 1&0&0 \\ 0 & e^{-\phi/2}  & ie^{-\phi/2} \\ 0 & e^{-\phi/2} & -ie^{-\phi/2} \end{array} \right)$$ 
is orthonormal. Thus 
$$ h = (\hat F ^ \top)^{-1} \hat F ^{-1} = (F^\top)^{-1} \left(\begin{array}{ccc} 1&0&0 \\ 0&0& \frac12 e^\phi \\ 0&\frac12 e^\phi & 0\end{array} \right) 
F^{-1}. $$ 

Cheng-Yau (\cite{CY_affinespheres1}, \cite{CY_affinespheres2}) proved that any hyperbolic affine sphere with a complete Blaschke metric is asymptotic to a properly convex cone in $\R^3$, which we take to have vertex at the origin.  Likewise, for every such convex cone, there is a unique hyperbolic affine sphere asymptotic to the cone invariant under special linear automorphisms of the cone and with complete Blaschke metric. By projecting $\R^3\to\RP^2$, we may identify $H$ with a properly convex domain in $\RP^2$.  On a compact Riemann surface $\Sigma$ of genus at least 2 equipped with a cubic differential $q$, there is a unique solution to Equation \eqref{wang-eq-background} below (this is the global version of Equation \eqref{wang-eq-curv} above). Thus the universal cover $\tilde\Sigma$ is identified with a convex domain $\Omega$ in $\RP^2$, and $\Sigma$ itself admits a quotient of $\Omega$ by projective automorphisms. In other words, we have induced a convex $\RP^2$ structure on $\Sigma$.  Choi-Goldman (\cite{Goldman_RP2},\cite{Choi_Goldman}) show that convex $\RP^2$ structures are equivalent to Hitchin representations. 

Dumas and the third author \cite{DW} address the case of polynomial cubic differentials on $\C$, and show there is a unique complete Blaschke metric on that plane.  The convex domain in this case is a convex polygon with $d+3$ sides, for $d$ the degree of the polynomial.  Indeed polynomial cubic differentials on $\C$ are equivalent to convex polygons, up to appropriate equivalences. In this work, we are primarily interested in cubic differentials of the form $z^d\,dz^3$, which corresponds to the regular convex polygon of $d+3$ sides.  In general, the analysis of the boundary of the polygon follows by comparison to the simpler geometry of inscribed and circumscribed triangles around each vertex and edge.

 \begin{example}\label{ex: Titeica}
 	The triangle case (for a constant cubic differential on $\C$ and in which the Blaschke metric is flat) can be worked out explicitly and goes back to \c{T}i\c{t}eica (\cite{Titeica}).  We see that this is a fundamental model for us, and so we describe some of its features. In terms of a natural local coordinate in which the cubic differential $q = dw^3$, for $w=re^{i\theta}$, there are \enquote{Stokes} directions for rays $\theta = \frac\pi6 + \frac\pi3 k$ for $k$ an integer. In the sectors bounded by $\theta = \frac\pi3 + \frac{2\pi}{3} k$, the rays limit on a vertex of the triangle; the rays parallel to those angles fill out the sides of the polygon.
 	 	 \end{example}
 	For a general convex polygon, in each sector in between the Stokes directions, the affine sphere is well approximated by an appropriate \c{T}i\c{t}eica surface.
Upon crossing a Stokes ray, the approximating 
\c{T}i\c{t}eica surface changes by the action of a unipotent transformation determined by the geometry of the convex polygon: the unique unipotent transformation in $\R^3$ whose projective action on $\RP^2$ transforms a given inscribed (circumscribed) triangle in the polygon to the subsequent circumscribed (inscribed) triangle by moving a single vertex. 

We will also encounter other special directions in $\C$, which represent the walls of Weyl chambers at $\theta = \frac\pi3 k$, upon identifying $\C$ with the maximum torus of the Lie algebra of $\SL(3,\R)$.

\section{Asymptotics of the Blaschke metric}\label{sec:asymptotics of Blaschke}

Consider a ray of cubic differentials $q_s = sq_0$ on a fixed closed Riemann surface $\Sigma=(S,J)$ with conformal hyperbolic metric $\sigma$. Let $g_s = e^{\mu_s}\sigma$ be the Blaschke metric on the associated affine sphere. The real-valued functions $\mu_s$ are solutions of Wang's equation
\begin{equation} \label{wang-eq-background}
\Delta_\sigma \mu_s = 2e^{\mu_s} - 4 e^{-2\mu_s} \frac{|q_s|^2}{\sigma^3} + 2 \kappa(\sigma).
\end{equation} 
Near each zero of $q$, Nie has studied rescaled limits of the associated convex $\RP^2$ structure, which converge to a regular polygon \cite{Nie22}.  Our approach focuses on precise estimates comparing the corresponding affine spheres. 

\subsection{Asymptotics far from the zeros}
We compare the Blaschke metrics $g_{s}$ with the flat metric with cone singularities $|q_{s}|^{\frac{2}{3}}$. The estimates in this subsection are well-known and indeed hold in far greater generality (cf. \cite{Mochizuki_asymptotics}), but we restrict here to our present setting, where some of the estimates can be more explicit.

\begin{lemma}[\cite{Loftin_compactify}, \cite{DW}]\label{gs-compare-qs} The Blaschke metric $g_{s}$ satisfies
$g_s> 2^{\frac13} |q_s|^{\frac23}.$
\end{lemma}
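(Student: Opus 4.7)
The plan is to reformulate the inequality as a statement about the sign of $\kappa(g_s)$ and then apply a maximum principle to a scalar quantity derived from the Wang equation \eqref{wang-eq-background}.

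First, I observe that $g_s>2^{\frac13}|q_s|^{\frac23}$ is equivalent to the pointwise bound $\|q_s\|^2_{g_s}<\tfrac12$, which, by the structure equation \eqref{wang-eq-curv}, is in turn equivalent to $\kappa(g_s)<0$. At a zero of $q_s$ the desired inequality is trivial, so it suffices to work on $\Sigma\setminus Z(q_s)$ and show $\|q_s\|^2_{g_s}<\tfrac12$ there.

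Next, I compute $\Delta_\sigma f$ for
\[
f \;=\; \log\|q_s\|^2_{g_s} \;=\; \log|q_s|^2 - 3\mu_s - 3\log\sigma.
\]
Because $q_s$ is holomorphic, $\log|q_s|^2$ is harmonic on $\Sigma\setminus Z(q_s)$; using the identity $\Delta_\sigma\log\sigma=-2\kappa(\sigma)$ together with \eqref{wang-eq-background}, the two $\kappa(\sigma)$ contributions cancel, and a short computation yields
\[
\Delta_\sigma f \;=\; 6 e^{\mu_s}\bigl(2\|q_s\|^2_{g_s}-1\bigr) \;=\; 6e^{\mu_s}\kappa(g_s).
\]
Thus the sign of $\Delta_\sigma f$ at any point is exactly the sign of $\kappa(g_s)$.

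Since $f\to -\infty$ at each zero of $q_s$ and $\Sigma$ is compact, $f$ attains its supremum at some $p_0\in \Sigma\setminus Z(q_s)$. At $p_0$ one has $\Delta_\sigma f\leq 0$, whence $\|q_s\|^2_{g_s}(p_0)\leq \tfrac12$, and so $\|q_s\|^2_{g_s}\leq \tfrac12$ everywhere on $\Sigma$. To upgrade this to a strict inequality I would invoke the strong maximum principle: if the supremum were exactly $\tfrac12$, then in a neighborhood of $p_0$ we would have $f\leq -\log 2$, hence $\Delta_\sigma f\leq 0$, so $f$ is superharmonic on that neighborhood while attaining an interior maximum; the strong maximum principle then forces $f$ to be locally constant, and connectedness of $\Sigma\setminus Z(q_s)$ (which holds since $Z(q_s)$ is a finite subset of a surface) propagates this to $f\equiv -\log 2$ globally, contradicting the blow-down $f\to -\infty$ at the zeros of $q_s$. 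The one delicate point I anticipate is exactly this last step, where strict inequality is obtained from the weak one via the strong maximum principle; the rest of the argument is a direct computation combined with a standard maximum-principle application.
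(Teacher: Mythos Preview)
The paper does not give its own proof of this lemma; it is simply cited from \cite{Loftin_compactify} and \cite{DW}. Your approach---recasting the inequality as $\|q_s\|^2_{g_s}<\tfrac12$ (equivalently $\kappa(g_s)<0$) and applying a maximum principle to $f=\log\|q_s\|^2_{g_s}$---is the standard argument behind those references, and your computation $\Delta_\sigma f = 6e^{\mu_s}\bigl(2\|q_s\|^2_{g_s}-1\bigr)$ is correct. This is essentially the same as working with the quantity $\mathcal F_s$ the paper introduces immediately afterward, since $f=-3\mathcal F_s-\log 2$.

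There is, however, a genuine slip in the last step, the one you flagged as delicate. You argue that near $p_0$ one has $\Delta_\sigma f\le 0$, so $f$ is superharmonic, attains an interior maximum, and hence is locally constant by ``the strong maximum principle.'' But superharmonic functions obey the strong \emph{minimum} principle, not the maximum principle; a superharmonic function may well have a strict interior maximum (take $f=-|x|^2$). The correct way to close the argument is to linearize the nonlinearity: set $u=f+\log 2\le 0$ with $u(p_0)=0$, and rewrite
\[
\Delta_\sigma u \;=\; 6e^{\mu_s}\bigl(e^{u}-1\bigr)\;=\;c(x)\,u,\qquad c(x)=6e^{\mu_s}\!\int_0^1 e^{tu(x)}\,dt>0.
\]
Then $(\Delta_\sigma - c)u=0$ with $c\ge 0$, and Hopf's strong maximum principle for such operators (a solution with a nonnegative interior maximum is constant) forces $u\equiv 0$ on the connected set $\Sigma\setminus Z(q_s)$, contradicting $u\to-\infty$ at the zeros of $q_s$ (which exist since $\deg K_\Sigma^3=6g-6>0$). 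With this repair your proof goes through.
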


\begin{lemma}[\cite{OT_Blaschke},\cite{Mochizuki_asymptotics} Prop. 2.1] \label{norm-qs-bounds} The area of the Blaschke metric satisfies
$$2^{\frac13} \|q_s\| \le \Area(S,g_s) \le 2^{\frac13} \|q_s\| + 2\pi |\chi(S)| \, $$ where $\|q\| = \int_S |q|^{\frac23}$. 
\end{lemma}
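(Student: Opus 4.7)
The plan is to handle the two bounds separately: the lower bound is essentially a direct integration of Lemma \ref{gs-compare-qs}, while the upper bound comes from integrating the Wang compatibility relation against the Blaschke area form and invoking Gauss--Bonnet.

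For the lower bound, Lemma \ref{gs-compare-qs} gives the pointwise inequality $g_s > 2^{\frac13}|q_s|^{\frac23}$ of (degenerate) Riemannian metrics on $S$. The associated area forms obey the same inequality, so integration over $S$ immediately produces $\Area(S,g_s) \ge 2^{\frac13}\|q_s\|$.

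For the upper bound, the starting point is the compatibility relation $\kappa(g_s) = -1 + 2\|q_s\|_{g_s}^2$ from \eqref{wang-eq-curv}. Integrating both sides against $dA_{g_s}$ and applying the Gauss--Bonnet theorem $\int_S \kappa(g_s)\,dA_{g_s} = 2\pi\chi(S)$ yields the exact identity
\begin{equation*}
\Area(S,g_s) \;=\; 2\int_S \|q_s\|_{g_s}^2 \, dA_{g_s} + 2\pi|\chi(S)|,
\end{equation*}
where we used $\chi(S)<0$. The upper bound thus reduces to showing $2\int_S \|q_s\|_{g_s}^2\, dA_{g_s} \le 2^{\frac13}\|q_s\|$. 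Writing $g_s = G|dz|^2$ in a local conformal chart, the integrand equals $2|q_s|^2/G^2\, dx\,dy$, and Lemma \ref{gs-compare-qs} gives $G^2 \ge 2^{\frac23}|q_s|^{\frac43}$, which yields the pointwise bound $2|q_s|^2/G^2 \le 2^{\frac13}|q_s|^{\frac23}$. Integrating finishes the upper bound.

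There is no substantial obstacle: the main geometric input, Lemma \ref{gs-compare-qs}, is already in hand, and Gauss--Bonnet supplies the topological correction term exactly. The only minor care required is verifying that the pointwise norm of $q_s$ in the Blaschke metric and the intrinsic measure $|q_s|^{\frac23}$ transform consistently across conformal charts (the product $\|q_s\|_{g_s}^2\, dA_{g_s}$ is coordinate-independent, as is $|q_s|^{\frac23}\,dx\,dy$), and that the estimates extend across the zero set of $q_s$, where both integrands remain bounded by the locally integrable function $|q_s|^{\frac23}$.
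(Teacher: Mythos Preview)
Your argument is correct. The paper does not actually supply a proof of this lemma; it is simply quoted from \cite{OT_Blaschke}. The proof you give---integrating the pointwise comparison of Lemma~\ref{gs-compare-qs} for the lower bound, and combining Gauss--Bonnet applied to \eqref{wang-eq-curv} with that same pointwise comparison for the upper bound---is precisely the standard argument found in the cited reference.
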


We consider now the quantity
$$ \F_s = \mu_s - \frac13 \log \left( \frac{2|q_s|^2}{\sigma^3} \right) $$
in order to compare $g_{s}$ and $|q_{s}|^{\frac{2}{3}}$ outside the zeros of $q_{s}$. Outside the zeros of $q_0$, the function $\F_s$ satisfies the PDE
$$ \Delta_{|q_0|^{\frac23}} \F_s = 2^{\frac43} s^{\frac23} ( e ^{\F_s} - e^{-2 \F _s }).$$ 
By Lemma \ref{gs-compare-qs}, $\F_s>0$ and $\Delta_\sigma \F_s>0$. Hence $\F_s$ is subharmonic. 
\begin{lemma}[Coarse bound on $\F_s$] \label{coarse-bound-Fs}
Let $p\in S$ and let $r_0$ be the radius of a ball around $p$ for the flat metric $|q_0|^{\frac23}$ which does not contain any zeros of $q_0$. Then 
$$ \F_s(p) \le \log \left( \frac{\Area(S,g_s)}{2^{\frac13}\pi s^{\frac23} r_0^2} \right).$$
\end{lemma}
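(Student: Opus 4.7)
The plan is to use the subharmonicity of $\F_s$, which has just been established above the lemma, together with a mean value inequality, applied to the exponential rather than to $\F_s$ itself. First I would unwind the definition of $\F_s$: since
$$g_s = e^{\mu_s}\sigma = e^{\F_s}\cdot\left(\frac{2|q_s|^2}{\sigma^3}\right)^{\!1/3}\!\sigma = e^{\F_s}\cdot 2^{1/3}|q_s|^{2/3},$$
and $|q_s|^{2/3} = s^{2/3}|q_0|^{2/3}$, the area forms are related by $dA_{g_s} = 2^{1/3}s^{2/3} e^{\F_s}\,dA_{|q_0|^{2/3}}$. This reformulation shows that integrating $e^{\F_s}$ against the flat metric essentially produces the Blaschke area, which is controlled globally by Lemma \ref{norm-qs-bounds}.

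Next I would upgrade the subharmonicity of $\F_s$ to subharmonicity of $e^{\F_s}$ on the zero-free ball $B(p,r_0)$: since subharmonicity is conformally invariant in dimension two, $\F_s$ is subharmonic with respect to the flat metric $|q_0|^{2/3}$ as well (which is consistent with the PDE $\Delta_{|q_0|^{2/3}}\F_s = 2^{4/3}s^{2/3}(e^{\F_s}-e^{-2\F_s})>0$ noted in the text), and composing a subharmonic function with a convex increasing function preserves subharmonicity. The standard sub-mean value inequality on the Euclidean disk of radius $r_0$ then gives
$$e^{\F_s(p)} \le \frac{1}{\pi r_0^2}\int_{B(p,r_0)} e^{\F_s}\,dA_{|q_0|^{2/3}} = \frac{\Area(B(p,r_0),g_s)}{2^{1/3}\pi s^{2/3} r_0^2}.$$

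Bounding the Blaschke area of the ball by the total Blaschke area of $S$ and taking the logarithm yields the asserted inequality. I do not expect any genuine obstacle: the argument is a two-line application of the mean value inequality once one identifies the correct observable. The one conceptual point worth highlighting is the choice to apply the mean value inequality to $e^{\F_s}$ (rather than to $\F_s$ itself), since this is what turns the resulting integral into the Blaschke area, making both Lemma \ref{gs-compare-qs} and Lemma \ref{norm-qs-bounds} available and producing the sharp normalization $2^{1/3}\pi s^{2/3} r_0^2$ in the denominator.
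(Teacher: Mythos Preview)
Your proof is correct and essentially the same as the paper's. The paper applies the sub-mean value inequality to $\F_s$ and then Jensen's inequality to pass to $e^{\F_s}$, whereas you first note that $e^{\F_s}$ is itself subharmonic (convex increasing function composed with subharmonic) and apply the mean value inequality directly; these two routes yield the identical bound $e^{\F_s(p)}\le \fint_B e^{\F_s}\,dA_{|q_0|^{2/3}}$, after which both arguments identify the integrand with $2^{-1/3}s^{-2/3}\,dA_{g_s}$ and bound by the total Blaschke area.
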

\begin{proof}
The ball $B$ of radius $s^{\frac13}r_0$ for the flat metric $q_s$ does not contain any zeros. By subharmonicity of $\F_s$ and Jensen's Inequality, we have 
$$ e^{\F_s} \le e^{\fint_B \F_s dA_{q_s}} \le \fint_B e^{\F_s} dA_{q_s} = 2^{-\frac13} \fint_B e^{\mu_s} dA_\sigma \le \frac{\Area(S,g_s)}{2^{\frac13} \pi s^{\frac23} r^2_0}. $$ 
\end{proof}

\begin{lemma}[Error decay]\label{error-decay}
 Let $p\in S$ be a point at distance $r_0$ from the zeros of $q_0$ for the singular flat metric $|q_{0}|^{\frac{2}{3}}$. Then for any $\delta>1$, there is a $D>0$ so that 
$$ \F_s(p) \le D s^{\frac16} e^{-\sqrt3 \cdot 2^{\frac23} a_s s^{\frac13} r_0/\delta},$$ with 
$a_s\to 1$ as $s\to +\infty$. 
\end{lemma}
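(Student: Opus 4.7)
\medskip\noindent\textbf{Proof plan.}
The approach is a two-step barrier argument using modified Bessel functions on Euclidean disks. With $g(v) := (e^v - e^{-2v})/v$ and $g(0)=3$, the PDE for $\F_s$ reads $\Delta_{|q_0|^{2/3}} \F_s = 2^{4/3} s^{2/3}\, g(\F_s)\, \F_s$. Combining Lemma~\ref{coarse-bound-Fs} with the area estimate in Lemma~\ref{norm-qs-bounds}, $\F_s$ is bounded uniformly in $s$ on any region $\{r \geq \tau\}$, where $r$ denotes $|q_0|^{2/3}$-distance to the zeros of $q_0$; on such a region one has $g(\F_s) \geq g_*(\tau) > 0$, so $\F_s$ obeys the linear differential inequality $\Delta_{|q_0|^{2/3}} \F_s \geq k^2\,\F_s$ with $k^2 = 2^{4/3} g_* s^{2/3}$.

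For a point $q$ with $r(q) \geq \tau$, the exponential map at $q$ in the flat metric $|q_0|^{2/3}$ is a local isometry from any tangent disk $D_\rho \subset T_q S$ of radius $\rho < r(q)$, and the lift $\tilde\F_s$ satisfies the same PDE there. The radially symmetric function $w(v) = M\, I_0(k|v|)/I_0(k\rho)$ solves $\Delta w = k^2 w$ with $w|_{\partial D_\rho} = M$, and the weak maximum principle for $\Delta - k^2$ applied to $\tilde\F_s - w$ gives $\F_s(q) \leq M/I_0(k\rho)$. Because $I_0(x) \sim e^x/\sqrt{2\pi x}$ as $x \to \infty$, this is of order $s^{1/6} e^{-k\rho}$. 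Run once with the crude value $k_0 = 2^{2/3}\sqrt{g_*}\,s^{1/3}$ and any fixed $\rho \in (0,\tau)$, this first pass shows that $\sup_{r \geq \tau}\F_s =: \epsilon_s \to 0$ as $s \to \infty$.

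The bootstrap comes next. Since $g(v) \geq 3(1 - Cv)$ for small $v \geq 0$, on $\{r \geq \tau\}$ the inequality sharpens to $\Delta_{|q_0|^{2/3}} \F_s \geq 3(1 - C\epsilon_s)\cdot 2^{4/3} s^{2/3}\,\F_s$, and the Bessel barrier now runs with the nearly optimal $k_s = \sqrt 3 \cdot 2^{2/3}\sqrt{1 - C\epsilon_s}\,s^{1/3}$. Taking $\tau := r_0(1 - 1/\delta)$ and the barrier radius $\rho := r_0/\delta$, the triangle inequality places the boundary of the disk in $\{r \geq \tau\}$, where the boundary values are controlled by $\epsilon_s$, and the second barrier estimate yields
\[
 \F_s(p) \leq \frac{\epsilon_s}{I_0(k_s r_0/\delta)} \leq D\, s^{1/6}\, e^{-\sqrt 3 \cdot 2^{2/3} a_s s^{1/3} r_0/\delta},
\]
with $a_s = \sqrt{1 - C\epsilon_s} \to 1$, matching the claim.

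The main obstacle is the sharpness of the constant $\sqrt 3$: because $g(v) < 3$ for every $v > 0$, a single-step barrier can only produce a strictly smaller exponential rate, and the two-stage bootstrap is therefore essential. A secondary technical point is that $\F_s$ blows up at the zeros of $q_0$, which forces the barrier disk to be pulled in from $B_{r_0}(p)$ and accounts for the factor $1/\delta$ loss.
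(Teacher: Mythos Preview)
Your proof is correct and follows essentially the same two-step Bessel-barrier bootstrap as the paper: linearize the PDE $\Delta_{|q_0|^{2/3}}\F_s = 2^{4/3}s^{2/3}(e^{\F_s}-e^{-2\F_s})$ on a flat disk avoiding the zeros, compare with the radial solution $I_0(kr)/I_0(k\rho)$, then feed the resulting decay back in to push the rate toward $\sqrt3\cdot 2^{2/3}$. The only cosmetic difference is that the paper factors the nonlinearity as $2e^{-\F_s/2}\sinh(\tfrac32\F_s)\ge 3e^{-\F_s/2}\F_s$ and bootstraps the lower bound on $e^{-\F_s/2}$, whereas you write it as $g(\F_s)\F_s$ and bootstrap the lower bound on $g$; the structure, the Bessel comparison, and the appearance of the factor $1/\delta$ are identical.
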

\begin{proof}
Consider the ball centered at $p$ of radius $s^{\frac13}r_0/\delta$ for the flat metric $q_{s}$. Since this ball does not contain any zeros of $q_0$, we may choose coordinates on the ball so that $q_0=dz^3$ and $p$ is at $z=0$. Then $\F_s$ on this ball satisfies 
$$ \Delta \F_s = 2^{\frac43} s^{\frac23} \cdot 2 e^{-\F_s/2} \sinh\left(\frac32 \F_s\right).$$
By Lemma \ref{coarse-bound-Fs} and Lemma \ref{norm-qs-bounds}, the function $e^{-\F_s/2}$ is uniformly bounded below by a constant $c>0$. Then 
$$ \Delta \F_s \ge 3 \cdot 2^{\frac43} s^{\frac23} c \F_s.$$
Similarly, the function $\F_s$ is bounded above by a constant $A = A(\delta)>0$ on the boundary of the ball. Let $\eta$ be the solution of the system 
$$ \left \{ \begin{array}{l} \Delta \eta = 3\cdot 2^{\frac43} s^{\frac23} c \eta, \\
\eta|_\partial = A. \end{array} \right.
$$
We know $\eta(r) = A \frac{I_0(\sqrt 3 \cdot 2^{\frac23} \sqrt c \cdot s^{\frac13} r)} { I_0 (\sqrt 3 \cdot 2^{\frac23} \sqrt c \cdot s^{\frac13} r_0/\delta)}$, where $I_0(\kappa r)$ is the only radial solution of 
$$ \left \{ \begin{array}{l} \Delta I_0 = \kappa^2 I_0, \\
I_0|_\partial = 1. \end{array} \right.
$$
It is well-known (\cite{handbook_formulas}) that $I_0(\kappa x)$ is the Bessel function of the first kind and has asymptotic behavior $I_{0}(\kappa x) \sim \frac{e^{\kappa x}}{\sqrt x}$ as $x\to + \infty$. By the maximum principle,
$$ \F_s(0) \le \eta(0) = \frac A{I_0(\sqrt3 \cdot 2^{\frac13} \sqrt c \cdot s^{\frac13} r_0/\delta)} \le D s^{\frac16} e^{-\sqrt3 \cdot 2^{\frac23} \sqrt c \cdot s^{\frac13} r_0/\delta} . $$
This implies that $\F_s$ decays exponentially outside the zeros of $q_0$. Now, remember that the constant $c$ comes from the bound on $e^{-\F_s/2}$. From the decay of $\F_s$, we can improve this bound to $e^{-\F_s/2} \ge a_s$ with $a_s\to 1$ as $s\to+\infty$, and the lemma is proved.
\end{proof} 

\begin{notation} \label{defn: m(s)}
For simplicity, we denote by $m(s)$ the exponent appearing in Lemma \ref{error-decay}, i.e.
$$ m(s) = m(s,r_0,\delta) = \sqrt3 \cdot 2^{\frac23} a_s s^{\frac13} r_0/\delta.$$
\end{notation}

\subsection{Estimates around a zero}\label{sec: estimates around a zero} We now move to the study of the asymptotic behavior of $\mu_{s}$ around a zero of the Pick differential. We denote by $h_s = e^{\nu_s} |dz|^2$ the Blaschke metric of the affine sphere with polynomial cubic differential $q_s = s z^kdz^3$ on $\C$. We want to compare $\mu_s$ and $\nu_s$ on the ball $B  = \{|z|<\epsilon\}$. We rewrite the Blaschke metric $g_s = e^{\mu_s}\sigma$ with respect to the background flat metric $|dz|^2$ on $B$: $g_s = e^{\phi_s} |dz|^2$, where $\phi_s$ is a solution of the PDE (only defined on the closure of $B$)
\begin{equation}\label{eq: Delta phi-s}
	\Delta \phi_s = 2e^{\phi_s} - 4e^{-2\phi_s} |q_s|^2.
\end{equation}
Thus $\nu_s$ is a solution of the same equation as $\phi_s$, but with different boundary values.

\begin{lemma} \label{phi-nu-close}
On $\partial B$, we have $\phi_s - \nu_s = O(s^{\frac16} e^{-m(s)})$ as $s\to+\infty$.
\end{lemma}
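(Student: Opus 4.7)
The plan is to show separately that $\phi_s$ and $\nu_s$ are each exponentially close on $\partial B$ to the common flat cone model $\tfrac{1}{3}\log(2|q_s|^2)$, and then to conclude by the triangle inequality. Writing both metrics with respect to the Euclidean background on $B$ is the natural gauge: $\phi_s$ and $\nu_s$ satisfy the same PDE \eqref{eq: Delta phi-s} and differ only in their boundary conditions---those of $\phi_s$ inherited from the compact surface $\Sigma$, and those of $\nu_s$ from the complete Blaschke metric of the polynomial model on all of $\C$.

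For the compact-surface side, I first unwind $\sigma = e^\psi|dz|^2$ to see that $\phi_s - \tfrac{1}{3}\log(2|q_s|^2) = \F_s$ on $B$, and then apply Lemma~\ref{error-decay}. Choosing $\epsilon$ so that $0$ is the only zero of $q_0$ in a slightly larger ball, every $p\in\partial B$ lies at $q_0$-distance $r_0 = \tfrac{3}{k+3}\epsilon^{(k+3)/3}$ from that zero, yielding $|\phi_s(p) - \tfrac{1}{3}\log(2|q_s(p)|^2)| \le D s^{\frac{1}{6}} e^{-m(s)}$. For the model side, I repeat the proof of Lemma~\ref{error-decay} for $\F^\nu_s := \nu_s - \tfrac{1}{3}\log(2|q_s|^2)$: this quantity is positive (by the polynomial analogue of Lemma~\ref{gs-compare-qs} established in \cite{DW}), subharmonic, and satisfies the identical PDE $\Delta_{|q_s|^{\frac{2}{3}}} \F^\nu_s = 2^{\frac{4}{3}} s^{\frac{2}{3}}(e^{\F^\nu_s} - e^{-2\F^\nu_s})$ on $\C\setminus\{0\}$, so the same Bessel-function maximum-principle argument delivers the same decay estimate once a coarse upper bound analogous to Lemma~\ref{coarse-bound-Fs} is available on a definite annulus around the zero that contains $\partial B$.

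The main obstacle is precisely that coarse bound, since the proof of Lemma~\ref{coarse-bound-Fs} used the finite area $\Area(S,g_s)$, which is unavailable on $\C$. I would circumvent this via the exact scaling $\nu_s(z) = \nu_1(s^{1/(k+3)}z) + \tfrac{2}{k+3}\log s$, which rewrites $\F^\nu_s(z) = G(w) := \nu_1(w) - \tfrac{1}{3}\log(2|w|^{2k})$ with $w = s^{1/(k+3)}z$; boundedness of the $s$-independent function $G$ outside any neighborhood of $0$ then follows from the description of the polynomial-model affine sphere in \cite{DW} (alternatively from a direct Gauss-Bonnet computation on a bounded annulus). Feeding this into the Bessel comparison gives $\F^\nu_s(p) \le D s^{\frac{1}{6}} e^{-m(s)}$ on $\partial B$, and the triangle inequality $|\phi_s - \nu_s| \le |\phi_s - \tfrac{1}{3}\log(2|q_s|^2)| + |\nu_s - \tfrac{1}{3}\log(2|q_s|^2)|$ then finishes the proof.
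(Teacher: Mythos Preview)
Your proposal is correct and follows essentially the same route as the paper: both compare $\phi_s$ and $\nu_s$ separately to the flat cone model $\tfrac13\log(2|q_s|^2)$ via the triangle inequality, using Lemma~\ref{error-decay} for $\phi_s$ and the scaling $w=s^{1/(k+3)}z$ to reduce the $\nu_s$ estimate to the fixed polynomial model. The only difference is that the paper cites the needed decay for the model side directly from \cite[Theorem~5.7]{DW} (yielding the slightly sharper $O(s^{-1/6}e^{-m(s)})$ there), whereas you reprove it by rerunning the Bessel comparison with the boundedness of $G$ from \cite{DW} as your coarse input; both are valid and lead to the same conclusion.
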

\begin{proof}
We know from Lemma~\ref{error-decay} that 
$$ \phi_s | _{\partial B} = \mu_s + \log (\sigma) = \frac13 \, \log(2 s^2 \epsilon^{2k}) + O(s^{\frac16} e^{-m(s)})$$ for $r_0 = \frac{3}{k+3}\epsilon^{\frac{k+3}3}$, where we recall that $\sigma$ is the hyperbolic metric on the fixed Riemann surface $\Sigma$. Thus it is sufficient to show that 
$$ \nu_s = \frac13 \, \log (2s^2 \epsilon^{2k}) + o( s^{\frac16}e^{-m(s)}).$$
Changing complex coordinates to $w = s^{\frac1{k+3}} z$, the ball $B$ can be rewritten as $B = \{ |w| < s^{\frac1{k+3}} \epsilon\}$. In these coordinates, $e^{\nu_s} |dz|^2 = e^{\psi_s} |dw|^2$ with $\psi_s = \nu_s - \frac2{k+3} \log(s)$, and $q_s = w^k dw^3$. The estimates of (\cite[Theorem 5.7]{DW}) then tell us that
$$ \psi_s | _{\partial B} = \frac13 \, \log (2s^{\frac{2k}{k+3}} \epsilon^{2k}) + O \left(\frac{e^{-m(s)}}{s^{\frac16}}\right).$$
Hence, on $\partial B$,
\begin{eqnarray*}
\nu_s&=& \psi_s + \frac2{k+3} \, \log(s) \\
&=& \frac13 \, \log(2s^{\frac{2k}{k+3}} \epsilon^{2k}) + \frac13 \, \log(s^{\frac6{k+3}}) + O\left(\frac{e^{-m(s)}}{s^{\frac16}}\right) \\
&=& \frac 13 \, \log (2s^2 \epsilon^{2k}) + O \left(\frac{e^{-m(s)}}{ s^{\frac16}}\right). 
\end{eqnarray*}
\end{proof}

\begin{lemma}\label{lm:decay_ball}
On the ball $B$, we have $\phi_s - \nu_s = O(s^{\frac16} e^{-m(s)})$ as $s\to+\infty$. 
\end{lemma}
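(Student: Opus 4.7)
The plan is to reduce to the weak maximum principle applied to the difference $u_s = \phi_s - \nu_s$. Since $\phi_s$ and $\nu_s$ satisfy the same semilinear equation \eqref{eq: Delta phi-s} on $B$, subtraction yields
\[
\Delta u_s \;=\; 2\bigl(e^{\phi_s}-e^{\nu_s}\bigr) \;-\; 4|q_s|^2\bigl(e^{-2\phi_s}-e^{-2\nu_s}\bigr).
\]
The first step is to write this as a linear equation for $u_s$ with a non-negative zeroth-order coefficient. Using the fundamental theorem of calculus,
\[
e^{\phi_s}-e^{\nu_s} \;=\; u_s\int_0^1 e^{\nu_s+t u_s}\,dt \;=:\; A_s(z)\,u_s,
\qquad A_s>0,
\]
and similarly
\[
e^{-2\phi_s}-e^{-2\nu_s} \;=\; -u_s\cdot 2\int_0^1 e^{-2\nu_s - 2t u_s}\,dt \;=:\; -B_s(z)\,u_s,
\qquad B_s>0.
\]
Therefore
\[
\Delta u_s \;=\; c_s(z)\,u_s, \qquad c_s(z)=2A_s(z)+4|q_s|^2 B_s(z)\,\ge\,0.
\]

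The second step is to invoke the weak maximum principle for the linear operator $L = \Delta - c_s$ with $c_s\ge 0$ on the bounded domain $B$: a solution of $Lu_s=0$ satisfies $\sup_B u_s \le \sup_{\partial B} u_s^+$ and, applied to $-u_s$, $\sup_B(-u_s)\le \sup_{\partial B} u_s^-$, so
\[
\max_{\overline B} |u_s| \;\le\; \max_{\partial B}|u_s|.
\]
Combining this with Lemma~\ref{phi-nu-close} gives the claimed bound $u_s = O(s^{1/6}e^{-m(s)})$ uniformly on $B$.

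The main (very mild) point to check is that the coefficient $c_s$ is a well-defined non-negative function, which follows once we know $\phi_s$ and $\nu_s$ are finite, real, and continuous on $\overline B$; the maximum principle itself requires no uniform control on $c_s$ as $s\to+\infty$, only that $c_s\ge 0$. No more delicate barrier construction is needed here because the boundary data themselves already enjoy the exponentially small bound from Lemma~\ref{phi-nu-close}; the maximum principle simply propagates this estimate into the interior of $B$.
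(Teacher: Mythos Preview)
Your proof is correct and follows essentially the same approach as the paper: both compute the PDE satisfied by $\phi_s-\nu_s$, observe that the structure of the nonlinearity forces interior extrema to be controlled by the boundary values, and then invoke Lemma~\ref{phi-nu-close}. The only cosmetic difference is that you linearize explicitly via the fundamental theorem of calculus to obtain $\Delta u_s = c_s u_s$ with $c_s\ge 0$, whereas the paper leaves the equation in the form $\Delta_{h_s}\eta_s = 2(e^{\eta_s}-1) - 4\frac{|q_s|^2}{e^{3\nu_s}}(e^{-2\eta_s}-1)$ and applies the maximum principle directly to that.
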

\begin{proof}
Define $\eta_s = \phi_s - \nu_s$. It satisfies the PDE
$$ \Delta \eta_s = \Delta \phi_s - \Delta \nu_s = 2 e^{\phi_s} - 4 e^{-2\phi_s} |q_s|^2 -2e^{\nu_s} + 4e^{-2\nu_s}|q_s|^2$$ on $B$.  Dividing by $e^{\nu_s}$, we find, upon setting $h_s=e^{\nu_s}|dz|^2$, that
\begin{eqnarray*}
\Delta_{h_s} \eta_s &=& e^{-\nu_s} \Delta \eta_s = 2e^{\eta_s} -4 e^{-2\phi_s - \nu_s} |q_s|^2 -2 + 4e^{-3\nu_s} |q_s|^2 \\
&=& 2(e^{\eta_s}-1) -4 \frac{|q_s|^2}{e^{3\nu_s}} ( e^{-2\eta_s} -1). 
\end{eqnarray*}
By the maximum principle, 
$$  |\eta_s| \le \max(|\max(\eta_s|_{\partial B})|, |\min(\eta_s|_{\partial B})|) ,$$
which gives the desired estimate by Lemma \ref{phi-nu-close}. 
\end{proof}

\section{Comparison between affine spheres}\label{sec: comparison between affine spheres}
We denote by $F_s(z)$ the frame field of the affine sphere arising from the data $(\sigma, sq_0)$ on the surface $S$ restricted to the disk $B = \{|z|<\epsilon\}$. We normalize the affine sphere so that $F_s(0) = {\rm Id}$ for all $s>0$. With this choice $F_{s}$ will be complex-valued, belonging to a subgroup of $\SL(3, \C)$ isomorphic to $\SL(3,\R)$. Recall that $F_s$ is the solution to the ODE 
$$ \left\{ \begin{array}{l} F_s^{-1} dF_s = U_s dz + V_s d\bar z, \\
F_s(0) = {\rm Id}, \end{array} \right.$$
where $U_s,V_s$ are the matrices arising from the structure equations of the affine sphere. Precisely, 
\begin{equation} \label{U-V-def}
  U_s = \left( \begin{array}{ccc} 0&0& \frac12 e^{\phi_s} \\ 1 & \partial_z \phi_s & 0 \\ 0 & q_s e^{-\phi_s} & 0 
\end{array} \right), \qquad V_s = \left( \begin{array}{ccc} 0& \frac12 e^{\phi_s} & 0 \\ 0 & 0 & \bar q_s e^{-\phi_s} \\ 1 & 0 & \partial_{\bar z} \phi_s  
\end{array} \right).
\end{equation}

We denote by $F_M(w)$ the frame field of the (model) affine sphere over $\C$ with polynomial cubic differential $w^kdw^3$ normalized so that $F_M(0)={\rm Id}$. Note $F_M(w)$ solves 
$$ \left\{ \begin{array}{l} F_M^{-1} dF_M = U_M dw + V_M d\bar w, \\
F_M(0) = {\rm Id}. \end{array} \right.$$
We compare $F_s$ and $F_M$ on the disk $B = \{|z|<\epsilon\}$ centered at a zero of order $k$ for $q_0$. Recall that $w=s^{\frac1{k+3}}z$, so we consider 
$$ G_s(z) = F_s(z) F_M^{-1} (s^{\frac1{k+3}}z).$$
The matrices $G_s$ satisfy the differential equation
\begin{eqnarray*}
G_s^{-1}dG_s &=& F_M (s^{\frac1{k+3}} z) F_s^{-1}(z) [ dF_s \cdot F_M^{-1}(s^{\frac1{k+3}} z) - F_s(z) F_M^{-1}(s^{\frac1{k+3}} z) dF_M F_M^{-1}(s^{\frac1{k+3}}z) s^{\frac1{k+3}}] \\
&=& F_M(s^{\frac1{k+3}} z)[ U_s dz + V_s d\bar z - U_M dz - V_M d\bar z] F_M^{-1}(s^{\frac1{k+3}}z) \\
&=& F_M(s^{\frac1{k+3}} z)[ (U_s-U_M) dz + (V_s-V_M) d\bar z ] F_M^{-1}(s^{\frac1{k+3}}z) \\
&=& F_M(s^{\frac1{k+3}}z) \Theta_s F_M^{-1}(s^{\frac1{k+3}} z),
\end{eqnarray*}
when written in the $z$ coordinate, where
\begin{eqnarray*}
 \Theta_s(z) &=& \left(\begin{array}{ccc} 0&0 &(e^{\phi_s} - e^{\nu_s})/2 \\ 0 & \partial_z (\phi_s - \nu_s) & 0 \\ 
0& sz^k ( e^{-\phi_s} - e^{-\nu_s}) & 0 
\end{array} \right) dz  + {} \\
&&
\left(\begin{array}{ccc} 0 &(e^{\phi_s} - e^{\nu_s})/2 &0 \\ 0& 0& s\bar z^k ( e^{-\phi_s} - e^{-\nu_s})  \\
 0 & 0 &  \partial_{\bar z} (\phi_s - \nu_s)  \\ 
\end{array} \right) d \bar z. 
\end{eqnarray*}
and $\nu_s$ was defined in Section~\ref{sec: estimates around a zero}.

\begin{lemma}\label{Theta-bound}
On $B$, we have $\|\Theta_s\|_\infty \le C s^{\frac53} e^{-m(s)}$ as $s\to+\infty$.
\end{lemma}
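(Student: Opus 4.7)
The plan is to estimate each nonzero entry of the matrix-valued $1$-form $\Theta$ separately, exploiting the exponentially small difference $\eta_s := \phi_s - \nu_s = O(s^{1/6} e^{-m(s)})$ supplied by Lemma~\ref{lm:decay_ball}. The entries split into two natural classes: those involving exponential differences of the form $(e^{\phi_s} - e^{\nu_s})/2$ or $sz^k(e^{-\phi_s} - e^{-\nu_s})$ (and their conjugates), and those involving the first derivatives $\partial_z \eta_s$, $\partial_{\bar z}\eta_s$.

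For the first class, I would apply the mean value theorem to write $e^{\phi_s}-e^{\nu_s}= e^{\xi}\eta_s$ for some $\xi$ between $\phi_s$ and $\nu_s$, and analogously for the negative exponent. Controlling the prefactors is the core issue: I would use the change of variables $w=s^{1/(k+3)}z$ from Section~\ref{sec: estimates around a zero} to bound $\nu_s$ via the model conformal factor $\psi_s$, yielding $e^{\nu_s} \le C s^{2/3}$ on $B$. The $C^0$-closeness of $\phi_s$ to $\nu_s$ from Lemma~\ref{lm:decay_ball} then transfers the bound to $\phi_s$, and the same reasoning controls $e^{-\phi_s}$ and $e^{-\nu_s}$ in terms of (negative) powers of $s$. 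Combining with $|sz^k|\le s\epsilon^k$, each off-diagonal entry is bounded by $C s^{5/3}e^{-m(s)}$; in fact this power is generous, which gives breathing room in the estimates.

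For the derivative entries I would apply interior elliptic regularity to the PDE satisfied by $\eta_s$. Subtracting the two Wang equations \eqref{eq: Delta phi-s} for $\phi_s$ and $\nu_s$ and linearizing using the $C^0$-smallness of $\eta_s$ gives $\Delta \eta_s = A_s \eta_s + O(\eta_s^2)$ with $|A_s|\le C s^{2/3}$. Interior gradient estimates on balls of radius $r\sim s^{-1/3}$, the natural scale for the linearized equation, together with the $L^\infty$ bound on $\eta_s$, then produce $|\partial_z \eta_s|\le C s^{1/2}e^{-m(s)}$, which is comfortably absorbed in the stated bound.

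The main obstacle I anticipate is the derivative estimate near $\partial B$, where interior balls of radius $s^{-1/3}$ do not fit. I would handle this either by applying the gradient estimate on a slightly smaller subball of $B$ (accepting a mild shrinkage of the domain of validity, which is harmless later), or by invoking boundary Schauder estimates together with the small boundary data for $\eta_s$ from Lemma~\ref{phi-nu-close}. Once the derivative bound is in hand, the three classes of entries combine entry-by-entry to give the stated $\|\Theta\|_\infty \le C s^{5/3}e^{-m(s)}$.
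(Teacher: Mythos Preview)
Your proposal is correct and follows essentially the same approach as the paper: the Mean Value Theorem together with the sub/supersolution bounds on $\nu_s$ from \cite{DW} handles the exponential-difference entries, and elliptic regularity applied to the PDE satisfied by $\eta_s=\phi_s-\nu_s$ controls $\partial_z\eta_s$ and $\partial_{\bar z}\eta_s$. The paper phrases the derivative step as ``Schauder and $L^p$ estimates'' after bounding $\|\partial_z\partial_{\bar z}\eta_s\|_\infty\le C s^{5/6}e^{-m(s)}$ rather than rescaling to balls of radius $s^{-1/3}$, but these are equivalent, and your discussion of the boundary issue is if anything more explicit than the paper's.
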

\begin{proof}
We handle the various entries in $\Theta_s$ one-by-one.  First of all, the Mean Value Theorem implies for $|z|<\epsilon$ that 
$$ e^{\phi_s(z)} - e^{\nu_s(z)} = e^{p(z)} (\phi_s(z)-\nu_s(z))$$ 
for some $p(z)$ between $\phi_s(z)$ and $\nu_s(z)$. 
A straightforward application of the Maximum Principle applied to Equation \eqref{eq: Delta phi-s} on $B$, together with a boundary estimate from Lemma~\ref{error-decay}, then gives 
$$ \phi_s(z) \le \frac13 \log (2s^2 \epsilon^{2k}) + o(1),$$ 
and the same is true for $\nu_s(z)$. Then 
Lemma \ref{lm:decay_ball} shows 
$$ e^{\phi_s(z)}  -e^{\nu_s(z)}  = O( s^{\frac23} \cdot s^{\frac16} e^{-m(s)}) = O(s^{\frac56} e^{-m(s)}).$$

Similarly, $$ e^{-\phi_s(z)} - e^{-\nu_s(z)} = -e^{-p(z)} (\phi_s(z)-\nu_s(z))$$ 
for some $p(z)$ between $\phi_s(z)$ and $\nu_s(z)$. By considering the change of coordinate $w = s^{\frac1{k+3}} z$, we see for all $z\in\C$
$$\nu_s(z) = -\frac1{k+3}\, \log(s^2) + \nu_1(s^{-\frac1{k+3}}z).$$
Now $\nu_1$ is bounded below, as in \cite[Corollary 5.2]{DW}, which implies 
$$ e^{-\nu_s(z)} = O(s^{\frac2{k+3}}).$$
Lemma \ref{lm:decay_ball} then shows 
$$ e^{-\phi_s(z)} - e^{-\nu_s(z)} = O( s^{\frac2{k+3} + \frac16} e^{-m(s)}).$$

The result then follows if we show a decay estimate holds for $\partial_z ( \phi_s - \nu_s) $ and $\partial_{\bar z} ( \phi_s - \nu_s) $. Set $\eta_{s}=\phi_{s}-\nu_{s}$. We know from the computations in Lemma \ref{lm:decay_ball} that
$$ \Delta_{h_s} \eta_s = 2(e^{\eta_s} - 1) - 4 \frac{|q_s|^2}{e^{3\nu_s}} ( e^{-2\eta_s} -1).$$
Then $|q_s|^2/ e^{3\nu_s}$ is uniformly bounded, because $|q_{s}|^2=s^{2}|z|^{2k}$ and, using the same notation as in Lemma \ref{phi-nu-close} and the subsolution for $\psi_{s}$ found in \cite[Theorem 5.1]{DW},
\begin{align*}
    3\nu_{s}&=3\psi_{s}+\frac{6}{k+3}\log(s) \\
            &\geq \log(2s^{\frac{2k}{k+3}}|z|^{2k})+\log(s^{\frac{6}{k+3}}) \\
            & = \log(2s^{2}|z|^{2k}) \ .
\end{align*}
So
$$ \| \Delta_{h_s} \eta_s \|_\infty \le C s^{\frac16} e^{-m(s)} $$ 
and, using the supersolution for $\nu_{s}$ in \cite[Theorem 5.1]{DW},
$$ \| \partial_z \partial_{\bar z} \eta_s \|_\infty \le \|e^{\nu_s} \| _\infty \| \Delta_{h_s} \eta_s\|_\infty \le C (|q_s|^2+a)^{\frac{1}{3}} s^{\frac 16} e^{-m(s)} \le C s^{\frac56} e^{-m(s)}.$$ 
The bounds on $\partial_z ( \phi_s - \nu_s)$ and $\partial_{\bar z} ( \phi_s - \nu_s)$ then follow from the Schauder and $L^p$ estimates. 
\end{proof}

The Stokes rays in $B$ divide $B$ into open {\it sectors}.  We focus on {\it subsectors} $\Upsilon$ of these sectors, defined with opening angles strictly bounded away from those defining the sectors. 

\begin{prop} \label{Theta-conj-by-FM-small}
Let $\Upsilon$ be a subsector of $B$. For every $\gamma>0$, there is $s_0>0$ so that for all $z\in \Upsilon$ and $s>s_0$,
$$ \| F_M (s^{\frac1{k+3}} z) \Theta_s F_M^{-1}(s^{\frac1{k+3}} z) \|_\infty \le \gamma.$$
\end{prop}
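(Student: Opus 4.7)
The plan is to combine the rapid decay of $\Theta$ from Lemma~\ref{Theta-bound} with a sharper exponential growth bound on $F_M$ that is valid in the complement of a neighborhood of Stokes rays. The exponents work out because, off Stokes rays, the spread $\max_j \Ree(\omega^j\zeta) - \min_j \Ree(\omega^j\zeta)$ (in the natural coordinate $\zeta = \tfrac{3}{k+3}w^{(k+3)/3}$) is strictly less than $\sqrt{3}\,|\zeta|$, while the decay exponent in $\|\Theta\|_\infty$ scales with $\sqrt{3}\,|\zeta|$. The trigonometric slack then dominates, provided the parameter $\delta>1$ from Lemma~\ref{error-decay} is chosen sufficiently close to $1$.

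I would first compute that the \tit structure matrices $U_M, V_M$ (for $q = d\zeta^3$, $e^{\phi}\equiv 2^{1/3}$) commute with $U_MV_M = 2^{-2/3}\,\Id$ and have simultaneous spectra $\{2^{-1/3}\omega^j\}_j$ and $\{2^{-1/3}\bar\omega^j\}_j$ respectively, where $\omega = e^{2\pi i/3}$. Writing $U_M = PD_UP^{-1}$ and $V_M = PD_VP^{-1}$, the \tit frame integrates explicitly to
\begin{equation*}
F_M^{\mathrm{Tit}}(\zeta) \;=\; P\,\diag\!\bigl(e^{2^{2/3}\,\Ree(\omega^j\zeta)}\bigr)_{j=0,1,2}\,P^{-1}.
\end{equation*}
In each sector bounded by two consecutive Stokes rays of the $w$-plane, the DW analysis of the model frame $F_M(w)$ for $q_0 = w^k\,dw^3$ shows that $F_M(w)$ is asymptotic to a conjugate of $F_M^{\mathrm{Tit}}(\zeta)$; combined with the continuity/normalization $F_M(0) = \Id$ for small $|w|$, this yields a uniform bound in each sector of the form
\begin{equation*}
\|F_M(w)\,\Theta\,F_M^{-1}(w)\|_\infty \;\le\; C\,\|\Theta\|_\infty\,\exp\!\Bigl(2^{2/3}\bigl[\max_j \Ree(\omega^j\zeta) - \min_j \Ree(\omega^j\zeta)\bigr]\Bigr),
\end{equation*}
obtained by writing $\Theta$ in the simultaneous eigenbasis and reading off the entry-wise dilation.

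Next I would establish the elementary trigonometric inequality
\begin{equation*}
\max_j \Ree(\omega^j\zeta) - \min_j \Ree(\omega^j\zeta) \;\le\; \sqrt{3}\,|\zeta|,
\end{equation*}
with equality precisely for $\arg\zeta \in \tfrac{\pi}{6} + \tfrac{\pi}{3}\Z$ (the Stokes directions). On $B\setminus U$ this yields, for some $\epsilon_U > 0$, the strict inequality $(\max - \min) \le (\sqrt{3}-\epsilon_U)|\zeta|$. Writing $m(s) = \sqrt{3}\cdot 2^{2/3}a_s|\zeta_{\max}|/\delta$ where $|\zeta_{\max}| = \tfrac{3}{k+3}s^{1/3}\epsilon^{(k+3)/3}$ is the maximal value of $|\zeta|$ on $B$, and using $|\zeta(z)| \le |\zeta_{\max}|$ for all $z\in B$, Lemma~\ref{Theta-bound} then gives
\begin{equation*}
\|F_M\,\Theta\,F_M^{-1}\|_\infty \;\le\; C\,s^{5/3}\,\exp\!\Bigl(2^{2/3}|\zeta_{\max}|\bigl[\sqrt{3}-\epsilon_U - \sqrt{3}\,a_s/\delta\bigr]\Bigr).
\end{equation*}
Finally, I would choose $\delta > 1$ small enough that $\sqrt{3}/\delta > \sqrt{3} - \epsilon_U/2$; since $a_s \to 1$, for $s$ large the bracket is $\le -\epsilon_U/4$, and using $|\zeta_{\max}| \sim s^{1/3}$ the bound becomes $C\,s^{5/3}\,e^{-c\,s^{1/3}}$, which tends to zero uniformly on $B\setminus U$.

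The main technical obstacle will be the sharp uniform estimate on $\|F_M(w)\Theta F_M^{-1}(w)\|_\infty$ in terms of the \tit eigenvalues throughout a sector: DW supplies the \tit asymptotics for $|w|$ large in the sector (with sector-dependent but uniformly bounded conjugating constants), and the normalization $F_M(0)=\Id$ handles small $|w|$, but stitching these together cleanly -- without spoiling either the trigonometric slack $\epsilon_U$ or the polynomial prefactor $s^{5/3}$ -- requires care.
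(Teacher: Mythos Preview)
Your proposal is correct and follows essentially the same route as the paper. Both arguments bound $\|F_M\Theta F_M^{-1}\|_\infty$ by combining Lemma~\ref{Theta-bound} with the growth estimate $\|F_M\|_\infty\|F_M^{-1}\|_\infty \le C\exp\bigl(2^{2/3}(\max_j-\min_j)\Ree(\omega^j\zeta)\bigr)$, observe that the spread equals $\sqrt{3}|\zeta|$ exactly on Stokes rays and is strictly smaller by some $\epsilon_U>0$ on $B\setminus U$, and then choose $\delta$ close enough to $1$ so that the decay exponent $m(s)$ beats the growth; the paper simply cites \cite{DW} (Lemma~6.4 and p.~1768) for the uniform comparison of $F_M$ with the \c{T}i\c{t}eica frame across a sector, which is precisely the ``stitching'' issue you flag at the end.
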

\begin{proof} Let $F_{T}$ denote the frame field of the standard \c{T}i\c{t}eica surface (see Example~\ref{ex: Titeica}) with cubic differential $dx^3$ on the plane, which can be explicitly be written as
\begin{equation} \label{eq:F_T}
 F_T(x) = S\, \exp\left( \begin{array}{ccc} 2^{\frac23} \Ree (x) & 0&0 \\ 
0&2^{\frac23} \Ree (x/\omega)&0 \\ 0&0& 2^{\frac23} \Ree (x/\omega^2)
\end{array} \right) \, S^{-1}
\end{equation}
for $\omega = e^{2\pi i/3}$ and 
\[
    S=\begin{pmatrix} 1 & 1 & 1 \\
                    1 & \omega & \omega^2 \\
                    1 & \omega^2 & \omega 
     \end{pmatrix} \ . 
\]
From \cite[Lemma 6.4]{DW}, we know that, outside a compact set $K\subset\C$ and in an open subsector $\Upsilon$ uniformly bounded away from the Stokes lines, we have 
\begin{equation} \label{eq:F_M relate F_T} F_M(\frac3{k+3} y^{\frac{k+3}3}) = (A + o(1)) F_T (\frac3{k+3} y^{\frac{k+3}3})
\end{equation} 
as $y\to\infty$ in the subsector. Here $A$ is a constant nonsingular matrix that only depends on the subsector $\Upsilon$. Here we use $\frac3{k+3} y^{\frac{k+3}3}$ because the result of \cite{DW} is stated in terms of natural coordinates for $q_0$, instead of the coordinate centered at the zero of $q_0$. Therefore,
$$ \left\| F_M\left(\frac3{k+3} y^{\frac{k+3}3}\right) \right\|_{\infty} \le C  \left\| F_T \left(\frac3{k+3} y^{\frac{k+3}3}\right) \right\| _{\infty} \qquad \mbox{for all } y \in \Upsilon,$$
and a similar inequality  holds for the inverses of the frame matrices. Thus, recalling that we set $w=s^{\frac{1}{k+3}}z$ (so that $w^kdw^3=sz^kdz^3$) and substituting $s^{\frac{1}{k+3}}z$ in for $\frac3{k+3} y^{\frac{k+3}{3}}$ in the above expression, we see by (\ref{eq:F_M relate F_T})
\begin{align*}
    \| F_M (s^{\frac1{k+3}} z) \|_\infty \| F_M^{-1}(s^{\frac1{k+3}} z) \| _\infty &\leq C \|F_T(s^{\frac1{k+3}} z)\|_{\infty}\|\{(A+o(1))F_T(s^{\frac1{k+3}} z)\}^{-1}\|_{\infty}\\
    &\leq C \|F_T(s^{\frac1{k+3}} z)\|_{\infty}\|F_T^{-1}(s^{\frac1{k+3}} z)\|_{\infty}.
\end{align*}
Note the $L^\infty$ norm of $F_T$ is given by the largest eigenvalue in the frame $F_T$ from (\ref{eq:F_T}), while the $L^\infty$ norm of $F_T^{-1}$ is given by the inverse of the smallest eigenvalue of $F_T$.
Then by \cite[Equation (21)]{DW} and the discussion immediately surrounding that equation, we conclude that
$$ \| F_M (s^{\frac1{k+3}} z) \|_\infty \| F_M^{-1}(s^{\frac1{k+3}} z) \| _\infty \le C e^{c(\theta) s^{\frac13} \frac{3}{k+3}}\epsilon^{\frac{k+3}3},$$
%\footnote{Should $s^{\frac{1}{k+3}}z$ as the argument in $F_M(\cdot)$ instead be $s^{\frac{1}{3}}z$} 
where $c(\theta) \le 2^{\frac23}\sqrt3$ and achieves this maximum value when $\theta$ corresponds to a Stokes direction. In particular, when $z\in \Upsilon$, there is $\alpha>0$ such that $c(\theta)\le 2^{\frac23}\sqrt3 - \alpha$. In the definition~\ref{defn: m(s)} of $m(s)$, noting that $r_0=\frac{3}{k+3}\epsilon^{\frac{k+3}{3}}$, we may choose both $\delta$ sufficiently close to $1$ and $s$ sufficiently large so that 
$$ s^{\frac{5}{3}}e^{c(\theta)s^{\frac13} \frac{3}{k+3}\epsilon^{\frac{k+3}3}} e^{-m(s)} \le e^{-\beta s^{\frac13}}$$
for some $\beta>0$. Therefore, by Lemma \ref{Theta-bound},
$$ \| F_M (s^{\frac1{k+3}} z) \Theta_s F_M^{-1}(s^{\frac1{k+3}} z) \|_\infty \le C e^{-\beta s^{\frac13}} \le \gamma$$
for $s$ sufficiently large, independently of $z\in \Upsilon$. 
\end{proof}

\begin{cor} \label{Gs-to-one}
There exists $s_0>0$ such that for all $s>s_0$ and $z\in \Upsilon$, we have 
$$ G_s(z) = {\rm Id} + o(\rm Id) \qquad \mbox{as}\quad s\to+\infty.$$
\end{cor}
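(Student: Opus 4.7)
The plan is to integrate the matrix ODE
\[
    dG_s = G_s \cdot (F_M\, \Theta\, F_M^{-1}), \qquad G_s(0) = \mathrm{Id},
\]
along a suitable path from $0$ to $z$ and then apply a Gr\"onwall-type bound. Given $z \in B \setminus U$, I would use the radial segment $\gamma(t) = te^{i\theta}$, $t \in [0, |z|]$, with $\theta = \arg z$. Since $U$ is a neighborhood of the Stokes rays and $z$ lies in the interior of a sector bounded by two consecutive such rays, the open segment $\gamma((0, |z|])$ stays entirely in $B \setminus U$.

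The key observation is that along $\gamma$ the argument $\theta$ is \emph{constant}, so the exponent $c(\theta)$ that appears in the estimate $\|F_M(s^{1/(k+3)} z)\|\cdot \|F_M^{-1}(s^{1/(k+3)} z)\| \le C e^{c(\theta) s^{1/3} t^{(k+3)/3}}$ (from \cite[p.~1768]{DW}, as used in the proof of Proposition~\ref{Theta-conj-by-FM-small}) is a single uniform constant with $c(\theta) \le 2^{2/3}\sqrt{3} - \alpha$ for some $\alpha > 0$. Combined with Lemma~\ref{Theta-bound}, this yields
\[
    \|(F_M\, \Theta\, F_M^{-1})(\gamma(t))\| \le C\, s^{5/3}\, e^{c(\theta) s^{1/3} t^{(k+3)/3} - m(s)}
\]
for all $t \in [0, |z|]$. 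The exponent is maximized at $t = |z| \le \epsilon$, and --- exactly as at the end of the proof of Proposition~\ref{Theta-conj-by-FM-small} --- choosing $\delta$ close to $1$ and $s$ sufficiently large forces $c(\theta) s^{1/3} \epsilon^{(k+3)/3} - m(s) \le -\beta s^{1/3}$ for some $\beta > 0$; hence the integrand is bounded by $C s^{5/3} e^{-\beta s^{1/3}}$ uniformly on $\gamma$.

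Integrating along $\gamma$ and applying the standard matrix Gr\"onwall estimate, one obtains
\[
    \|G_s(z) - \mathrm{Id}\| \le \exp\!\left( \int_0^{|z|} \|(F_M\, \Theta\, F_M^{-1})(\gamma(t))\|\, dt\right) - 1 \le \exp(C\epsilon\, s^{5/3} e^{-\beta s^{1/3}}) - 1,
\]
which tends to $0$ as $s \to +\infty$, uniformly in $z \in B \setminus U$.

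The main subtlety I expect is ensuring the above estimate on $\|F_M\|\cdot \|F_M^{-1}\|$ is valid for \emph{all} $t \in [0, |z|]$, including near $t = 0$: the \cite{DW} bound is stated asymptotically (outside a compact set in the $w$-plane), but near $w = 0$ both $F_M$ and $F_M^{-1}$ are bounded by constants independent of $s$ (by continuity and $F_M(0) = \mathrm{Id}$). The two regimes glue together with a uniform multiplicative constant absorbed into $C$, so the radial integral poses no difficulty at the origin.
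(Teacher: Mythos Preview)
Your proof is correct and follows essentially the same route as the paper's: integrate the ODE for $G_s$ along the radial segment from $0$ to $z$ and apply a Gr\"onwall-type bound using the pointwise smallness of $F_M\Theta F_M^{-1}$ established in Proposition~\ref{Theta-conj-by-FM-small}. The paper just packages the Gr\"onwall step by citing Lemma~B.2 of \cite{DW} rather than writing it out, and it invokes Proposition~\ref{Theta-conj-by-FM-small} directly instead of re-deriving the exponential bound; your explicit treatment of the $t\approx 0$ regime is a reasonable addition that the paper leaves implicit.
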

\begin{proof}
Let $z=te^{i\theta} \in  \Upsilon$. Then by Lemma B.2 in \cite{DW} applied to $B(t) = F_M(s^{\frac1{k+3}}te^{i\theta}) \Theta_s(t) F_M^{-1} (s^{\frac1{k+3}} t e^{i\theta})$ with $t\in [0,\epsilon]$ and Proposition \ref{Theta-conj-by-FM-small}, we have 
$$ \| G_s(te^{i\theta}) - {\rm Id}\|_{\infty} \le C\gamma,$$
which can be made arbitrarily small as $s\to+\infty$. 
\end{proof}

In particular, this implies that for all $z\in  \Upsilon$, 
$$ F_s(z) = G_s(z) F_M(s^{\frac1{k+3}}z) = ({\rm Id} + o({\rm Id}))F_M(s^{\frac1{k+3}} z) \qquad \mbox{as}\quad s\to + \infty.$$
Combining this with the fact that $F_M(s^{\frac{1}{k+3}}z)=(A+o({\rm Id}))F_{T}(s^{\frac1{k+3}}z)$, where $A$ only depends on the sector in the complement of the Stokes line that contains $z$, we obtain

\begin{cor}  \label{FT-FS-in-sector}
For every $z\in B$ inside the $i^{th}$ sector in the complement of the Stokes rays,
$$ F_s(z) \cdot F_T^{-1}(s^{\frac1{k+3}} z) \stackrel{s\to+\infty}{\longrightarrow} A_{i} \ . $$
\end{cor}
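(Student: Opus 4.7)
The plan is to assemble the corollary from the two ingredients already in hand: the previous Corollary~\ref{Gs-to-one} giving $G_s(z) \to \mathrm{Id}$, and the asymptotic comparison between the model frame field $F_M$ and the \tit frame field $F_T$ quoted from \cite[Lemma~6.4]{DW} in the proof of Proposition~\ref{Theta-conj-by-FM-small}.

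First I would write the trivial factorization
\begin{equation*}
  F_s(z) \cdot F_T^{-1}(s^{\frac{1}{k+3}} z) \;=\; G_s(z) \cdot \bigl[\,F_M(s^{\frac{1}{k+3}} z)\, F_T^{-1}(s^{\frac{1}{k+3}} z)\,\bigr],
\end{equation*}
which is valid by the definition $G_s(z) = F_s(z) F_M^{-1}(s^{\frac{1}{k+3}} z)$. For fixed $z\in B$ lying inside the $i^{\text{th}}$ sector in the complement of the Stokes rays, we may assume after shrinking the neighborhood $U$ of the Stokes rays that $z\in B\setminus U$, so Corollary~\ref{Gs-to-one} gives $G_s(z) = \mathrm{Id} + o(\mathrm{Id})$ as $s\to+\infty$.

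Second, I would handle the bracketed factor. Setting $w = s^{\frac{1}{k+3}} z$, the point $w$ escapes to infinity as $s\to+\infty$ while remaining in the $i^{\text{th}}$ sector (sectors bounded by Stokes rays are scale-invariant, so sector membership depends only on $\arg z$). The quoted estimate from \cite[Lemma~6.4]{DW}, which states that $F_M(\tfrac{3}{k+3} w^{\frac{k+3}{3}}) = (A_i + o(1))\, F_T(\tfrac{3}{k+3} w^{\frac{k+3}{3}})$ as $w\to\infty$ in the $i^{\text{th}}$ sector for a sector-dependent matrix $A_i$, may be transported back to our coordinates (the $\frac{3}{k+3} w^{\frac{k+3}{3}}$ factor is just the natural coordinate of $q_0$ near the zero of order $k$), yielding
\begin{equation*}
  F_M(s^{\frac{1}{k+3}} z)\, F_T^{-1}(s^{\frac{1}{k+3}} z) \;\xrightarrow[s\to+\infty]{}\; A_i.
\end{equation*}

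Finally, multiplying the two convergences, the left factor tends to $\mathrm{Id}$ and the right factor tends to $A_i$, so the product converges to $A_i$ as required. I do not anticipate a real obstacle here: the content of the corollary is essentially notational, repackaging the previous comparison $F_s \approx F_M$ (from Corollary~\ref{Gs-to-one}) together with the external input $F_M \approx A_i F_T$ (from \cite{DW}) into a single statement relating the Hitchin frame field directly to the \tit model. The only bookkeeping item to check carefully is that the natural-coordinate factor of $\frac{3}{k+3}$ does not spoil the sector identification of $A_i$, which it does not, since natural coordinates merely fold each sector of angular width $\frac{2\pi}{k+3}$ in the $z$-plane into a standard sector of angular width $\frac{2\pi}{3}$ in the $\zeta = \frac{3}{k+3} z^{(k+3)/3}$-plane, preserving the combinatorial labeling.
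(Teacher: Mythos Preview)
Your proposal is correct and follows essentially the same approach as the paper: factor $F_s F_T^{-1} = G_s \cdot (F_M F_T^{-1})$, apply Corollary~\ref{Gs-to-one} to the first factor and the Dumas--Wolf estimate to the second, then multiply. The paper's argument is precisely this two-line combination, and your additional remarks on shrinking $U$ and on the natural-coordinate bookkeeping are consistent with the paper's parenthetical comment adapting $U$ to $z$.
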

\noindent Here, of course, we have adapted our choice of $\Upsilon$ to our choice of $z$.

\begin{cor} \label{find-unitary}
Let $I=[\theta_0,\theta_1]$ be a positively oriented circular arc containing one Stokes direction in its interior. Then 
$$ A_0^{-1} A_1 = SU_IS^{-1},$$
where $U_I=U_{(\theta_0, \theta_1)}$ is one of the unipotents introduced in \cite{DW}. 
\end{cor}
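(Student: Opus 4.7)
The plan is to identify the transition matrix $A_0^{-1} A_1$ with a Stokes matrix from the analysis of the model affine sphere frame $F_M$ in \cite{DW}, and then observe that in the chosen basis this Stokes matrix is one of the unipotents listed there.

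First, I would reformulate the characterization of $A_i$ purely in terms of the model frame $F_M$. By definition, $F_s(z) = G_s(z) F_M(s^{1/(k+3)} z)$, so
\[
F_s(z) \, F_T^{-1}(s^{1/(k+3)} z) = G_s(z) \cdot F_M(s^{1/(k+3)} z) \, F_T^{-1}(s^{1/(k+3)} z).
\]
Taking $z$ in the $i$-th sector of $B\setminus U$ and combining Corollary \ref{Gs-to-one} with Corollary \ref{FT-FS-in-sector}, the matrix $A_i$ is exactly the asymptotic value $F_M(w) F_T^{-1}(w) \to A_i$ as $|w| \to \infty$ along any ray in the $i$-th sector away from the Stokes rays. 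In other words, the $A_i$'s are precisely the connection matrices for the Stokes theory of $F_M$ on $\mathbb C$ relative to the Titeica frame $F_T$, and this identification is independent of the ambient Riemann surface.

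Second, I would invoke the Stokes analysis for the model affine sphere in \cite{DW}. There, one shows that $F_M(w) \sim A_i^{(M)} F_T(w)$ in each sector between consecutive Stokes rays and that the ratio between the asymptotic constants in two adjacent sectors is one of an explicit finite list of unipotent matrices, expressed as $S U S^{-1}$ with respect to the conjugating matrix $S$ appearing in the formula \eqref{eq:F_T} for $F_T$. Since the interval $[\theta_0,\theta_1]$ contains exactly one Stokes direction, the identification $A_i = A_i^{(M)}$ from the previous step yields
\[
A_0^{-1} A_1 = (A_0^{(M)})^{-1} A_1^{(M)} = S U S^{-1}
\]
for that specific unipotent $U$ from \cite{DW}.

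The main subtlety is making sure that the limit that defines $A_i$ in Corollary \ref{FT-FS-in-sector} is taken in the same sense (direction and rate of escape to infinity) as the limit used to define the Stokes constants in \cite{DW}; this amounts to observing that as $s \to \infty$ with $z$ fixed in the open sector, $w = s^{1/(k+3)} z$ tends to infinity along a fixed ray inside that sector, which is exactly the asymptotic regime in which the sectoral asymptotic $F_M \sim A_i^{(M)} F_T$ is established in \cite{DW}. The unipotency (as opposed to mere triangularity in the $S$-basis) of $A_0^{-1} A_1$ is then automatic from the requirement that both sectoral approximations are valid on the common ray at the Stokes direction itself, which forces the transition matrix to preserve each eigenline of the diagonal part of $F_T$ up to adding nilpotent off-diagonal terms. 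No further analytic work beyond quoting \cite{DW} is required.
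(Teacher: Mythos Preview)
Your proposal is correct and follows essentially the same approach as the paper: identify $A_i$ with the sectoral asymptotic constants of $F_M F_T^{-1}$ from \cite{DW}, then cite the Stokes analysis there (specifically \cite[Lemma~6.5]{DW}) to obtain $A_0^{-1}A_1 = SUS^{-1}$. The paper's own proof is a two-line citation of \cite[Lemma~6.5]{DW}; your additional discussion of the limiting regime and the unipotency is correct but unnecessary, since both are already contained in the cited result.
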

\begin{proof} 
The matrices $A_i$ are defined as the limit of $F_{M}(s^{\frac{1}{k+3}}e^{i\theta_{i}})F_{T}^{-1}(s^{\frac{1}{k+3}}e^{i\theta_{i}})$ as $s \to \infty$. Thus the result follows from \cite[Lemma 6.5]{DW}.
\end{proof}

\begin{thm}[Holonomy along arcs]\label{hol-arcs} Let $\gamma(t) = \epsilon e^{it}$ with $t\in I=[\theta_0, \theta_1]$ as in Corollary \ref{find-unitary}. Consider $G_t(s) = F_s(\gamma(t))F_T^{-1}(s^{\frac1{k+3}}\gamma(t))$. Then $$\lim_{s\to+\infty} G_{\theta_0}^{-1}(s) G_{\theta_1}(s) = SU_IS^{-1}, $$
where $U_I$ is the same unipotent as in Corollary \ref{find-unitary}.
\end{thm}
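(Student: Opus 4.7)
The plan is to reduce the statement to a direct juxtaposition of Corollary~\ref{FT-FS-in-sector} and Corollary~\ref{find-unitary}, since all the real analytic work has already been packaged into those statements. The key point is that $G_t(s)$, as defined in this theorem, is exactly the quantity whose limit as $s\to+\infty$ is controlled sector-by-sector by Corollary~\ref{FT-FS-in-sector}, and the two endpoints $\gamma(\theta_0)$ and $\gamma(\theta_1)$ lie in distinct sectors of $B$ separated by a single Stokes ray by hypothesis.

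First I would unpack the hypothesis: since $[\theta_0,\theta_1]$ contains a single Stokes direction \emph{in its interior}, neither $\theta_0$ nor $\theta_1$ is itself a Stokes direction, and the points $\gamma(\theta_j)=\epsilon e^{i\theta_j}$ lie in two adjacent sectors of the complement of the Stokes rays in $B$. Label these sectors by indices $0$ and $1$ and denote by $A_0, A_1$ the associated constant matrices produced by Corollary~\ref{FT-FS-in-sector}. Next I would pick the neighborhood $U$ of the Stokes rays appearing in Proposition~\ref{Theta-conj-by-FM-small} small enough that both $\gamma(\theta_0)$ and $\gamma(\theta_1)$ lie in $B\setminus U$; this is possible precisely because $\theta_0,\theta_1$ are not Stokes directions. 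With that choice, Corollary~\ref{FT-FS-in-sector} applied at $z=\gamma(\theta_0)$ and $z=\gamma(\theta_1)$ respectively yields
\[
G_{\theta_0}(s)\ \xrightarrow[s\to+\infty]{}\ A_0,\qquad G_{\theta_1}(s)\ \xrightarrow[s\to+\infty]{}\ A_1.
\]

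Now, since matrix inversion and multiplication are continuous on $\GL(3,\C)$ (the limits $A_0,A_1$ are invertible because $F_s$ and $F_T$ take values in $\SL(3,\C)$), these two pointwise limits combine to give
\[
G_{\theta_0}^{-1}(s)\, G_{\theta_1}(s)\ \xrightarrow[s\to+\infty]{}\ A_0^{-1}A_1.
\]
Finally, apply Corollary~\ref{find-unitary}: the right-hand side is exactly $SUS^{-1}$, where $U$ is the unipotent from \cite{DW} associated with the single Stokes ray crossed by the arc. This identifies the limit and completes the argument.

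The proof is therefore essentially a bookkeeping step, and I do not expect a genuine analytic obstacle: the hard work is in Proposition~\ref{Theta-conj-by-FM-small} and its Corollary~\ref{Gs-to-one}, which already handle the uniform smallness of the relevant connection form away from the Stokes rays. The only point requiring mild care is making sure that the neighborhood $U$ of the Stokes rays can be chosen to simultaneously exclude both endpoints $\gamma(\theta_0)$ and $\gamma(\theta_1)$ while the arc itself is allowed to cross the Stokes ray in between; this is harmless because Corollary~\ref{FT-FS-in-sector} is a pointwise statement at the endpoints, and no uniformity along the arc is needed for the product of two endpoint limits.
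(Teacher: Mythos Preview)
Your proposal is correct and follows essentially the same approach as the paper: apply Corollary~\ref{FT-FS-in-sector} at each endpoint to get $G_{\theta_j}(s)\to A_j$, then invoke Corollary~\ref{find-unitary} for $A_0^{-1}A_1=SUS^{-1}$. The paper's proof is a terse two-line version of exactly this; your added care about choosing $U$ to exclude both endpoints is a welcome clarification but not a departure.
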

\begin{proof}
This follows immediately from Corollaries \ref{FT-FS-in-sector} and \ref{find-unitary} because
\begin{eqnarray*}
G_{\theta_0} (s)  &=& F_s(\epsilon e^{i\theta_0}) F_T^{-1} (s^{\frac1{k+3}} \epsilon e^{i\theta_0}) \to A_0, \\
G_{\theta_1} (s)  &=& F_s(\epsilon e^{i\theta_1}) F_T^{-1} (s^{\frac1{k+3}} \epsilon e^{i\theta_1}) \to A_1, 
\end{eqnarray*}
and $A_0^{-1} A_1 = SU_IS^{-1}$. 
\end{proof} 

We remark that when the interval $[\theta_{0}, \theta_{1}]$ contains more than one Stokes direction we can still apply Corollary \ref{find-unitary} and Theorem \ref{hol-arcs}
after splitting the interval $[\theta_{0}, \theta_{1}]$ into subintervals containing only one Stokes direction and thus satisfying the assumptions of Corollary \ref{find-unitary}. Because the holonomy is multiplicative along concatenation of paths, we will have
$$\lim_{s\to+\infty} G_{\theta_0}^{-1}(s) G_{\theta_1}(s) = SUS^{-1}, $$
where $U$ is now a {\it product} of unipotents depending on the Stokes directions the arc crosses. Of course, in this setting the product $U$ is not necessarily a unipotent itself. 

\section{Asymptotic holonomy}\label{sec:ODEs_par_trans}
We want to compute the asymptotic holonomy of the flat connection $\na^s$ on the rank-3 bundle $E = \mathbbm{1} \oplus T_\C \Sigma$ along a $|q_0|^{\frac23}$-geodesic path that may cross some of the zeros of the cubic differential $q_{0}$. This is the quotient of the bundle $\mathbbm{1} \oplus T_{\C}\mathcal{D}$ introduced in Section \ref{sec:background} by the $\pi_{1}(\Sigma)$-action. We say such a geodesic path $\gamma$ is \emph{regular} if each segment away from the zeros of $q_0$ is
\begin{itemize}
\item not in the directions of the walls of a Weyl chamber, so that $\Ree(\gamma^* \phi_i) \neq \Ree(\gamma^* \phi_j)$ for $i \neq j$ (here $\phi_i$ is a root of $p(\lambda) = \lambda^3-q_0$);
\item not in the Stokes directions.
\end{itemize}
We later remove these hypotheses.

It is convenient to work in the universal cover of $S$.  Equip $S$ with the conformal hyperbolic metric and identify $\tilde S$ with the strip model of the hyperbolic plane 
$$ \h^2 = \{ w= \mu+i\nu \in \C : |\mu|<\pi/2 \} $$
with metric $g_{\h^2} = \frac{d\mu^2 + d\nu^2}{\cos^2(\mu)}$. The vertical line $\mu =0$ with arc-length parameter 
$\nu$ is a geodesic; so a hyperbolic deck transformation can be represented, up to conjugation, by the transformation $T(w) = w +iL$, where $L$ is the translation length.

\begin{rmk}\label{rem: strip model}
We want to use this model because it gives a way of defining a frame on $E$ which we can use to compute parallel transport.  The bundle $E$ lifts to a bundle $\tilde E$ over $\h^2$ which we now trivialize using the global frame $\mathcal F = \{1,\partial_w, \partial_{\bar w}\}$. This frame is not parallel with respect to $\na^s$. However, because it is globally defined, we can define the holonomy of $\tilde \na^s$ along an arc as a comparison between the terminal parallel transport of a frame in the fixed basis and the frame at the terminal point. 
Given $[\gamma] \in \pi_1(S)$, we can assume that the hyperbolic isometry corresponding to $[\gamma]$ is $T_\gamma(w)=w+iL$ for some $L\in\R$. Fix $w_0\in\h^2$ and let $w_1=T_\gamma(w_0)$. Note that $\mathcal F (w_0) = T_\gamma^*\mathcal F(w_1)$.  Let $c_\gamma(t)$ be the $|q_0|^{\frac23}$-geodesic connecting $w_0$ and $w_1$ with $t\in[0,1]$. If we have a matrix representation $M(c_\gamma(t))$ of the parallel transport along $c_\gamma(t)$ with respect to the frame $\mathcal F$, then the matrix $M(c_\gamma(1))$ represents the holonomy of the flat connection between the final and initial points as their frames are identified in the quotient.
\end{rmk}

\begin{rmk}
We can assume that $w_0$ and $w_1$ are not zeros of $q_0$, so that the geodesic path $c_\gamma$ starts and ends with a segment not containing any zeros. 
\end{rmk}

The path $c_\gamma$ will in general cross some of the zeros $p_1,\dots,p_\ell$ of the cubic differential $q_0$ with multiplicities $k_1,\dots, k_\ell$, respectively.  In fact, we write $c_\gamma$ as the union of $c_1,\dots,c_\ell$, where each $c_i$ is the straight line path in the flat coordinates for $|q_0|^{\frac23}$ from $p_{i}$ to $p_{i-1}$. Each $c_i$ does not intersect any zeros of $q_0$ except at its endpoints.  We fix $\epsilon>0$ and identify a neighborhood $N_i$ of each zero and a conformal coordinate $z$ so that $q_0 = z^{k_i}dz^3$ on $N_i$ and the $|q_0|^{\frac23}$-radius of $N_i$ is $\epsilon$.  Note for $\epsilon$ small the closures  $\overline{N_i}$ do not intersect. We modify $c_\gamma$ to form a new path $\tilde c_\gamma$ by deleting each $c_\gamma \cap N_i$ and replacing it with an arc $\beta_i$ in $\partial N_i$ so that $\tilde c_\gamma$ is continuous and homotopic to the original geodesic. Now $c_\gamma \setminus \cup_i \overline{N_i}$ consists of a number of line segments $\tilde c_i\subset c_i$ in the flat $q_0$-coordinates.  Divide each line segment $\tilde c_i$ between $N_i$ and $N_{i+1}$ into two segments $\tilde\delta_i$ and $\tilde\alpha_{i-1}$, so that each $\overline{N_i}$ has an incoming line segment $\tilde\alpha_i$ and an outgoing one $\tilde\delta_i$.

\begin{figure}[h!]
\begin{center}
\includegraphics[scale=0.7]{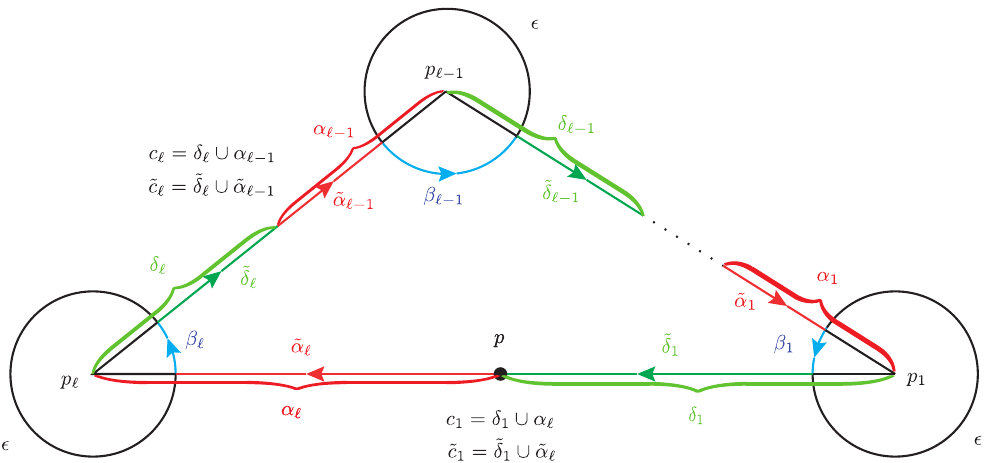}
\caption{Definition of the subpaths.}\label{fig:subpaths}
\end{center}
\end{figure}

In total, $\tilde c_\gamma$ is the concatenation of $\tilde\alpha_\ell,\beta_\ell,\tilde\delta_\ell,\dots,\tilde\alpha_1,\beta_1,\tilde\delta_1$.  The basepoint is $p = \tilde\alpha_\ell\cap \tilde\delta_1$.  We also denote by ${\alpha}_{i}$ and $\delta_{i}$ the prolongments of $\tilde\alpha_{i}$ and $\tilde\delta_{i}$ to their forward and backward zero respectively. Since $\tilde c_\gamma$ and $c_\gamma$ are homotopic, the holonomies along these paths are the same. Then
 $$ \Hol_s(\tilde c_\gamma) = \Hol_s(\tilde\delta_1) \Hol_s(\beta_1) \Hol_s(\tilde\alpha_1) \cdots \Hol_s (\tilde\delta_\ell) \Hol_s(\beta_\ell) \Hol_s(\tilde\alpha_\ell).$$
 We want to find estimates for each factor and arrive at something of the form
 $$ \Hol_s(\tilde c_\gamma) = A(s) + E(s), \qquad \|E(s)\| = o(\|A(s)\|)$$
 with $A(s)$ explicit and depending only on $q_0$ and on the geodesic path $c_\gamma$. 
 
 \begin{rmk}
 Let $z=z(w)$ be a conformal change of coordinates. For instance, let $z$ be a natural coordinate for $q_s$. The coordinate $z$ induces a new frame $\mathcal G = \{1,\partial_z,\partial_{\bar z}\}$ of the bundle $E$. There is a diagonal matrix $d(z)$ depending on the derivatives of $z$ so that $\mathcal F = \mathcal G\, d(z)$. Moreover, if $z_1$ and $z_2$ are two natural coordinates at a point, then $z_1$ and $z_2$ differ by a translation and a multiplication by a third root of unity. If we choose the natural coordinates so that they induce the same frame on the overlaps, we can multiply the matrices representing the parallel transport along consecutive arcs.
 \end{rmk}
 
 \begin{rmk}
If $(\sU,z_1)$ and $(\sV,z_2)$ are two natural coordinate charts that cover a path $\gamma$ and overlap at a point $a$, we note that $z_1$ and $z_2$ induce the same frame at $a$ if and only if the path $\gamma$ makes the same angle with the positive horizontal axis, as seen in the coordinates $z_1$ and $z_2$. 
 \end{rmk}

 Let $a,a' \in \h^2$ and denote by $T_{a,a'}$ the parallel transport from $a$ to $a'$ for the lift $\tilde\na^s$ of the flat connection. Assume $a$ and $a'$ are not zeros of $q_0$ and are in the same natural coordinate $z$. Let $\mathcal G(a) = \{1, \partial_z , \partial_{\bar z}\}$ be the standard frame induced by $z$. The frame $\mathcal G$ is defined at $a'$ as well; so we can find a matrix $\Psi_s(a')$ such that 
 $$ T_{a,a'}(\mathcal G(a)) \Psi_s(a') = \mathcal G(a').$$
 Let $z(t)$ be a path connecting $a$ and $a'$. The parallel transport condition is equivalent to $\Psi_s(z(t))$ being a solution of the initial value problem
$$ \left\{\begin{array}{l} \Psi_s(z(0)) = {\rm Id}, \\ \Psi_s^{-1} d\Psi_s = U_sdz + V_s d\bar z,  \end{array}
\right.$$
where $U_s$ and $V_s$ are defined in Equation (\ref{U-V-def}).  The matrix representing the parallel transport $T_{a,a'} \!: \tilde E_a \to \tilde E_{a'}$ with respect to the frames $\mathcal G(a)$ and $\mathcal G(a')$ is then $\Psi_s(a')^{-1}$. 

In what follows, instead of solving the initial value problem above, we compare $\Psi_{s}$ with the solution $\Psi_T$ of the initial value problem 
$$ \left\{\begin{array}{l} \Psi_T(z(0)) = {\rm Id}, \\ \Psi_T^{-1} d\Psi_T = U_Tdz + V_T d\bar z,  \end{array}
\right.$$
where $U_T$ and $V_T$ are the matrices appearing in the structure equations for the affine sphere over $\C$ with constant cubic differential $dz^3$. We know (\cite{loftin2007flat}) that 
$$\Psi_T (|w|e^{i\theta}) = S \exp (|z| D(\theta)) S^{-1},$$
where
\begin{equation} \label{eq:d(theta)}
 D(\theta) = \left( \begin{array}{ccc} 2^{\frac23} \cos\theta &0&0 \\ 0& 2^{\frac23} \cos(\theta-2\pi/3) & 0 \\
0&0& 2^{\frac23} \cos(\theta - 4\pi/3) \end{array} \right).
\end{equation} 
and $S$ is the conjugating matrix that appeared e.g. in Proposition~\ref{Theta-conj-by-FM-small}.
\begin{rmk}
Note that $\Psi_s$ solves the same ODE as the frame field $F_s$ of the associated affine sphere. They differ by the 
value at the initial point. 
\end{rmk}

The first author, in \cite{loftin2007flat}, considered the case of geodesic paths which do not hit any zeros and determined the asymptotic behavior of the eigenvalues along such paths.  We would like to use Proposition 3 of \cite{loftin2007flat}, but unfortunately the published statement  must be modified to Proposition \ref{hol-rays} below, as there is a gap in the proof.  The final paragraph of the proof in \cite{loftin2007flat} is unsupported.  The main theorem of \cite{loftin2007flat} is still true, as follows from the results presented here.  The main additional technique needed, which was available at the writing of \cite{loftin2007flat}, is the fact that the largest eigenvalue of the holonomy along a path (and the reverse path) is enough to determine all the eigenvalues in $\SL(3,\R)$.  The first author regrets the error. 

Thus we have the following proposition.  We note that Collier-Li and Mochizuki have proved stronger estimates in a more general setting in the case in which no two eigenvalues are equal \cite{Collier-Li,Mochizuki_asymptotics}. 

\begin{prop}[Holonomy along rays]\label{hol-rays}The parallel transports along the segments $\tilde\alpha_i$ and $\tilde\delta_i$ with respect to the frame $\mathcal G$ induced by a natural coordinate $z$ for $q_s$ are given by the matrices
\begin{eqnarray*}
\Hol_s(\tilde\alpha_i)  &=& S \diag(e^{s^{\frac13}\tilde\mu_1^i}, e^{s^{\frac13}\tilde\mu_2^i}, e^{s^{\frac13}\tilde\mu_3^i}) S^{-1}  + o(e^{s^{\frac13}\tilde\mu^i}), \\
\Hol_s(\tilde\delta_i)  &=& S \diag(e^{s^{\frac13}\tilde\lambda_1^i}, e^{s^{\frac13}\tilde\lambda_2^i}, e^{s^{\frac13}\tilde\lambda_3^i}) S^{-1} + o(e^{s^{\frac13}\tilde\lambda^i}), 
\end{eqnarray*}
as $s\to+\infty$, where
$$ \tilde\mu^i_j = -2^{\frac23} \Ree\left(\int_{\tilde\alpha_i} \phi_j \right), \qquad \tilde\lambda^i_j = -2^{\frac23} \Ree\left(\int_{\tilde\delta_i} \phi_j \right),$$
$\phi_j$ are the roots of $\lambda^3-q_0$, so that $\phi_{j}=e^{\frac{2\pi \sqrt{-1}(j-1)}{3}}\phi_{1}$, and 
$$\tilde\mu^i = \max\{\tilde\mu_1^i,\tilde\mu_2^i,\tilde\mu_3^i\}, \qquad \tilde\lambda^i = \max\{\tilde\lambda_1^i, \tilde\lambda_2^i, \tilde\lambda_3^i\}.$$\
\end{prop}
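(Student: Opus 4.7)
The plan is to reduce to the explicit \tit model through a comparison argument directly analogous to the one developed in Section~\ref{sec: comparison between affine spheres}. Since $\tilde\alpha_i$ and $\tilde\delta_i$ each sit at a uniform positive $|q_0|^{\frac{2}{3}}$-distance from every zero of $q_0$, a tubular neighborhood of each such segment carries a natural coordinate $z$ for $q_s$ in which $q_s = s\,dz^3$; writing $\tilde z = s^{\frac{1}{3}}z$, the segment is realized as a straight line from $0$ to $Le^{i\theta}$ in the $\tilde z$-plane of length $s^{\frac{1}{3}}L$, where $L$ is its $|q_0|^{\frac{2}{3}}$-length and $\theta$ its angle. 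In this zero-free setting the model frame $F_M$ coincides with the \tit frame $F_T$, and the closed form $\Psi_T(|\tilde z|e^{i\theta}) = S\exp(|\tilde z|D(\theta))S^{-1}$ recorded in \eqref{eq:d(theta)} furnishes the explicit target.

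I would carry out three steps. First, I would apply Lemma~\ref{error-decay} over a tubular neighborhood of the segment to obtain $\F_s = O(s^{\frac{1}{6}}e^{-m(s)})$ uniformly, and then, following the estimates in Lemma~\ref{Theta-bound}, deduce $\|U_s - U_T\|, \|V_s - V_T\| = O(s^{\frac{5}{3}}e^{-m(s)})$. Second, setting $G_s(t) = F_s(z(t))F_T^{-1}(\tilde z(t))$ along the segment, a computation identical to the one preceding Lemma~\ref{Theta-bound} shows $G_s^{-1}dG_s = F_T\,\Theta\,F_T^{-1}$; the regularity hypothesis on $\theta$ (not a Stokes direction, not a Weyl-chamber wall) is exactly what allows an analogue of Proposition~\ref{Theta-conj-by-FM-small}, since the amplification $\|F_T\|\,\|F_T^{-1}\| \le e^{c(\theta)s^{\frac{1}{3}}L}$ has $c(\theta)<2^{\frac{2}{3}}\sqrt{3}$, strictly dominated by $m(s)$ once $\delta$ is chosen close enough to $1$. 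A Gr\"onwall argument as in Corollary~\ref{Gs-to-one} then yields $G_s\to\mathrm{Id}$ uniformly on the segment. Third, with the normalization $F_s(0)=\mathrm{Id}$ so that $\Psi_s = F_s$, one concludes
\[
\Psi_s(Le^{i\theta}) = \bigl(\mathrm{Id}+o(\mathrm{Id})\bigr)\,\Psi_T(s^{\frac{1}{3}}Le^{i\theta}),
\]
and inverting (as required by the convention $\Hol_s = \Psi_s^{-1}$ recorded in Section~\ref{sec:ODEs_par_trans}) produces the claimed diagonal form, once one matches $-2^{\frac{2}{3}}L\cos(\theta - 2\pi(j-1)/3) = -2^{\frac{2}{3}}\Ree\int_{\tilde\alpha_i}\phi_j$ using $\phi_j = \omega^{-(j-1)}\,dz$ in the natural coordinate (with $\omega = e^{2\pi i/3}$). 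The argument for $\tilde\delta_i$ is identical.

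The main obstacle -- and precisely the gap in the original argument of \cite{loftin2007flat} -- lies in controlling the error so that it is $o(e^{s^{\frac{1}{3}}\tilde\mu^i})$ rather than merely $o(\|\Psi_T\|)$ in operator norm. The matrix $F_T\,\Theta\,F_T^{-1}$ has entries with competing exponentials: an amplification growing like $e^{c(\theta)s^{\frac{1}{3}}L}$ against the decay $e^{-m(s)}$ inherited from $\Theta$. Closing the estimate requires the strict inequality $c(\theta) + \alpha < 2^{\frac{2}{3}}\sqrt{3}$ for regular $\theta$, together with the refinement of $m(s)$ in Lemma~\ref{error-decay} tuned to be close to $2^{\frac{2}{3}}\sqrt{3}\,s^{\frac{1}{3}}L$ (choosing $\delta$ close to $1$ after $\theta$ is fixed). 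The degenerate Stokes and Weyl-wall directions fail this strict inequality and are excluded from the present statement, to be reinstated later via the no-branching analysis of Section~\ref{sec:no branching}.
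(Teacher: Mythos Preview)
The paper does not supply its own proof here; it quotes the result from \cite{loftin2007flat}, weakening the original statement to what that paper's argument (minus its unsupported final paragraph) actually establishes. Your overall strategy---compare $\Psi_s$ to the \tit solution $\Psi_T$ via an osculation map and a Gr\"onwall bound---is indeed the approach of \cite{loftin2007flat}, and the paper notes that Collier--Li and Mochizuki sharpen it further.

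There is, however, a real gap in your estimate. You assert that $m(s)$ can be tuned close to $2^{\frac23}\sqrt3\,s^{\frac13}L$, but in Lemma~\ref{error-decay} the exponent is $m(s)=\sqrt3\cdot 2^{\frac23}a_s s^{\frac13}r_0/\delta$ with $r_0$ the $q_0$-distance from the \emph{point under consideration} to the nearest zero---not the segment length. Along $\tilde\alpha_i$ this distance drops to $\epsilon'=\tfrac{3}{k+3}\epsilon^{(k+3)/3}$ at the endpoint on $\partial N_i$, so the uniform tubular bound gives only $\F_s=O(s^{\frac16}e^{-\sqrt3\cdot 2^{2/3}s^{1/3}\epsilon'/\delta})$, and the amplification $e^{c(\theta)s^{1/3}L'}$ over the full segment length $L'\sim L/2$ beats it; the Gr\"onwall integral diverges. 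The repair is to keep the decay pointwise and normalize the frame at the $\partial N_i$ endpoint: at $q_0$-arclength $t$ from that endpoint one has $r(t)\ge\epsilon'+t$, so the Gr\"onwall integrand is at most
\[
C\,s^{\frac16}\exp\!\Big(s^{\frac13}\big[(c(\theta)-\sqrt3\cdot 2^{\frac23}/\delta)\,t-\sqrt3\cdot 2^{\frac23}\epsilon'/\delta\big]\Big),
\]
and choosing $\delta>1$ with $c(\theta)<\sqrt3\cdot 2^{\frac23}/\delta$ (possible exactly when $\theta$ is not a Stokes direction) makes this uniformly $O(e^{-Cs^{1/3}})$, giving $G_s\to\Id$ and indeed the improved error $O(e^{s^{1/3}(\tilde\mu^i-C)})$ noted in the remark following the proposition. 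Only the non-Stokes hypothesis enters this analytic step; the Weyl-wall exclusion concerns uniqueness of the maximal $\tilde\mu^i_j$, not the bound itself. Finally, the gap in \cite{loftin2007flat} that the paper flags is a different issue---passing from the largest eigenvalue to all three---not the error control you describe in your last paragraph.
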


\begin{rmk}
The error bounds in the previous proposition can be improved to $O(e^{s^{\frac13}(\tilde\mu^i - C)})$ for $C$ a positive constant depending the $q_0$-distance of the path to the zero set of $q_0$, by using Lemma \ref{error-decay} instead of the coarser bounds used in \cite{loftin2007flat}. 
\end{rmk} 

Note that if we parametrize the path $\tilde\delta_i$ by $\tilde\delta_i(t) = t e^{i\theta_i}$ with $t\in[\epsilon',L_i/2]$ for $L_i$ the $q_0$-length of the geodesic segment between successive zeros of $q_0$ and $\epsilon' = \frac3{k+3}\epsilon^{\frac{k+3}3}$, then
 $$ \Ree \left(\int_{\tilde\delta_i} \phi_j \right) = \Ree \left(\int_{\epsilon'}^{L_i/2}\tilde\delta_i^* \phi_j \right) = (L_i/2-\epsilon') \cos(\theta_i - 2(j-1)\pi/3).$$
 Hence the position of the largest eigenvalue of the diagonal matrix depends only on the angle that $\tilde\delta_i$ makes with the positive $x$-axis in the chosen natural coordinates. 
 
 \begin{rmk} \label{rmk:hol-diagonal}
 There is a relation between the diagonal matrices in Proposition \ref{hol-rays} and $\Psi_T$. Precisely, if $w=re^{i\theta}$ and $\gamma(t) = te^{i\theta}$ with $t\in[0,r]$, then 
 $$ \Psi_T(w) = S \diag (e^{-\mu_1},e^{-\mu_2},e^{-\mu_3}) S^{-1} \eqqcolon S D(\gamma)S^{-1},$$
 where
 $$ \mu_j = -2^{\frac23} \Ree\left( \int_\gamma \phi_j \right),$$
 and we choose the same conjugating matrix $S$ as in Section \ref{Theta-conj-by-FM-small}.
 \end{rmk}

Now, because the paths ${\tilde{\delta}}_{i}$ and ${\tilde{\alpha}}_{i}$ are part of a geodesic for the flat metric $|q_{0}|^{\frac{2}{3}}$, the angle between them is at least $\pi$ (measured in the singular flat metric) at either side. Since Stokes rays are $\pi/3$ apart, the interval $[\theta_{i}, \theta_{i+1}]$ contains at least three Stokes directions. 

We now compute the parallel transport along the circular arcs $\beta_i$ in terms of the unipotents associated to the Stokes rays (cf. Corollary \ref{find-unitary}). In the $z$-coordinate centered at a zero so that $q_s = sz^{k_i}dz^3$, the path $\beta_i$ is parametrized by $\beta_i(\theta) = \epsilon e^{i\theta}$ with $\theta \in [\theta_i, \theta_{i+1}]$.  Let $w = \frac3{k+3}\omega^{j_i}s^{\frac13} z^{\frac{k+3}3}$ be a natural coordinate for $q_s$. Choose $j_i\in \{1,2,3\}$ so that the angle the incoming path $\alpha_i$ makes with the positive $x$-axis coincides with the angle we saw in the previous natural coordinate chart.

 \begin{prop}[Holonomy along arcs]\label{prop:hol_arcs}
Assume $\theta_i$ and $\theta_{i+1}$ do not correspond to Stokes directions. Then the holonomy along $\beta_i$ satisfies 
$$ \Hol_s(\beta_i) = F_T^{-1} (\beta_i(\theta_{i+1})) (SU(\theta_i, \theta_{i+1})^{-1} S^{-1} + o(\Id)) F_T(\beta_i(\theta_i)) \qquad \mbox{as } s\to+\infty$$
with respect to the frame induced by the natural coordinate. Here $U(\theta_i,\theta_{i+1})$ is a product of unipotent matrices depending on which Stokes rays the path $\beta_i$ crosses.
\end{prop}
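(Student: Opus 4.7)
The plan is to derive the proposition as a direct rearrangement of Theorem~\ref{hol-arcs}. Specifically, I would (i) write $\Hol_s(\beta_i)$ as a ratio of frame field values of $F_s$ at the two endpoints of $\beta_i$, (ii) interpose the \c{T}i\c{t}eica frame $F_T$ to produce the quantities $G_\theta(s)$ appearing in Theorem~\ref{hol-arcs}, and (iii) invoke that theorem, together with the remark following it to handle the multi-Stokes case.

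Concretely, since $F_s$ and the normalized parallel transport $\Psi_s$ along $\beta_i$ satisfy the same ODE, they differ only by a constant left factor, so
\[
 \Hol_s(\beta_i) = F_s(\beta_i(\theta_{i+1}))^{-1}\, F_s(\beta_i(\theta_i))
\]
in the natural-coordinate frame. Inserting $F_T^{-1} F_T = \Id$ at each endpoint and using the definition $G_t(s) = F_s(\beta_i(t))\,F_T^{-1}(\beta_i(t))$ from Theorem~\ref{hol-arcs}, this rewrites as
\[
 \Hol_s(\beta_i) = F_T^{-1}(\beta_i(\theta_{i+1}))\,\bigl[G_{\theta_{i+1}}(s)^{-1}\, G_{\theta_i}(s)\bigr]\,F_T(\beta_i(\theta_i)).
\]
If $[\theta_i,\theta_{i+1}]$ contains a single Stokes direction, Theorem~\ref{hol-arcs} yields $G_{\theta_i}(s)^{-1}\, G_{\theta_{i+1}}(s) \to SUS^{-1}$ for the corresponding basic unipotent $U$; inverting gives $G_{\theta_{i+1}}(s)^{-1}\, G_{\theta_i}(s) = SU^{-1}S^{-1} + o(\Id)$, and substitution proves the proposition in this case. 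For several Stokes directions in $[\theta_i,\theta_{i+1}]$, I would split $\beta_i$ into sub-arcs each crossing exactly one Stokes direction, apply the single-Stokes step to each, and compose using multiplicativity of holonomy along concatenations, exactly as in the remark following Theorem~\ref{hol-arcs}; the resulting product of basic unipotents is $U(\theta_i,\theta_{i+1})$.

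The main subtlety will be keeping frame conventions straight across the three coordinate systems in play (the local $z$-coordinate on $B$, the rescaled model coordinate $w = s^{1/(k+3)} z$, and the natural coordinate for $q_s$). The choice of cube root of unity $\omega^{j_i}$ built into the natural coordinate just above the proposition is precisely what ensures that the inserted $F_T$ factors are expressed in the same frame in which $F_s(\beta_i(\theta_{i+1}))^{-1} F_s(\beta_i(\theta_i))$ represents the holonomy, so that the identification of the middle factor with $G_{\theta_{i+1}}^{-1} G_{\theta_i}$ is valid. A smaller technical point is that passing from $SUS^{-1} + o(\Id)$ to its inverse $SU^{-1}S^{-1} + o(\Id)$ is legitimate because $SUS^{-1}$ is invertible, whence the expansion $(A + o(\Id))^{-1} = A^{-1} + o(\Id)$ applies; the conjugation by $F_T^{\pm 1}$ then delivers the error in the precise form asserted by the proposition.
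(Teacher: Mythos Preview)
Your proposal is correct and follows essentially the same route as the paper: both arguments express the holonomy as $F_s(\beta_i(\theta_{i+1}))^{-1}F_s(\beta_i(\theta_i))$ (equivalently, the inverse of $\Psi_s(\beta_i(\theta_{i+1})) = F_s(\epsilon e^{i\theta_i})^{-1}F_s(\epsilon e^{i\theta_{i+1}})$), interpose the \c{T}i\c{t}eica frame $F_T$, and then invoke the limit $G_{\theta_i}^{-1}G_{\theta_{i+1}}\to SUS^{-1}$. The only cosmetic difference is that the paper cites Corollaries~\ref{FT-FS-in-sector} and~\ref{find-unitary} directly rather than their repackaging as Theorem~\ref{hol-arcs}, and handles the multi-Stokes case via the same subdivision remark you invoke.
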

\begin{proof}
Recall that $\Hol_s(\beta_i)$ is the inverse of the matrix $\Psi_s$ which solves the initial value problem 
$$ \left\{ \begin{array}{l} \Psi_s^{-1} d\Psi_s = U_s dz + V_s d\bar z, \\
\Psi(\beta_i(\theta_i)) = {\rm Id}. \end{array} \right.$$
The frame field $F_s(z)$ is a solution of the same ODE with different initial conditions, so 
$$ \Psi_s(z) = F_s(\epsilon e^{i\theta_i})^{-1} F_s(z)$$
is the solution of the above initial value problem.
Now, by Corollaries \ref{FT-FS-in-sector} and \ref{find-unitary} and the subsequent remark,
$$ \lim_{s\to+\infty} F_T(s^{\frac1{k+3}}\epsilon e^{i\theta_i})F_{s}^{-1}(\epsilon e^{i\theta_{i}})F_s(\epsilon e^{i\theta_{i+1}}) F_T^{-1} (s^{\frac1{k+3}} \epsilon e^{i\theta_{i+1}}) = A_i^{-1} A_{i+1} = SUS^{-1},$$
where $U=U(\theta_{i},\theta_{i+1})$ is as in the statement. Hence
$$\Psi_s(\epsilon e^{i\theta_{i+1}}) = F_s(\epsilon e^{i\theta_i})^{-1} F_s(\epsilon e^{i\theta_{i+1}}) = F_T^{-1}(s^{\frac1{k+3}} \epsilon e^{i\theta_i}) (SUS^{-1} + o(1)) F_T(s^{\frac1{k+3}} \epsilon e^{i\theta_{i+1}}).
$$
Because $\Hol_s= \Psi_s^{-1}$ the claim follows. 
\end{proof}

Combining the holonomy of each subpath (Proposition \ref{prop:hol_arcs} and Proposition \ref{hol-rays}), we obtain
\begin{eqnarray}
\Hol_s(\tilde c_\gamma) &=& \prod_{i=1}^\ell \left( SD(\tilde\delta_i)^{-1} S^{-1} +o(e^{s^{\frac{1}{3}}\tilde{\lambda}^{i}}) \right) \cdot{}\\ \notag
&& SD(\delta_i \setminus \tilde \delta_i)^{-1} S^{-1} (SU(\theta_i,\theta_{i+1})^{-1} S^{-1} + o(\Id)) SD( \alpha_i\setminus \tilde\alpha_i)^{-1} S^{-1} \cdot {} \\ \notag
&&
\left( SD(\tilde\alpha_i)^{-1} S^{-1}+o(e^{s^{\frac{1}{3}}\tilde{\mu}^{i}}) \right) \label{eq:HolonomyFormula}
\end{eqnarray}
with respect to a frame induced by natural coordinates. Here $e^{s^{\frac13}\tilde{\lambda}^{i}}$ and $e^{s^{\frac13}\tilde{\mu}^{i}}$ are the largest eigenvalues of $D(\tilde{\delta}_{i})^{-1}$ and $D(\tilde{\alpha}_{i})^{-1}$ respectively.

\begin{rmk} \label{rmk:compare-frames} 
The holonomy with respect to the global frame $\mathcal F$ will only differ by multiplication on the left and on the right by the change of frame between the global coordinate on $\h^2$ and the natural coordinate for $q_s$. These, however, only grow polynomially in $s$, so they do not influence the estimates that follow. 
\end{rmk}
We can then write
$$ \Hol (\tilde c_\gamma) =A(s) + E(s)$$
by setting
\begin{equation} \label{eq:A(s)-def}
A(s) = \prod_{i=1}^\ell SD(\delta_i)^{-1} U(\theta_i, \theta_{i+1})^{-1} D( \alpha_i)^{-1} S^{-1}.
\end{equation}

It is worth emphasizing that the definition of $A(s)$ does not depend on the local constructions around the zeros, and so applies both to the modified path $\tilde{c}_{\gamma}$ and to the original (regular) flat geodesic $c_{\gamma}$.

\begin{lemma}  \label{expansion of A(s)} 
Along any regular geodesic, the following asymptotic expansion holds:
\begin{equation} 
	A(s) = \prod_{i=1}^\ell c_{j_{i},k_{i}}e^{s^{\frac13} ({\mu}^i_{k_i} + {\lambda}^i_{j_i})} S E_{j_i,k_i}({\rm Id} + o(\Id)) S^{-1},
\end{equation}
where $E_{j,k}$ denotes the elementary matrix with 1 in position $(j,k)$ and $c_{j_{i},k_{i}}$ is a non-zero constant. Here $\mu^i_{k_i}=\mu^i$ and $\lambda^i_{j_i}=\lambda^i$ are $s^{-\frac13}$ times the logarithms of the largest eigenvalues of $D(\alpha_i)^{-1}$ and $D(\delta_i)^{-1}$ respectively. 
\end{lemma}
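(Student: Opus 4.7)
The plan is to expand each factor of (\ref{eq:A(s)-def}) to leading order and then multiply. Let me write $P_i := D(\delta_i)^{-1} U(\theta_i,\theta_{i+1})^{-1} D(\alpha_i)^{-1}$, so that $A(s) = \prod_{i=1}^{\ell} S P_i S^{-1}$. A direct computation gives
$$(P_i)_{jk} = \bigl(U(\theta_i,\theta_{i+1})^{-1}\bigr)_{jk}\,\exp\!\bigl(s^{1/3}(\lambda^i_j + \mu^i_k)\bigr).$$
The regularity hypothesis ensures that on each $\alpha_i$ and each $\delta_i$ the three quantities $\Ree\!\int \phi_j$ are pairwise distinct, so the maxima $\lambda^i = \max_j \lambda^i_j$ and $\mu^i = \max_k \mu^i_k$ are realized at unique indices $j_i$ and $k_i$ respectively. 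Consequently the sum $\lambda^i_j + \mu^i_k$ is strictly largest at the single position $(j_i,k_i)$.

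Setting $c_{j_i,k_i} := (U(\theta_i,\theta_{i+1})^{-1})_{j_i,k_i}$ and assuming this entry is nonzero, I would factor out the scalar $c_{j_i,k_i}\,e^{s^{1/3}(\lambda^i_{j_i}+\mu^i_{k_i})}$ from $P_i$. The resulting matrix has a $1$ in position $(j_i,k_i)$, while every other entry is bounded by a constant multiple of
$$\exp\!\bigl(s^{1/3}[(\lambda^i_j - \lambda^i_{j_i}) + (\mu^i_k - \mu^i_{k_i})]\bigr),$$
which tends to $0$ as $s \to \infty$ since both bracketed differences are non-positive, with at least one strictly negative whenever $(j,k) \neq (j_i,k_i)$. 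Hence $P_i = c_{j_i,k_i}\, e^{s^{1/3}(\lambda^i_{j_i}+\mu^i_{k_i})}\bigl(E_{j_i,k_i} + o(\mathrm{Id})\bigr)$ as $s\to\infty$. Conjugating by $S$ and multiplying over $i = 1,\ldots,\ell$ then yields the displayed expansion.

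The hard part will be verifying that $c_{j_i,k_i} \neq 0$. The inverse unipotent $U(\theta_i,\theta_{i+1})^{-1}$ is a product of the elementary unipotents provided by Corollary \ref{find-unitary}, one for each Stokes ray that $\beta_i$ crosses; each such elementary unipotent carries a single nonzero off-diagonal entry, whose position is dictated by the geometry of inscribed/circumscribed triangles in the limiting convex polygon from \cite{DW}. On the other hand, the indices $(j_i,k_i)$ are determined by the directions of the outgoing saddle connection $\delta_i$ and the incoming saddle connection $\alpha_i$ at the zero $p_i$. The claim that the composite inverse unipotent has a nonzero $(j_i,k_i)$-entry is the content of the alignment or \enquote{non-branching} phenomenon: the pivot positions of the successive elementary unipotents must combine so that the maximal row from $D(\delta_i)^{-1}$ and the maximal column from $D(\alpha_i)^{-1}$ are actually linked. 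A careful case analysis, tracking which Stokes rays are crossed by the angular interval $[\theta_i,\theta_{i+1}]$, establishes this matching; this is the subject taken up in Section \ref{sec:no branching}, whose conclusion feeds directly into the expansion above.
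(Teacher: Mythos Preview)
Your proposal is correct and follows essentially the same approach as the paper. The paper's own proof is extremely terse: it simply cites the forward reference Proposition~\ref{prop:non-zero-entry} for the non-vanishing of the $(j_i,k_i)$-entry of $U(\theta_i,\theta_{i+1})^{-1}$, and then says to \enquote{factor each term in the triple product}; you have spelled out exactly what that factoring amounts to, and correctly identified that the substantive content is deferred to Section~\ref{sec:no branching}.
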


\begin{proof}
This is a consequence of 
Proposition \ref{prop:non-zero-entry} below, whose precise statement we defer until later, as it depends on terminology developed in Section \ref{sec:no branching}.
This proposition shows that the element in position $(j_{i},k_{i})$ of $U(\theta_i, \theta_{i+1})^{-1}$ is non-zero.  We then factor each term in the triple product in Equation (\ref{eq:A(s)-def}).  \end{proof}

\begin{rmk}
Two consecutive terms in the above product have the property that $E_{j_i,k_i} \cdot E_{j_{i+1},k_{i+1}} = E_{j_i,k_{i+1}}$ (since $k_i=j_{i+1}$) because the position of the highest eigenvalue only depends on the angle the path makes with the $x$-axis in a natural coordinate and our choices of coordinates keep this angle constant when the coordinate patches cover the same straight path. Note, indeed, that ${\alpha}_{i}$ and ${\delta}_{i+1}$ (with indices intended modulo $\ell$) are part of the same straight line segment $c_{i+1}$.
\end{rmk}

We also give an argument to address the special case in which the the angle of a geodesic segment is in a Stokes direction, extending Lemma \ref{expansion of A(s)} to the case when Proposition \ref{prop:hol_arcs} does not hold.  Define $\tilde c_i = \tilde \delta_i \cup \tilde \alpha_{i-1}$ to be the geodesic segment in $\tilde c_\gamma$ corresponding to this geodesic arc.  Then the estimates of \cite{DW} fail to hold at its endpoints $\tilde c_i\cap\beta_i$ and $\tilde c_i\cap \beta_{i-1}$.  We will modify $\tilde c_i, \beta_i,\beta_{i-1}$ slightly by moving the endpoints.  Recall $ c_i =  \delta_i \cup \alpha_{i-1}$ is the corresponding geodesic segment between the zeros in $\gamma$. 

We rewrite (\ref{eq:A(s)-def}) as 
\begin{equation}\label{eq:A(s)c_i}
A(s) =S D(\alpha_\ell) \left[ \prod_{i=1}^\ell D(c_i)^{-1} U(\theta_i,\theta_{i+1})^{-1} \right] D(\alpha_\ell)^{-1} S^{-1}.
\end{equation} 

\begin{prop} Lemma  \ref{expansion of A(s)} holds for homotopy classes of free loops for which the flat geodesic's saddle connection segments are all either regular or travel along Stokes rays: in other words, if no saddle connection is contained in a wall of a Weyl chamber. 
\end{prop}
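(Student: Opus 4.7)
The plan is to approximate $c_\gamma$ by nearby regular geodesics (those whose saddle connections are neither Stokes-aligned nor Weyl-wall-aligned), apply Lemma \ref{expansion of A(s)} to each approximation, and pass to the limit. Concretely, for each saddle connection $c_i$ lying in a Stokes direction I would perturb the entry and exit angles on $\partial N_{i-1}$ and $\partial N_i$ by a small $\delta>0$, producing a new straight segment $c_i^\delta$ not in a Stokes direction. The resulting modified path $\tilde c_\gamma^\delta$ is freely homotopic to $\tilde c_\gamma$ and has only regular saddle connections; moreover, since the Stokes directions $\{\pi/6+k\pi/3\}$ are disjoint from the Weyl walls $\{k\pi/3\}$, the non-Weyl-wall hypothesis is preserved under any small perturbation.

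Applying Lemma \ref{expansion of A(s)} to $\tilde c_\gamma^\delta$ yields an expression $A^\delta(s)$ of the asserted form. The intersection-number exponents $\mu^i_j$ and $\lambda^i_j$ depend continuously (in fact linearly) on the geometric data of the path, and the non-Weyl-wall hypothesis ensures that the indices $(j_i^\delta,k_i^\delta)$ of the largest eigenvalues of $D(\alpha_i^\delta)^{-1}$ and $D(\delta_i^\delta)^{-1}$ stabilize to the corresponding indices for the unperturbed path. Hence the elementary matrices $E_{j_i,k_i}$ appearing in the formula converge correctly.

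The crucial step, using reformulation \eqref{eq:A(s)c_i}, is to show that each product $D(c_i^\delta)^{-1} U(\theta_i^\delta,\theta_{i+1}^\delta)^{-1}$ is continuous in $\delta$ at $\delta=0$. I expect the main obstacle to be precisely this: as $\delta$ crosses $0$, an endpoint of a perturbed arc $\beta_i^\delta$ may sweep across a Stokes ray and cause a jump in the associated unipotent, and this discontinuity in $U$ must be exactly compensated by the continuous change in $D(c_i^\delta)^{-1}$, since the total holonomy is a geometric invariant independent of the auxiliary choice of where the arc meets the straight segment. I would verify this compensation by writing the factors in terms of the \tit frames $F_T$ and invoking the explicit formulas for the jump unipotents from \cite{DW}, showing that clockwise and counterclockwise perturbations yield the same leading-order $A(s)$. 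Once this continuity is established, Proposition \ref{prop:non-zero-entry} applied to the regular approximations passes to the limit and produces the non-vanishing of the $(j_i,k_i)$-entry required to complete the expansion.
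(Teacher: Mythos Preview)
Your overall strategy---perturb Stokes-aligned saddle connections and apply Lemma \ref{expansion of A(s)} to the regular approximation---is the same as the paper's. However, your proposed compensation mechanism is incorrect, and this is a genuine gap.

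You claim the jump in $U(\theta_i^\delta,\theta_{i+1}^\delta)$ across a Stokes ray is compensated by the continuous change in $D(c_i^\delta)^{-1}$. But $D(c_i^\delta)^{-1}$ is defined via the periods $\int_{c_i^\delta}\phi_j$, and since each $\phi_j$ is a closed one-form and the broken path $c_i^\delta$ (radial segment, then $\tilde c_i^\delta$, then radial segment) is homotopic to $c_i$ rel endpoints (the zeros $p_{i-1},p_i$), these periods are \emph{exactly} equal to $\int_{c_i}\phi_j$, independent of $\delta$. Thus $D(c_i^\delta)^{-1}=D(c_i)^{-1}$ and cannot absorb any jump. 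The unipotent $U$ genuinely changes when an arc endpoint crosses a Stokes ray, and the product in \eqref{eq:A(s)c_i} is not continuous in $\delta$ in the way you suggest.

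What actually rescues the argument is twofold. First, one must choose the \emph{sign} of the perturbation at each endpoint so that the modified arcs $\beta_i^\eta$ and $\beta_{i-1}^\eta$ each subtend an angle strictly greater than $\pi$; this is required for Proposition \ref{prop:non-zero-entry} to apply to the perturbed configuration, and you have not addressed this constraint. Second, when either sign is admissible, one needs Lemma \ref{lemma:unchanged-stokes-line}: the single entry $(j_i,k_i)$ of $U^{-1}$ that governs the leading term---the one pairing the largest-eigenvalue directions---is itself invariant under crossing the Stokes line, even though other entries of $U$ jump. Hence the constants $c_{j_i,k_i}$ in the expansion are well-defined and nonzero regardless of the perturbation direction. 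You should replace the continuity/compensation heuristic with these two ingredients.
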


\begin{proof}
It suffices to address the case of a single saddle connection along a Stokes ray.  Each endpoint of the $\tilde c_i$ is in the flat coordinate $\epsilon e^{i\theta}$ for $\theta$ a Stokes direction. We modify the angle by $\pm\eta$ for a small positive constant $\eta$ to avoid these directions.  So define $\beta_i^\eta$ to be the new arc formed by replacing the endpoint  $\epsilon e^{i\theta}$ by $\epsilon e^{i(\theta\pm\eta)}$, and similarly define $\beta_{i-1}^\eta$. Define $\tilde c_i^\eta$ to be the geodesic path between these endpoints of $\beta_i^\eta$ and $\beta_{i-1}^\eta$.  By choosing $\eta$ small enough we can ensure the straight line homotopy between $\tilde c_i$ and $\tilde c_i^\eta$ does not cross any other zeros of the cubic differential. 

\begin{figure}
\begin{center}
\includegraphics{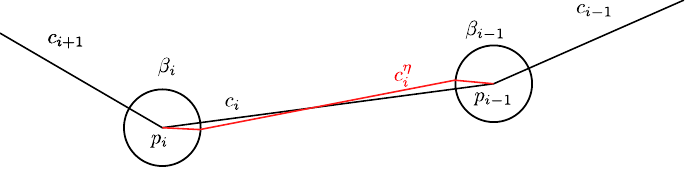}
\caption{Path modified by $\eta$.}
\label{broken-path-figure}
\end{center}
\end{figure}

In certain cases we also need to specify the signs $\pm\eta$.  At each zero along the geodesic $\gamma$, the incoming and outgoing rays must make an angle of $\ge\pi$ with respect the flat metric, when measured in clockwise and counterclockwise directions around the zero.  Proposition \ref{prop:non-zero-entry} below requires that each arc begins and ends away from a Stokes ray and must subtend an angle $>\pi$ (and so at least 3 Stokes rays will be transversed by the arc). For each endpoint of $\tilde c_i$ choose $\pm\eta$ so that the arcs $\beta_i^\eta$ and $\beta_{i-1}^\eta$ both subtend an angle $>\pi$.  This is possible since the total angle around a zero of order $k$ is $2\pi + 2\pi k/3$.  Note that in some cases we are free to choose either $+\eta$ or $-\eta$; then there is a different holonomy matrix along the arc depending on the sign.  Lemma \ref{lemma:unchanged-stokes-line} below shows that this matrix leaves the relevant entries unchanged, in terms of the leading order terms. 

Define $ c_i^\eta$ to be the union of $\tilde c_i^\eta$ and the two radial paths from the zeros $p_{i-1},p_i$ to the endpoints of $\tilde c_i^\eta$.  See Figure \ref{broken-path-figure}.  Now by Proposition \ref{hol-rays} and Remark \ref{rmk:hol-diagonal}, the contribution for $c_i^\eta$ in (\ref{eq:A(s)c_i}) is given by 
$$ 
 S{\rm diag}(e^{s^\frac13\nu_1^{i,\eta}}, e^{s^\frac13\nu_2^{i,\eta}}, e^{s^\frac13\nu_3^{i,\eta}}) S^{-1} + o(e^{s^{\frac{1}{3}}{\nu}^{i, \eta}})
$$
where 
$$\nu_j^{i,\eta} = -2^{\frac23} \Ree\left(\int_{ c_i^\eta} \phi_j \right) = -2^{\frac23} \Ree\left(\int_{c_i} \phi_j \right) = \nu_j^i = \mu_j^{i-1} + \lambda_j^{i},$$
since $\phi_j$ is closed and $c_i^\eta$ is homotopic to $c_i$. This shows as above that the conclusion of Lemma \ref{expansion of A(s)} holds. 

Note we call the entire modified path $\tilde c^\eta$.  It is obtained from $\tilde c$ by replacing, for each appropriate $i$, $\tilde c_i$ by $\tilde c_i^\eta$ and $\beta_i$ by $\beta_i^\eta$, etc. 
\end{proof}

\section{No branching} \label{sec:no branching}
In this section we exploit the geometry of the convex regular polygon to which a hyperbolic affine sphere with cubic differential $z^{k}dz^{3}$ projects in order to analyze the non-zero entries of the product of unipotent matrices $U(\theta_{i}, \theta_{i+1})^{-1}$ of the previous section.  
The key result, Proposition~\ref{prop:non-zero-entry}, asserts that the product of unipotents in the holonomy formula \eqref{eq:HolonomyFormula} -- that connect the holonomy of segments that come into a zero with the holonomy of segments that leave a zero -- have an entry that allows the largest eigenvalues of those holonomies to multiply. This is crucial for the form of the formula \eqref{eq:HolonomyFormula}. 

Proposition \ref{prop:non-zero-entry} below will be used later to show that the induced map from the Riemann surface to the real building given by the asymptotic cone is  locally injective near the zeros of the cubic differential, and thus can have no branching behavior.  The corresponding phenomenon in the real tree case is called ``folding,'' which does occur in some situations (e.g. \cite{DDW00}, \cite{Wolf:minimalgraph}). 

Choose a local coordinate $w$ on $\C\setminus\{0\}$ so that $q=\,dw^3$.  Consider $f$ the corresponding embedding of the Riemann surface into $\R^3$ whose image is a hyperbolic affine sphere with Pick differential $q$, and consider the frame $F = (f,f_w,f_{\bar w})$ of the affine sphere.  Let $f_T$, $F_T$ be the corresponding embedding and frame for a standard \c{T}i\c{t}eica surface (as described in Example~\ref{ex: Titeica}). The osculation map $G(z) = F(z) F_T^{-1}(z)$ then has limits $SL_- S^{-1},SL_0S^{-1},SL_+S^{-1}$ along rays $\gamma(t)=te^{i\theta}$ of angle $\theta$ for $\theta \in (-\pi/2,-\pi/6),$ $(-\pi/6,\pi/6), (\pi/6,\pi/2)$ respectively (\cite{DW}).  These matrices determine the construction of the convex polygon $P$ onto which the affine sphere $f(\C)$ projects. Let us summarize the main step of the construction. We label the vertices of $P$ as $r_0,r_1,\dots,r_{n-1}\in\RP^2$ with indices in $\Z_n$.  Let $\overline{r_jr_{j+1}}$ denote the line connecting $r_j$ and $r_{j+1}$, and let $e_j$ denote the edge of $P$ from $r_j$ to $r_{j+1}$.  Three successive vertices $r_{j-1},r_j,r_{j+1}$ form an inscribed triangle in $P$ around the vertex $r_j$.  Also define points $q_j = \overline{r_{j-1}r_j} \cap \overline{r_{j+1} r_{j+2}}$ so that $r_j,q_j,r_{j+1}$ are the vertices of a circumscribed triangle $T_j$ of $P$ centered around the edge $e_j$.  See Figure \ref{polygon-vertices-figure}. 

\begin{figure}
\begin{center}
\hspace*{-2.4in}
\includegraphics[scale=0.5]{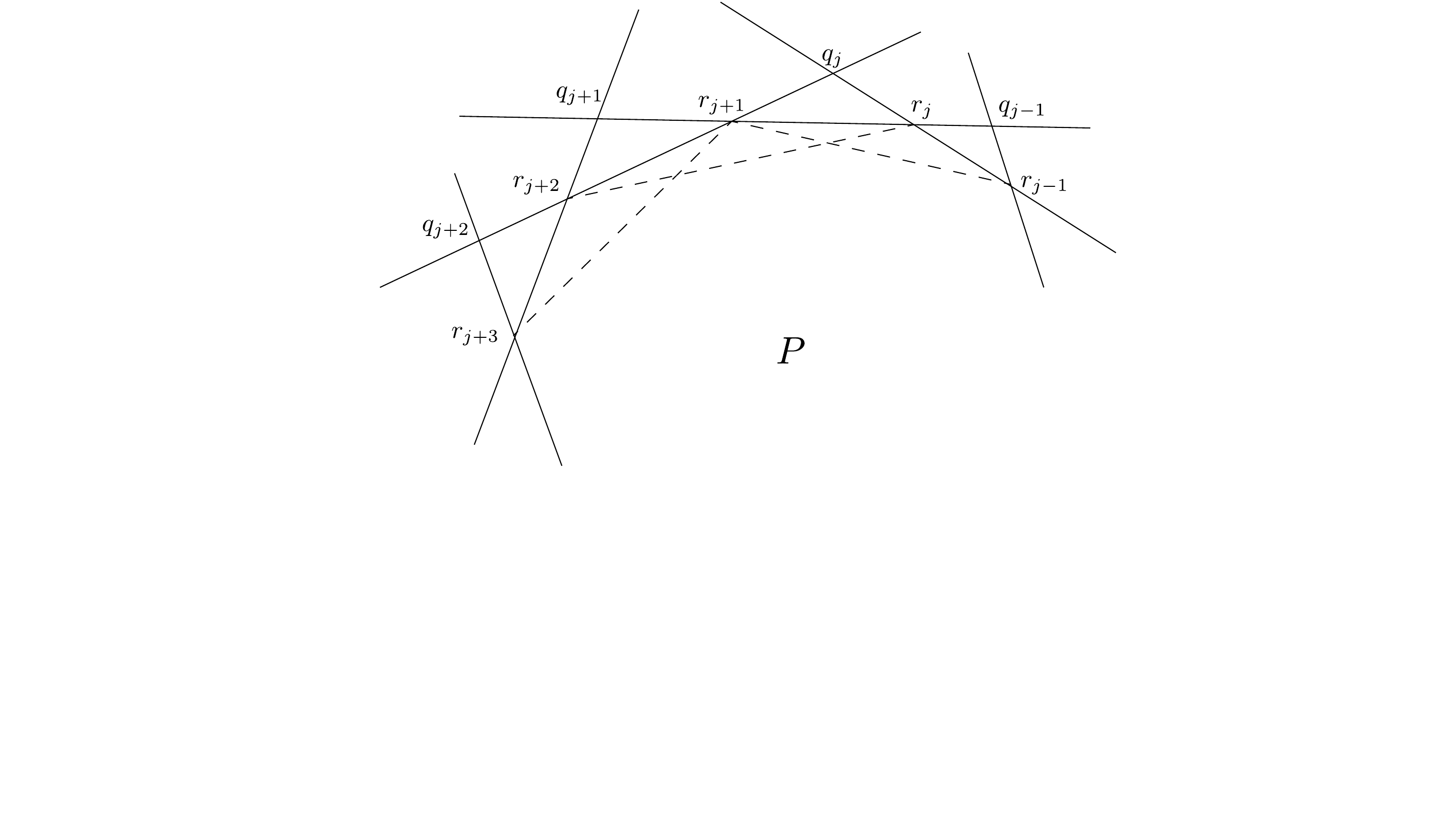}
\vspace*{-2.5in}
\caption{Inscribed and circumscribed triangles. Here each inscribed triangle includes a dotted edge and each circumscribed triangle contains one edge of the polygon and extends the immediate neighbors of that edge to meet a point $q_j$ exterior to the polygon.}
\label{polygon-vertices-figure}
\end{center}
\end{figure}

\begin{prop}\label{prop:convexity} In the above setting the following holds:
\begin{enumerate}
\item 
$r_j \in \overline{r_ir_{i+1}}$ if and only if $j=i$ or  $j=i+1$. 
\item
$q_j\in \overline{r_ir_{i+1}}$ if and only if $j= i-1$ or $j=i+1$. 
\end{enumerate}
\end{prop}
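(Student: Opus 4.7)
The plan is to invoke the specific regularity of the polygon $P$. Recall that by \cite{DW} the convex polygon associated to the polynomial cubic differential $z^k\,dz^3$ is the regular convex $(k+3)$-gon. Writing $n := k+3$, this regularity will give me two key properties: no three distinct vertices of $P$ are collinear, and all $n$ edges (viewed as extended lines) are tangent to a common inscribed circle centered at the center of $P$.

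For part (1), I would argue directly from strict convexity: since in a regular polygon any line contains at most two of the vertices $r_0, \dots, r_{n-1}$, having $r_j \in \overline{r_i r_{i+1}}$ with $j \neq i, i+1$ would produce three collinear vertices, which is impossible. The reverse direction is of course trivial.

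For part (2), the ``if'' direction is immediate from the definition $q_j = \overline{r_{j-1} r_j} \cap \overline{r_{j+1} r_{j+2}}$: taking $j = i-1$ or $j = i+1$ places $q_j$ directly on the line $\overline{r_i r_{i+1}}$. For the converse, I would assume $q_j \in \overline{r_i r_{i+1}}$ with $j \notin \{i-1, i+1\}$ and derive a contradiction. By definition, the three edge-lines $\overline{r_i r_{i+1}}$, $\overline{r_{j-1} r_j}$, $\overline{r_{j+1} r_{j+2}}$ all contain $q_j$, hence are concurrent there. The first step is to verify they are pairwise distinct: by part (1) two distinct edges of $P$ cannot lie on the same line (else four vertices would be collinear, forcing the edges to coincide as indexed edges), and the hypothesis $j \notin \{i-1, i+1\}$ rules out the remaining index coincidences (for instance $\overline{r_i r_{i+1}} = \overline{r_{j-1} r_j}$ would force $j = i+1$). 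Once this distinctness is in hand, I would finish by invoking the elementary fact that three distinct tangent lines to a circle cannot be concurrent: from an external point exactly two tangents can be drawn, from a point on the circle only one, and from an internal point none. Since the three edge-lines in question are all tangent to the inscribed circle of the regular polygon $P$, their concurrence at $q_j$ is impossible.

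The main obstacle I anticipate is simply the careful bookkeeping of the degenerate coincidences among the three edge-lines; by reducing to the regular polygon case up front this becomes a short index check, but for a general convex polygon one would have to rule out accidental concurrences by a less structural argument.
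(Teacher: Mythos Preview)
Your argument is correct for the regular polygon, and it is a genuinely different route from the paper's. The paper proves part~(2) by a pure convexity argument: assuming $q_j\in\overline{r_ir_{i+1}}$ with $j\notin\{i-1,i,i+1\}$, it observes that $P$ lies in the intersection of the circumscribed triangle $T_j$ with the closed half-plane $H_i$ bounded by $\overline{r_ir_{i+1}}$, and argues that this region is too small to contain both edges $e_{j-1}$ and $e_{j+1}$. Your approach instead exploits the inscribed circle of the regular $n$-gon and the elementary fact that at most two tangent lines to a circle pass through any given point; once you have checked that the three edge-lines $\overline{r_ir_{i+1}}$, $\overline{r_{j-1}r_j}$, $\overline{r_{j+1}r_{j+2}}$ are pairwise distinct, their concurrence is immediately ruled out.

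The trade-off is scope. The paper's argument uses only convexity and therefore applies to every convex polygon $P$, which is how the proposition is later invoked (Proposition~\ref{prop:non-zero-entry} is stated for an arbitrary convex polygon). Your argument is cleaner and avoids any case analysis on the geometry of $T_j\cap H_i$, but it genuinely requires an inscribed circle tangent to every edge, so it proves the statement only in the regular case. For the purposes of this section that is in fact enough, since the extension to general polygons is carried out afterwards by continuity (Proposition~\ref{prop:positive-entries}); but you should flag explicitly that you are proving the regular case and note where the general case is recovered. One small omission: you should also dispose of the case $j=i$ (your tangent-line argument actually covers it, since the three edge-lines $\overline{r_ir_{i+1}}$, $\overline{r_{i-1}r_i}$, $\overline{r_{i+1}r_{i+2}}$ are still distinct, but it deserves a sentence).
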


In the case of $P$ being a quadrilateral, (i.e. $2n=8$ triangles and degree $k=1$ of the cubic differential), the formulae remain valid, though we note that $q_0=q_2$ and $q_{-1}=q_1=q_3$.

\begin{proof}
Statement (1) is obvious from the convexity of $P$. 

To prove statement (2), note we need only prove the "only if" part.  So assume $q_j \in \overline{r_ir_{i+1}}$.  As $q_i,r_i,r_{i+1}$ are the vertices of a triangle, we see $j\neq i$. To find a contradiction, assume $j<i-1$ or $j>i+1$ and that $q_j\in\overline{r_i r_{i+1}}$. Recall $q_j =  \overline{r_{j-1}r_j} \cap \overline{r_{j+1} r_{j+2}}$.  By convexity, $T_j \supset P$.  Since $P$ is a convex polygon, it is the intersection of $n$ closed half-planes $H_k$, each bounded by $\overline{r_kr_{k+1}}\supset e_k$.  Since $q_j \in \overline{r_ir_{i+1}}$, we see $P$ is a subset of the smaller triangle $T_j \cap H_i$, which cannot contain both $e_{j-1}$ and $e_{j+1}$. This is a contradiction. 
\end{proof}

The columns of the matrices $L_{-}$, $L_{+}$ project to the vertices of a circumscribed triangle of $P$, whereas the columns of $L_{0}$ project to the vertices of an inscribed triangle. Moreover, the middle vertex of an inscribed triangle is always obtained as $L_{0}e_{i}$ where $Se_{i}$ is the eigenvector corresponding to the highest eigenvalue of $F_{T}(z)$. The unipotent matrices arise then by taking the products $L_{-}^{-1}L_{0}$ and similar. We determine these unipotent matrices explicitly in the case $P$ is regular.  Choose a coordinate frame in $\R^{3}$ in which $r_{j}$ takes the form
$$ r_j = \left[\cos \frac{2\pi j}n, \sin \frac{2\pi j} n, 1\right]^{t}.$$ 
We then compute that 
$$ q_0 = \left[\frac12 + \frac12\sec\frac{2\pi}n, \frac12 \tan\frac{2\pi}n,1\right]^{t},$$
while $q_j$ can be computed as $r_jq_0$, where we view $r_j,q_0\in \C$. In other words, in the natural inhomogeneous coordinate chart $\{[x,y,1]^{t}\}$ in $\RP^2$, $q_j$ is found by rotating $q_0$ by an angle of $2\pi j/n$. (Note that the formulas respect the circular indexing, i.e. $r_{-1}=r_{n-1}$. Also, the case of quadrilateral $P$ (i.e. $n=4$) requires special treatment here as we cannot locate $r_j$ and $q_j$ within the single inhomogeneous chart; here we set $q_0=[1,1,0]^t=q_2$ and $q_1=[1,-1,0]^t=q_3$.)\\

We continue towards our goal (cf.\ Lemma~\ref{expansion of A(s)}, equation~\eqref{eq:A(s)-def} and Figure~\ref{fig:subpaths}) of understanding the effect of multiplying the unipotents that are encountered when a path around a zero crosses Stokes lines (as well as the changes of order of eigenvalues for the standard \c{T}i\c{t}eica comparison frame $F_T$ as the path for the holonomy crosses walls of Weyl chambers). In particular, we are trying to compute the largest term of the asymptotic holonomy along our geodesic as we pass from from one saddle connection $c_i$ to the next $c_{i-1}$ along an arc $\beta_{i-1}$ around a zero. Our aim in this section is to ensure that the largest terms in the holonomy matrix along $c_i$ and $c_{i-1}$  multiply via a positive term in the product of unipotents along $\beta_{i-1}$.

Now, the paper \cite{DW} provides a scheme of determining the projective transformations of triangles (across Stokes lines) to form the polygon, as well as the order of the largest eigenvalue corresponding to each vertex. See the tables on pages 1771-1772 in \cite{DW}. 

The diagram below organizes the information we need in three rows. We will describe this in detail, but here is  an initial guide to the arrangement:  the first row is meant to track the vertices of the triangle for the best approximating \c{T}i\c{t}eica surface for the sector, the second row tracks the relative magnitude of the eigenvalues for the approximating \c{T}i\c{t}eica frame $F_T$ against the listed order of the vertices in the first row, while the third row tracks the angles in a standard flat coordinate.
\begin{equation} \label{eq:unipotent_scheme}
\begin{array}{c@{\hspace{-3pt}}c@{\hspace{-3pt}}c@{\hspace{-3pt}}c@{\hspace{-3pt}}c@{\hspace{-3pt}}c@{\hspace{-3pt}}c@{\hspace{-3pt}}c@{\hspace{-3pt}}c@{\hspace{-3pt}}c@{\hspace{-3pt}}c@{\hspace{-3pt}}c@{\hspace{-3pt}}c}
\mapsto & 
 \left( \begin{array}{c} r_{-1} \\ r_0 \\ r_1 \end{array} \right) &\mapsto &
 \left( \begin{array}{c} q_0 \\ r_0 \\ r_1 \end{array} \right) &\mapsto & 
\left( \begin{array}{c} r_2 \\ r_0 \\ r_1 \end{array} \right) & \mapsto &
\left( \begin{array}{c} r_2 \\ q_1 \\ r_1 \end{array} \right) & \mapsto &
\left( \begin{array}{c} r_2 \\ r_3 \\ r_1 \end{array} \right) & \mapsto &
\left( \begin{array}{c} r_2 \\ r_3 \\ q_2 \end{array} \right) & \mapsto 
\\[.3in]
\begin{array}{c} m \\ \ell \\ s \end{array} &
\Bigg| & 
\begin{array}{c} s \\ \ell \\ m \end{array} &
\Bigg| & 
\begin{array}{c} s \\ m \\ \ell \end{array} &
\Bigg| & 
\begin{array}{c} m \\ s \\ \ell \end{array} &
\Bigg| & 
\begin{array}{c} \ell \\ s \\ m \end{array} &
\Bigg| & 
\begin{array}{c}  \ell \\ m \\s \end{array} &
\Bigg| & 
\begin{array}{c}  m \\ \ell \\s \end{array} 
\\[.3in]
-\frac\pi6 & 0 & \frac\pi6 & \frac\pi3 & \frac\pi2 & \frac{2\pi}3 & \frac{5\pi}6 & \pi & \frac{7\pi}6 & \frac{4\pi}3 & \frac{3\pi}2 & \frac{5\pi}3 & \frac{11\pi}6 
\end{array}
\end{equation}

Here each $\mapsto$ in the first row refers to crossing a Stokes line, which in standard flat coordinates are at angles $-\frac\pi6, \frac\pi6, \frac\pi2, \frac{5\pi}6,\dots$ as noted in the third row, while each vertical line in the second row refers to crossing a wall of a Weyl chamber, at angles $0,\frac\pi3,\frac{2\pi}3,\pi,\dots$ (in the third row).  The $s,m,\ell$ in the second row refer to the smallest, medium and largest eigenvalue respectively of the frame $F_{T}(z)$. 

For example, in the angular sector $0 < \theta < \frac{\pi}{6}$, measured in a natural coordinate, the affine sphere over $P$ is asymptotic to a unique \c{T}i\c{t}eica surface which projects onto the inscribed triangle $T$ whose vertices are represented by the first vector $(r_{-1}, r_{0}, r_{1})$ in the diagram. The vertices of the triangle are the projective classes of the eigenspaces of $F_{T}(z)$. We can say more: the eigenvalues appear in the order $(s,\ell, m)$ relative to the ordering $(r_{-1}, r_{0}, r_{1})$ of the vertices. When we cross the Stokes line at $\theta=\pi/6$, the \c{T}i\c{t}eica surface best approximating the affine sphere over $P$ flips and projects to the circumscribed triangle $T'$ of vertices $(q_{0}, r_{0}, r_{1})$. However, the eigenvalues of $F_{T'}(z)$ still appear in the same ordering $(s,\ell,m)$. 
%This changes to $(s,m,\ell)$ only after crossing the angle $\pi/3$. This crossing, though, leaves the triangle $T'$, and hence the triple $(q_{0}, r_{0}, r_{1})$ invariant.  
This changes to $(s,m,\ell)$ only after crossing the wall of the Weyl chamber at the angle $\pi/3$.  At the wall the largest two eigenvalues coincide, but upon crossing the wall, the largest and middle eigenvalues swap. This crossing, though, leaves the triangle $T'$, and hence the triple $(q_{0}, r_{0}, r_{1})$ invariant.

There is of course more than one projective transformation which takes a triangle to a triangle, but the ones we are interested in are determined by the conditions that they are unipotent and that they fix the vector lifts to $\R^3$ of the relevant $r_j$'s fixed by the projective transformation.  The next proposition shows, at least in the regular case, that these vector lifts of $r_j,q_j$ can be done globally, and that the relevant linear transformations corresponding to crossing several Stokes lines near a zero can be computed in terms of simple changes of bases (in contrast to a more complicated scheme of multiplying out several transformations). 

\begin{prop} \label{prop:regular-polygon-lift}
Let $P$ be a regular convex polygon as above.  There are vector lifts $\vec r_j, \vec q_j\in \R^3$ of the points $r_j,q_j\in \RP^2$ for which the linear transformations lifting the corresponding triangle transformations are all unipotent.  The choice of such a set of lifts is unique up to a nonzero multiplicative constant. 
\end{prop}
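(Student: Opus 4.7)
The plan is to (i) translate the unipotent requirement into a linear-algebraic condition on vector lifts, (ii) use the $\Z_n$-rotational symmetry of the regular polygon to reduce to a single local check, and (iii) verify that check by explicit computation in coordinates.

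For step (i), I observe that a projective transformation carrying a triangle $(v_1,v_2,v_3)$ to $(v_1,v_2,v_3')$ (fixing two vertices) lifts to a unipotent endomorphism of $\R^3$ if and only if vector representatives can be selected with
\[
\vec v_3' - \vec v_3 \in \mathrm{span}(\vec v_1, \vec v_2).
\]
Indeed, the lifted map fixes $\vec v_1, \vec v_2$ and sends $\vec v_3 \mapsto a\vec v_1 + b\vec v_2 + c\vec v_3$; unipotence forces all eigenvalues to equal $1$, so $c = 1$. Applying this to each pair of consecutive transitions in scheme \eqref{eq:unipotent_scheme} in which $q_j$ appears (first entering then leaving the current triangle) yields
\[
\vec q_j - \vec r_{j-1} \in \mathrm{span}(\vec r_j, \vec r_{j+1}), \qquad \vec r_{j+2} - \vec q_j \in \mathrm{span}(\vec r_j, \vec r_{j+1}).
\]
Since $q_j$ lies projectively on $\overline{r_{j-1} r_j}$, any lift has the form $\vec q_j = a\vec r_{j-1} + b \vec r_j$; linear independence of $\{\vec r_{j-1}, \vec r_j, \vec r_{j+1}\}$ and the first inclusion force $a = 1$, so $\vec q_j = \vec r_{j-1} + \beta_j \vec r_j$ with $\beta_j$ uniquely determined by the projective position of $q_j$. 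The second inclusion then collapses the pair to the single condition
\[
(\star)_j: \qquad \vec r_{j+2} - \vec r_{j-1} \in \mathrm{span}(\vec r_j, \vec r_{j+1}),
\]
one linear equation per $j \in \Z_n$ on the scalar lifts of the $r_j$.

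For steps (ii)--(iii), let $R \in \mathrm{GL}(3,\R)$ be a linear lift of the order-$n$ projective rotation cyclically permuting the $r_j$ (concretely, rotation about the center of the polygon in the affine chart). Setting $\vec r_j := R^j \vec r_0$ gives a $\Z_n$-equivariant family, so $(\star)_0$ propagates to every $(\star)_j$. Taking the standard representatives $\vec r_j = (\cos(2\pi j/n), \sin(2\pi j/n), 1)^T$ and expanding $\vec r_2 = a \vec r_{-1} + b \vec r_0 + c \vec r_1$, elementary sum-to-product identities with $\alpha = 2\pi/n$ give $c - a = 2\cos\alpha$ and $a + c = 4\cos^2(\alpha/2)$, so $a = 1$. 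Hence $\vec r_2 - \vec r_{-1} = b\vec r_0 + c\vec r_1 \in \mathrm{span}(\vec r_0, \vec r_1)$, establishing $(\star)_0$. The $\vec q_j$ are then forced by $\vec q_j = \vec r_{j-1} + \beta_j \vec r_j$, which completes existence.

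Uniqueness follows once the symmetric ansatz is justified: writing $\vec r_j = t_j \vec e_j$ for the canonical $\vec e_j$ above, the identity $a = 1$ reduces $(\star)_j$ to the recursion $t_{j+2} = t_{j-1}$, i.e.\ $t_{j+3} = t_j$. Imposed around $\Z_n$, this isolates the $\Z_n$-equivariant solutions, which form a single $\R^\times$-orbit given by a choice of $\vec r_0$; the $\vec q_j$ are then determined, leaving precisely one multiplicative constant. The main technical obstacle is this last step: verifying that the recursion together with the prescribed projective positions and the closing-up condition around the polygon pins down the solution up to a single scalar (rather than to a larger family), which is handled by exhibiting the symmetric solution above and appealing to the rigidity of the linear system under the equivariance normalization.
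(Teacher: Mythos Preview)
Your existence argument is correct and follows the same idea as the paper's proof: take the $\Z_n$-symmetric lifts $\vec r_j=(\cos(2\pi j/n),\sin(2\pi j/n),1)^t$ and verify the unipotence conditions. The paper simply records these lifts (together with an explicit $\vec q_0$) and asserts that the conditions can be checked by hand; you instead translate unipotence into the span condition $(\star)_j$ and use the rotational symmetry to reduce to a single instance. These are the same computation, though your framing is more transparent about why the symmetric choice works.

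Your uniqueness argument, however, has a real gap. You correctly show that the full set of unipotence constraints is \emph{equivalent} to the family $\{(\star)_j:j\in\Z_n\}$, which in turn reduces to the recursion $t_{j+3}=t_j$ on the scalars $t_j$. But this recursion on $\Z_n$ forces all $t_j$ equal only when $\gcd(3,n)=1$; when $3\mid n$ the solution space of scalars is $3$-dimensional, not $1$-dimensional. Your closing paragraph flags this as ``the main technical obstacle'' and invokes ``rigidity of the linear system under the equivariance normalization,'' but since you have already shown the linear system is exactly $\{(\star)_j\}$, there is no further constraint available to invoke. Concretely, for the regular hexagon ($n=6$) the choice $(t_0,\dots,t_5)=(1,2,1,1,2,1)$ satisfies every $(\star)_j$, so uniqueness up to a single scalar genuinely fails. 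The paper's own proof does not address uniqueness at all; the uniqueness clause as stated seems to require the extra hypothesis $\gcd(3,n)=1$, though only the existence part is used later in the section.
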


\begin{proof}
We may easily check that 
$$\vec r_j = \left(\cos \frac{2\pi j}n, \sin \frac{2\pi j}n, 1\right)^{t} \quad \mbox{and} \quad \vec q_0 = \left(-\cos \frac {2\pi}n - 1, - \sin \frac{2\pi}n, - 2\cos\frac{2\pi}n\right)^{t}$$ satisfy the conditions, with $\vec q_j$ again being defined by rotating $\vec q_0$ by an angle of $2\pi j/n$. 
\end{proof}

(Again, we add a comment for the case of $P$ a quadrilateral ($n=4$): while $q_0=q_2$ and $q_1=q_3$, the lifts satisfy $\vec q_0=-\vec q_2$ and $\vec q_1=-\vec q_3$.)\\

It follows that the matrix $U(\theta_{i}, \theta_{i+1})^{-1}$ represents the change of basis between $\mathcal{B}_{i}=\{v_{1}, v_{2}, v_{3}\}$ and $\mathcal{B}_{i+1}=\{w_{1}, w_{2}, w_{3}\}$ where $v_{j}$ and $w_{j}$ project to the vertices of inscribed or circumscribed triangles of the regular polygon $P$. Therefore, if $v_{k_i}$ is the vector corresponding to the highest eigenvalue of $D({\alpha}_{i})^{-1}$ and $w_{j_i}$ is the vector corresponding to the highest eigenvalue of $D({\delta}_{i})^{-1}$, the entry $(j_i,k_i)$ of $U(\theta_{i}, \theta_{i+1})^{-1}$ is nonzero if and only if $v_{k_i}$ has a component along $w_{j_i}$ in the basis $\mathcal{B}_{i+1}$.  It is important to note, because of the orientations of the paths near a given zero $p_i$, that the highest eigenvalue of $D(\alpha_i)^{-1}$ is the highest eigenvalue of $F_T$, while the highest eigenvalue of $D(\delta_i)^{-1}$ is the lowest eigenvalue of $F_T$.

As promised above in Lemma \ref{expansion of A(s)}, we now prove 

\begin{prop}\label{prop:non-zero-entry} Consider any convex polygon $P$. If the highest eigenvalue of $D({\delta}_{i})^{-1}$ is in position $j_i$ and the highest eigenvalue of $D({\alpha}_{i})^{-1}$ is in position $k_i$, then the $(j_i,k_i)$-entry of $U(\theta_{i}, \theta_{i+1})^{-1}$ is not zero.
\end{prop}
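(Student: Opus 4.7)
I translate the claim into a non-collinearity statement in $\RP^2$ for the polygon vertices, and then verify it by case analysis, using Proposition~\ref{prop:convexity} and the triangle-transition scheme~\eqref{eq:unipotent_scheme}.

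As observed just above the proposition, $U(\theta_i,\theta_{i+1})^{-1}$ is the change-of-basis matrix from $\mathcal{B}_i=(v_1,v_2,v_3)$ to $\mathcal{B}_{i+1}=(w_1,w_2,w_3)$, whose columns project to the vertices of an inscribed or circumscribed triangle of $P$ at angles $\theta_i$ and $\theta_{i+1}$. Writing $v_{k_i}=\sum_\ell c_\ell w_\ell$, the $(j_i,k_i)$-entry equals $c_{j_i}$, which vanishes precisely when $[v_{k_i}]\in\RP^2$ lies on the line $L$ through the projections of $\{w_\ell : \ell\neq j_i\}$. Thus the task reduces to showing $[v_{k_i}]\notin L$. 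To identify $[v_{k_i}]$ and $[w_{j_i}]$ explicitly, I use the formulas from Proposition~\ref{hol-rays} together with the remark, just before the proposition, that the largest eigenvalue of $D(\alpha_i)^{-1}$ equals the largest eigenvalue of $F_T$ along $\alpha_i$, while the largest eigenvalue of $D(\delta_i)^{-1}$ equals the smallest eigenvalue of $F_T$ along $\delta_i$. Reading from the scheme~\eqref{eq:unipotent_scheme}, this pinpoints which $r_m$ or $q_m$ is $[v_{k_i}]$ (respectively $[w_{j_i}]$) as a function of $\theta_i$ (respectively $\theta_{i+1}$).

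Since $\mathcal{B}_{i+1}$ is either an inscribed triangle $(r_{a-1},r_a,r_{a+1})$ or a circumscribed triangle $(r_a,q_a,r_{a+1})$, the line $L$ falls into one of three types: an edge line $\overline{r_c r_{c+1}}$ of $P$, the line supporting an edge (for instance $\overline{r_a q_a}=\overline{r_{a-1} r_a}$, which after reindexing is again an edge line), or a short diagonal $\overline{r_{a-1}r_{a+1}}$. For the edge-line cases, Proposition~\ref{prop:convexity} enumerates exactly the vertices $r_c,r_{c+1},q_{c-1},q_{c+1}$ lying on $L$. For the diagonal case, strict convexity of $P$ ensures that no other $r_j$ lies on $L$, since the vertices of a convex polygon are in general projective position, and no $q_j$ can lie on $L$ either, since each $q_j$ lies outside $P$ while the diagonal is interior to $P$. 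So in each case there is a short explicit list of forbidden vertices for $[v_{k_i}]$, and the identification from the previous step shows that $[v_{k_i}]$ is never among them.

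The main obstacle is the organization of the case analysis. Combining the combinatorial choices---namely $\mathcal{B}_i$ and $\mathcal{B}_{i+1}$ inscribed or circumscribed, and the positions within each triangle of the largest and smallest eigenvalues of $F_T$---produces a finite but nontrivial number of subcases. The crucial geometric input that makes each subcase work is that the arc $\beta_i$ subtends an angle strictly greater than $\pi$ around the zero, as required in the proof of Lemma~\ref{expansion of A(s)}. This forces $\mathcal{B}_i$ and $\mathcal{B}_{i+1}$ to be well-separated rotations of triangles in $P$, so that $[v_{k_i}]$ lies in a cyclic sector of the polygon different from that of the forbidden vertices associated to $L$. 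Tracking this separation carefully across all the cases is the bulk of the work; each individual subcase then reduces to a direct application of Proposition~\ref{prop:convexity}.
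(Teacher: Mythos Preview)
Your plan is sound and matches the paper's approach: reduce the nonvanishing of the $(j_i,k_i)$-entry to a non-incidence statement in $\RP^2$, invoke Proposition~\ref{prop:convexity} together with the scheme~\eqref{eq:unipotent_scheme}, and use the fact that the arc subtends angle $\ge\pi$ (hence at least three Stokes lines are crossed) to separate the incoming and outgoing triangles. One organizational point worth noting: the paper structures the case analysis by the \emph{number of Stokes crossings} (three, four, or at least five) rather than by the type of the line $L$, which turns out to be cleaner---after five or more flips the incidence check is automatic from Proposition~\ref{prop:convexity}, so only the three- and four-flip cases need individual attention, and in those cases one reads directly from~\eqref{eq:unipotent_scheme} that the relevant entry is either a diagonal entry of the change-of-basis (three flips) or the coefficient of $\vec r_j$ along $\vec q_{j-2}$ (four flips). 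Your outline is correct but defers exactly this bookkeeping, which is where the argument has content; in particular your remark that each subcase ``reduces to a direct application of Proposition~\ref{prop:convexity}'' slightly understates the three-flip case, where one must also verify from the scheme that the $\ell$-position does not move.
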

\begin{proof} Refer to Figure \ref{polygon-vertices-figure} and Equation \ref{eq:unipotent_scheme}. We recall the setting we discussed before Proposition \ref{prop:hol_arcs}: because the paths ${\delta}_{i}$ and ${\alpha}_{i}$ are part of a geodesic for the flat metric $|q_{0}|^{\frac{2}{3}}$, the angle between ${\delta}_{i}$ and ${\alpha}_{i}$ is at least $\pi$ (measured in the singular flat metric) at either side. Since Stokes rays are $\pi/3$ apart, the interval $[\theta_{i}, \theta_{i+1}]$ contains at least three Stokes directions. Crossing a Stokes direction corresponds to \enquote{flipping} the initial triangle (i.e., moving to the next triangle in the sequence displayed in \eqref{eq:unipotent_scheme}, accomplished geometrically between triangles that share an edge). 
In the rest of the proof, because it is only necessary to consider flips of at most $n$ triangles -- amounting to traversing halfway around the polygon -- for the clarity of the exposition, all of our subsequent discussion will assume this restriction.

We assume that the initial basis $\{v_{1}, v_{2}, v_{3}\}$ projects to a circumscribed triangle and thus is of the form $\{\vec r_{j}, \vec q_{j}, \vec r_{j+1}\}$. In this case the eigenvector corresponding to the highest eigenvalue can project to either $r_{j}$ or $r_{j+1}$: it projects to $r_{j}$ if the direction $\theta_{i}$ is in the first half of the interval determined by two Stokes directions and to $r_{j+1}$ otherwise. We will explain the argument in detail when the highest eigenvalue is in the direction of $\vec r_{j}$; the case of $\vec r_{j+1}$ or when the initial basis projects to an inscribed triangle are analogous and left to the reader. If the direction of the highest eigenvalue is $\vec r_{j}$, then after at least three flips in the counter-clockwise direction, by Proposition \ref{prop:convexity}, the point $r_{j}$ does not lie in any of the lines generated by the final triangle. Hence all entries in the first column of $U(\theta_{i}, \theta_{i+1})^{-1}$ are non-zero. If we move in clockwise direction instead, then after at least five flips the point $r_{i}$ never lies on any of the lines generated by the vertices of the final triangle and the claim follows as before. Thus, we only need to check what happens when only three or four Stokes directions are contained in the interval $[\theta_{i}, \theta_{i+1}]$. 

If the final triangle is obtained after four flips in the clockwise direction, then it is circumscribed and the vertex corresponding to the highest eigenvalue of $D({\delta_i})^{-1}$ is $q_{j-2}$, i.e. the one vertex outside the polygon. This is because, referencing Equation \ref{eq:unipotent_scheme} again, we see both that the highest eigenvalue of $D({\delta_i})^{-1}$ is the lowest eigenvalue of $F_{T}(z)$ and that the corresponding eigenvector always projects to the vertex exterior to the polygon. Because $r_{j} \in \overline{r_{j-1}q_{j-2}}$ (see for example Figure~\ref{polygon-vertices-figure}), the coordinates of $\vec r_{j}$ with respect the final basis $\{\vec r_{j-1}, \vec q_{j-2}, \vec r_{j-2}\}$ have a nonzero component along $\vec q_{j-2}$, hence the corresponding entry in $U(\theta_{i}, \theta_{i+1})^{-1}$ is nonzero. 

Finally, if the final basis is obtained after only three flips, it is easy to see that the highest eigenvalues of $D({\alpha_{i+1}})^{-1}$ and $D({\delta_i})^{-1}$ are always in the same position: this follows again by a careful reading of Equation \eqref{eq:unipotent_scheme}. (We add some details. Recall that we chose the initial point to be in the first half of the region between Stokes lines, i.e. in the region between the Stokes line and the Weyl chamber.  For example, we are focused on a point in the region between $\frac{3\pi}{2}$ and $\frac{5\pi}{3}$; after three flips [which we read from right to left in Equation \eqref{eq:unipotent_scheme}], this point lands in the region between $\frac{\pi}{2}$ and $\frac{2\pi}{3}$.  By inspection of the corresponding bases in \eqref{eq:unipotent_scheme}, we see that the initial and terminal basis element, $r_2$, corresponding to the highest eigenvalue is unchanged, so the corresponding entry in the product of unipotents is non-vanishing.) The chain in Equation \ref{eq:unipotent_scheme} shows that the triple $(r_{j}, r_{j+1}, q_j)$ is sent to $(r_{j}, r_{j-2}, r_{j-1})$, so the elements on the diagonal are all non-zero. 

Finally, we note that in the argument above, all the conditions checked involve only incidences of the given points and lines in $\RP^2$, and not the choices of vector lifts in $\R^3$. Thus the proposition holds not only for regular polygons, but for all convex polygons.
\end{proof}

Below in Proposition \ref{prop:positive-entries}, we extend Proposition \ref{prop:non-zero-entry} to determine the signs of all the relevant entries.   These signs will be useful in handling the special cases in which the angle of a geodesic segment in the flat metric is at a Stokes line or wall of a Weyl chamber. 

We recall Cramer's Rule from linear algebra: 
\begin{lemma} \label{lem:change-of-basis-formula}
If $v$ is a vector in $\R^m$, and $w_1,\dots, w_m$ is a basis, then $v = \sum_\alpha \lambda^\alpha w_\alpha$, where
$$ \lambda^\alpha = \frac{\det(w_1,\dots,w_{\alpha-1},v,w_{\alpha+1},\dots,w_m)} {\det (w_1,\dots, w_m)} \eqqcolon \frac{\mathcal D_v}{\mathcal D}.$$
\end{lemma}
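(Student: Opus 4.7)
This is a classical statement (Cramer's rule), and the plan is to give a short proof using the multilinearity and alternating properties of the determinant rather than invoking Cramer in black-box form.

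First I would note that since $w_{1}, \dots, w_{m}$ form a basis, the determinant $\mathcal{D} = \det(w_{1}, \dots, w_{m})$ is nonzero, so there is no division-by-zero concern. Moreover, there exist unique scalars $\lambda^{1}, \dots, \lambda^{m}$ such that $v = \sum_{\beta} \lambda^{\beta} w_{\beta}$; the goal is to compute these $\lambda^{\alpha}$ in closed form.

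Next, for each fixed $\alpha$, substitute the expansion of $v$ into the numerator $\mathcal{D}_{v}$ and use multilinearity of the determinant in the $\alpha$-th slot:
\[
\mathcal{D}_{v} = \det\!\left(w_{1}, \dots, w_{\alpha-1}, \textstyle\sum_{\beta} \lambda^{\beta} w_{\beta}, w_{\alpha+1}, \dots, w_{m}\right) = \sum_{\beta} \lambda^{\beta} \det(w_{1}, \dots, w_{\alpha-1}, w_{\beta}, w_{\alpha+1}, \dots, w_{m}).
\]
By the alternating property, each term with $\beta \neq \alpha$ vanishes because then two of the columns coincide. Only the $\beta = \alpha$ term survives, and it equals $\lambda^{\alpha} \mathcal{D}$.

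Finally, dividing by the nonzero $\mathcal{D}$ yields $\lambda^{\alpha} = \mathcal{D}_{v}/\mathcal{D}$, as claimed. There is no real obstacle here; the only thing to be careful about is making the multilinearity/alternating step crisp and noting that $\mathcal{D} \neq 0$ is what lets us conclude. The statement and proof extend verbatim to any field, and in particular will be applied in the sequel over $\mathbb{R}$ with $m = 3$ to extract the coefficients of eigenvector-decompositions across a change of basis induced by the unipotents $U(\theta_i, \theta_{i+1})^{-1}$.
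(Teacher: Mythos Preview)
Your proof is correct and is the standard argument for Cramer's rule via multilinearity and the alternating property. The paper itself does not give a proof: it simply introduces the lemma with the phrase ``We recall Cramer's Rule from linear algebra'' and states the formula without further justification, so your write-up is entirely consistent with (and more detailed than) what appears there.
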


Assume for now that the polygon is regular, so that all the $r_j,q_j$ lift to vectors $\vec r_j, \vec q_j$ as in Proposition \ref{prop:regular-polygon-lift}. It is useful to set up some notation for the following results. We let $v= \vec r_k$ be the eigenvector with the largest eigenvalue of $D(\tilde{\alpha_i})^{-1}$  for the incoming ray $\tilde{\alpha_{i}}$.  The frame for the outgoing ray is $\{w_\alpha\}$, and $w\in\{w_\alpha\}$ is the one with the largest eigenvalue for $D(\tilde{\delta_{i}})^{-1}$.  We can write $\{w_\alpha\}\setminus\{w\} = \{\vec r_j, \vec r_{j+1}\}$ for some $j$.

\begin{lemma} \label{lemma:unchanged-stokes-line}
$\mathcal D_v$ is unchanged if the incoming or outgoing angles vary by crossing a single Stokes line.
\end{lemma}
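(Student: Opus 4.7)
By Cramer's rule (Lemma~\ref{lem:change-of-basis-formula}), the $(j_i,k_i)$-entry of $U(\theta_i,\theta_{i+1})^{-1}$ that governs the leading order of $\Hol_s$ is $\mathcal{D}_v/\mathcal{D}$, and since $\mathcal{D}=\det(w_1,w_2,w_3)$ is invariant under any unipotent change of basis, the lemma reduces to showing that $\mathcal{D}_v$ itself is preserved by the extra Stokes unipotent that the $\pm\eta$-modification inserts into the product $U(\theta_i,\theta_{i+1})^{-1}$.

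The strategy is to exploit the rigid form of a single Stokes crossing furnished by Proposition~\ref{prop:regular-polygon-lift}: the transition replaces exactly one vertex of the relevant triangle while fixing the other two, and the explicit vector lifts yield the relation
\[
   \vec v_{\mathrm{new}}-\vec v_{\mathrm{old}} = c\,\vec u,
\]
where $\vec u$ is one of the two fixed vertices of the triangle and $c$ is a specific nonzero constant (of the form $-(1+2\cos(2\pi/n))$ up to rotation of indices). Expanding $\mathcal{D}_v^{\mathrm{after}}$ by multilinearity of the determinant then gives
\[
   \mathcal{D}_v^{\mathrm{after}}-\mathcal{D}_v^{\mathrm{before}}=c\cdot\det\bigl(\ldots;\,\vec u\text{ placed in the changing column}\bigr).
\]
Two cases handle themselves directly. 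First, when the changing column is the leading column (i.e., the one replaced by $v$ in $\mathcal{D}_v$), the perturbation is never seen by $\mathcal{D}_v$ and the two determinants agree on the nose. Second, when $\vec u$ is the non-leading fixed vertex, it already appears as a column of $\mathcal{D}_v$, so the correction determinant has two equal columns and vanishes by antisymmetry. The case of varying the incoming angle follows by the symmetric argument, swapping the roles of the incoming vector $v$ and the outgoing frame $\{w_\alpha\}$.

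The main obstacle is the residual case in which the fixed vertex $\vec u$ prescribed by Proposition~\ref{prop:regular-polygon-lift} happens to coincide with the leading vertex $w$. This is handled by combining the cone-angle constraint at the zero (each of $\tilde\alpha_i,\tilde\delta_i$ subtends an angle of at least $\pi$ at the cone point) with the tabulation in \eqref{eq:unipotent_scheme}: together these force the incoming leading vector $v=\vec r_k$ to coincide with one of the two polygon vertices shared by the triangles on either side of the Stokes line. The correction then reduces to $\det(\vec r_a,\vec r_b,\vec r_k)$ with $k\in\{a,b\}$, which vanishes because two of its columns are equal.
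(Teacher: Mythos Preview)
Your argument is considerably more elaborate than needed, and the elaborate parts are where the problems lie. The paper's proof is a two-line observation drawn directly from the table \eqref{eq:unipotent_scheme}: at any single Stokes crossing, the eigenvalue ordering $(s,m,\ell)$ is constant (Stokes lines lie in the interior of Weyl chambers), and the only triangle vertex that changes is the one labeled $s$. Since $v$ is by definition the $\ell$-vertex of the incoming frame, it is untouched when the incoming angle crosses a Stokes line; and since $w$ is the $s$-vertex of the outgoing frame, it is precisely the column that changes when the outgoing angle crosses a Stokes line---but that is the column replaced by $v$ in $\mathcal{D}_v$, so again nothing happens.

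Your Case~1 is exactly this last observation, and it is the \emph{only} case that occurs for the outgoing angle. You do not establish this, and so you are led to manufacture Cases~2 and ``residual''. The residual-case argument is where things go wrong: the claim that the cone-angle constraint forces $v=\vec r_k$ to coincide with one of the shared polygon vertices of the outgoing triangles is unjustified (and in fact false in general---$v$ comes from the incoming frame and can be many flips away around the polygon). The appeal to ``each of $\tilde\alpha_i,\tilde\delta_i$ subtends an angle of at least $\pi$'' misstates the geodesic condition, which constrains the arc $\beta_i$, not the individual segments; in any case it has no bearing on a single Stokes crossing of the outgoing frame.

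For the incoming angle, your ``symmetric argument, swapping the roles of $v$ and $\{w_\alpha\}$'' is not spelled out and is not obviously symmetric: $v$ is a single vector while $\{w_\alpha\}$ is a frame, and $\mathcal{D}_v$ treats them asymmetrically. What you actually need is the observation above that $v$, being the $\ell$-vertex, does not move at a Stokes crossing. Once you read that off the table, the incoming case is trivial and no determinant manipulation is required.
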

\begin{proof}
Refer to equation \eqref{eq:unipotent_scheme}. 
If the incoming angle moves across a single Stokes ray, the largest eigenvector $v$ of $F_T$ is unchanged: recall that in equation \eqref{eq:unipotent_scheme}, each Stokes ray, denoted by $\mapsto$, bisects a Weyl chamber region, whose boundaries are denoted by vertical lines. If the outgoing angle is changed by crossing a single Stokes line, then the smallest eigenvector $w$ of $F_T$ is the only vector to change in the frame $\{w_{\alpha}\}$. Upon replacing $w$ with $v$, we see $\mathcal D_v$ (and indeed the underlying matrix) is unchanged. 
\end{proof}

\begin{lemma} \label{lem:Dv-pos}
$\mathcal D_v = \det (\vec r_j, \vec r_{j+1}, \vec r_k)$ for some $j,k \in \Z/n$. $\mathcal D_v > 0$ if $k\neq j,j+1$, and is 0 otherwise. 
\end{lemma}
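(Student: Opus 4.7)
The plan is to combine a case-by-case inspection of the bases appearing in table \eqref{eq:unipotent_scheme} with an explicit determinant computation for vertices of the regular $n$-gon, splitting the argument into an identification step and a sign step.

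First, I would recall from the proof of Proposition \ref{prop:non-zero-entry} that $v$, as the eigenvector for the largest eigenvalue of $D(\tilde\alpha_i)^{-1}$, sits in the $\ell$-position of the incoming basis; reading across \eqref{eq:unipotent_scheme} shows that the $\ell$-label is always assigned to a polygon vertex $\vec r_m$ rather than an exterior vertex $\vec q_m$ (since the $\vec q_m$'s uniformly carry the $s$-label). Hence $v=\vec r_k$ for a definite $k$. On the outgoing side, $w$ carries the $s$-label of $F_T$, and deleting it from either an inscribed or a circumscribed basis always leaves two adjacent polygon vertices $\vec r_j,\vec r_{j+1}$. Substituting $v$ for $w$ in the outgoing basis and reordering columns into $(\vec r_j,\vec r_{j+1},\vec r_k)$, one checks case by case that the required permutation is either the identity or a 3-cycle, both of which are even; this upgrades the identification to the strict equality $\mathcal D_v=\det(\vec r_j,\vec r_{j+1},\vec r_k)$ rather than just up to sign.

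For the sign, I would use the explicit coordinates $\vec r_m=(\cos(2\pi m/n),\sin(2\pi m/n),1)^{t}$ to write
$$
\det(\vec r_a,\vec r_b,\vec r_c)=2\cdot\mathrm{SignedArea}(P_a,P_b,P_c),
$$
where $P_m=(\cos(2\pi m/n),\sin(2\pi m/n))\in\R^2$ and the signed area is positive under counterclockwise orientation. If $k\in\{j,j+1\}$, two columns coincide and the determinant vanishes. Otherwise $P_j,P_{j+1},P_k$ are three distinct vertices of a regular $n$-gon on the unit circle; since $P_j$ and $P_{j+1}$ are consecutive in the counterclockwise cyclic ordering, the third vertex $P_k$ necessarily renders the triple $(P_j,P_{j+1},P_k)$ counterclockwise, yielding $\mathcal D_v>0$.

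The main obstacle is the finite but somewhat delicate bookkeeping in the identification step: one must enumerate the basis configurations from \eqref{eq:unipotent_scheme} and verify in each that the two remaining polygon vertices are adjacent and that the reordering permutation has even sign. The uniformity of this outcome across both inscribed and circumscribed cases ultimately reflects the coherent orientation of the successive triangles as one crosses Stokes lines around the zero of the cubic differential.
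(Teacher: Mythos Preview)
Your proposal is correct and follows essentially the same route as the paper: you identify $v=\vec r_k$ from the $\ell$-label in table \eqref{eq:unipotent_scheme}, observe that deleting $w$ from the outgoing basis leaves two adjacent polygon vertices (with the paper phrasing your even-permutation check as ``upon perhaps performing a cyclic permutation''), and then deduce positivity from the counterclockwise orientation of the triangle $(r_j,r_{j+1},r_k)$ together with the explicit lift $\vec r_m=(r_m,1)$. Your write-up is slightly more explicit about the label bookkeeping and the signed-area interpretation, but the argument is the same.
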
 
\begin{proof}
 We see $v$, as the eigenvector of the largest eigenvalue for an incoming ray, is of the form $v = \vec r_k$ for some $k$. Similarly, 
 after possibly applying Lemma~\ref{lemma:unchanged-stokes-line} and replacing $w\in\{w_{\alpha}\}$ with $v$, we see from \ref{eq:unipotent_scheme}, upon perhaps performing a cyclic permutation, that $\mathcal D_v = \det (\vec r_j, \vec r_{j+1}, \vec r_k)$ for some $j$. 

If $k=j,j+1$, it is obvious that $\mathcal D_v =0$. On the other hand, if $k\neq j,j+1$, then $r_j,r_{j+1},r_k$ form a counterclockwise-oriented triangle in the plane. By Proposition \ref{prop:regular-polygon-lift}, $\vec r_j = (r_j,1)\in \R^3$.  So this orientation of the triangle implies $\mathcal D_v = \det(\vec r_j, \vec r_{j+1}, \vec r_k) >0$. 
\end{proof}

\begin{lemma} \label{lem:D-pos}
$\mathcal D>0$. 
\end{lemma}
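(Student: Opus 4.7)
The plan is to exploit the invariance of the determinant under the change-of-basis unipotents identified in Proposition~\ref{prop:regular-polygon-lift}, reducing the claim to evaluating $\mathcal D$ on a single, particularly simple basis, which I can then pick to be an inscribed-triangle basis whose determinant I can read off geometrically.

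First I would note that consecutive bases $\mathcal B_i$ and $\mathcal B_{i+1}$ in the sequence \eqref{eq:unipotent_scheme} are related by the unipotent transformations produced by Proposition~\ref{prop:regular-polygon-lift}. Writing $\mathcal B_{i+1} = \mathcal B_i \, U_i$ with columns arranged in the order indicated in \eqref{eq:unipotent_scheme}, the fact that each $U_i$ is unipotent (and hence has determinant $1$) gives $\det \mathcal B_{i+1} = \det \mathcal B_i$. Thus the value of $\mathcal D$ is constant along the chain of bases, and in particular it equals $\det(\vec r_{-1}, \vec r_0, \vec r_1)$, coming from the inscribed-triangle basis at the far left of \eqref{eq:unipotent_scheme}.

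Next, using the explicit lifts $\vec r_j = (\cos(2\pi j/n), \sin(2\pi j/n), 1)^t$ furnished by Proposition~\ref{prop:regular-polygon-lift}, I would recognize $\det(\vec r_{-1}, \vec r_0, \vec r_1)$ as twice the signed area of the planar triangle with vertices $r_{-1}, r_0, r_1$ traversed in that order. Since $r_{-1}, r_0, r_1$ are three consecutive vertices of the convex polygon $P$ labelled counterclockwise, the orientation of the triangle is positive, so this signed area, and hence $\mathcal D$, is strictly positive. Combining the two steps, $\mathcal D > 0$ for every basis along the sequence.

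The main subtlety to watch for is that the column ordering displayed in \eqref{eq:unipotent_scheme} must match the unipotent action from the right, so that no hidden sign-reversing permutation is introduced between $\mathcal B_i$ and $\mathcal B_{i+1}$. This is verified by direct inspection of \eqref{eq:unipotent_scheme}: each Stokes crossing replaces a single vector of the basis while leaving the other two fixed in their positions, so the transition is genuinely unipotent in the displayed ordering and the sign of the determinant is preserved throughout.
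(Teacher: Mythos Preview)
Your proposal is correct and follows essentially the same approach as the paper: both arguments observe that the determinant is invariant under the unipotent transitions between consecutive bases in the chain \eqref{eq:unipotent_scheme}, and then check positivity on an inscribed-triangle basis via the counterclockwise orientation of consecutive vertices of $P$. Your explicit remark on the column ordering is a useful sanity check that the paper leaves implicit.
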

\begin{proof}
In the case of an inscribed triangle, $\mathcal D = \det (\vec r_j, \vec r_{j+1}, \vec r_{j+2})$ for some $j$. Thus as in the previous lemma, $\mathcal D>0$. To see the statement for a circumscribed triangle, note that it holds for an inscribed triangle and then to pass from an inscribed to the subsequent circumscribed triangle, the frame is changed by a unipotent transformation, which leaves the determinant $\mathcal D$ unchanged.
\end{proof}

\begin{prop} \label{prop:positive-entries} 
Let $P$ be any convex polygon. Given the notation of Proposition \ref{prop:non-zero-entry}, the $(j,k)$ entry of $U(\theta_i, \theta_{i+1})^{-1}$ is positive. 
\end{prop}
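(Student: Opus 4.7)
The plan is to deduce the proposition in two steps: first verify positivity in the regular polygon case by a direct computation using Cramer's rule, and then extend to arbitrary convex polygons by a continuity argument based on Proposition \ref{prop:non-zero-entry}.

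For the regular polygon case, I would invoke Lemma \ref{lem:change-of-basis-formula} to express the $(j,k)$-entry of $U(\theta_i,\theta_{i+1})^{-1}$ as the ratio $\mathcal{D}_v/\mathcal{D}$. Under the hypothesis that $(j,k)$ is the index identified in Proposition \ref{prop:non-zero-entry}, Lemma \ref{lem:Dv-pos} yields $\mathcal{D}_v>0$ --- the degenerate subcase $k\in\{j,j+1\}$ is excluded precisely because we know the entry to be nonzero --- and Lemma \ref{lem:D-pos} gives $\mathcal{D}>0$. Hence the entry is strictly positive when $P$ is the regular $n$-gon.

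For a general convex polygon $P$ with $n$ vertices, I would exploit the path-connectedness of the space of convex $n$-gons with labelled vertices: choose a continuous deformation $\{P_t\}_{t\in[0,1]}$ from $P_0=P$ to $P_1$ the regular $n$-gon, so that the vertex data $r_j(t)$ and auxiliary points $q_j(t)$ vary continuously. I would then select a continuous family of vector lifts $\vec r_j(t),\vec q_j(t)\in\R^3$ subject to the unipotent normalization of Proposition \ref{prop:regular-polygon-lift}; the argument of that proposition adapts to any convex polygon, and uniqueness up to a single nonzero multiplicative constant allows one to fix the scaling continuously by, say, pinning down $\vec r_0(t)$ in a fixed affine chart. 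The matrix entries of $U(\theta_i,\theta_{i+1})^{-1}$ are then continuous functions of $t$, and by Proposition \ref{prop:non-zero-entry} the $(j,k)$-entry is never zero along the path. Its sign is therefore constant on $[0,1]$, and by the regular polygon case it is positive at $t=1$, hence positive at $t=0$.

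The main obstacle I expect is the bookkeeping needed to verify that the unipotent-compatible lifts $\vec r_j(t),\vec q_j(t)$ can genuinely be chosen continuously across the full sequence of flips in \eqref{eq:unipotent_scheme}, which oscillate between inscribed and circumscribed triangle configurations. Conceptually the unipotent conditions are a collection of linear relations on the lifts of full rank modulo overall scaling, and solving them continuously reduces to a routine implicit-function statement on the open subset of $(\RP^2)^n$ of convex $n$-gons with a marked basepoint lift; but writing down the precise continuous section requires careful care that the relations remain consistent as the flip pattern evolves.
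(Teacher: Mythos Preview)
Your proposal is correct and follows essentially the same approach as the paper: verify positivity for the regular polygon via Lemmas \ref{lem:change-of-basis-formula}, \ref{lem:Dv-pos}, and \ref{lem:D-pos}, then use connectedness of the moduli space of convex $n$-gons together with the nonvanishing from Proposition \ref{prop:non-zero-entry} to propagate the sign by continuity. Your flagged obstacle about continuous lifts is not a genuine issue, since the unipotent matrices $U(\theta_i,\theta_{i+1})^{-1}$ are intrinsically determined by the polygon's geometry (as the unique unipotent transformations realizing the triangle flips) and hence vary continuously without any need to track a global section of lifts.
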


\begin{proof}
Proposition \ref{prop:non-zero-entry} shows the relevant entry is nonzero. For the particular case of regular polygons, Lemmas \ref{lem:change-of-basis-formula}, \ref{lem:Dv-pos}, and \ref{lem:D-pos} show that this entry is positive. Then the general result follows by continuity and the connectedness of the moduli space of convex $n$-gons. 
\end{proof}

\section{Main Theorem - asymptotics of singular values} \label{sec: main theorem}
In this section, we prove the asymptotics of singular values of the holonomy associated to geodesic paths.  The singular values naturally give the distance in the symmetric space $\SL(3,\R)/\SO(3)$.  First, in Theorem \ref{main_thm}, we focus on the case of regular geodesic paths (allowing Stokes directions).  In these cases, we have shown there is always a unique largest element in each diagonal matrix $D( c_i)^{-1}$ in (\ref{eq:A(s)c_i}), while the relevant entries linking them together in $U(\theta_i, \theta_{i+1})^{-1}$ are positive.  Later, in Theorem \ref{main_thm_general}, we remove the remaining restrictions to allow any geodesic path without any restriction on the angles of the segments.  Finally in Corollary \ref{eigenvalues} we extend the analysis beyond just singular values to include eigenvalues.

\begin{thm}\label{main_thm} Using the same notation as in Lemma \ref{expansion of A(s)}, for every regular path $\tilde c_\gamma$, and indeed for every path none of whose segments is contained in a wall of a Weyl chamber, we have 
$$ \lim_{s\to+\infty} \frac{\log\| \Hol_s(\tilde c_\gamma)\|} { s^{\frac13}} = \sum_{i=1}^\ell \nu^i,$$
where $\|\cdot\|$ is any submultiplicative matrix norm and $\nu^i$ is the largest of the $\nu_j^i = -2^{\frac23}\Ree \int_{c_i} \phi_j$. 
\end{thm}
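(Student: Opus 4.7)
The plan is to combine the expansion of $A(s)$ from Lemma \ref{expansion of A(s)} with a telescoping identity for the elementary matrix factors, and then to bound the remainder $E(s)=\Hol_s(\tilde c_\gamma)-A(s)$.

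First, I would unwind the product in Lemma \ref{expansion of A(s)}. By the remark immediately following that lemma, the alignment $k_i = j_{i+1}$ holds because $\alpha_i$ and $\delta_{i+1}$ are subsegments of the same saddle connection $c_{i+1}$ and thus share the same dominant cube root; hence $E_{j_i,k_i}E_{j_{i+1},k_{i+1}}=E_{j_i,k_{i+1}}$, and by induction $\prod_{i=1}^\ell E_{j_i,k_i}=E_{j_1,k_\ell}$, which is a nonzero rank-one matrix. Proposition \ref{prop:positive-entries} ensures that the scalars $c_{j_i,k_i}$ are all positive, so after collecting factors
$$A(s) \;=\; C\, e^{s^{1/3}\Sigma}\, S E_{j_1,k_\ell} S^{-1}\bigl(\Id + o(\Id)\bigr)$$
for some $C>0$, where $\Sigma = \sum_{i=1}^\ell (\mu^i_{k_i} + \lambda^i_{j_i})$. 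Since the dominant cube root is constant along the full saddle connection $c_i = \delta_i\cup\alpha_{i-1}$, the exponents glue across each interior zero and the sum telescopes into $\Sigma = \sum_{i=1}^\ell \nu^i$.

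Next, I would extract the asymptotics. Because $SE_{j_1,k_\ell}S^{-1}$ is a fixed nonzero matrix, any submultiplicative norm satisfies $\log\|A(s)\| = s^{1/3}\sum_i\nu^i + O(1)$. The remainder $E(s)$ collects all subleading contributions from the factorizations in Propositions \ref{hol-rays} and \ref{prop:hol_arcs}: each diagonal block carries an error of strictly lower exponential order than its dominant eigenvalue, and each unipotent block carries an additive error $o(\Id)$ inside the $F_T$-conjugation. Expanding the full product and applying submultiplicativity, every mixed term containing at least one error factor is $o$ of the corresponding leading block; hence $\|E(s)\| = o(\|A(s)\|)$, so $\log\|\Hol_s(\tilde c_\gamma)\| = \log\|A(s)\| + o(1)$, and dividing by $s^{1/3}$ gives the claimed limit. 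The ancillary reductions -- equivalence of submultiplicative norms on $M_3(\R)$, the polynomial-in-$s$ conjugation between the natural-coordinate and global $\mathcal F$ frames (Remark \ref{rmk:compare-frames}), and the Stokes-ray extension of Lemma \ref{expansion of A(s)} stated just above -- all vanish after passing to $\log(\cdot)/s^{1/3}$.

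The main obstacle, for which the earlier sections were built, is the telescoping itself. Without the alignment $k_i = j_{i+1}$ together with the nonvanishing (and positivity) of the $(j_i,k_i)$-entry of $U(\theta_i,\theta_{i+1})^{-1}$ provided by Propositions \ref{prop:non-zero-entry} and \ref{prop:positive-entries}, the product of elementary matrices would collapse and the genuine leading exponent would drop strictly below $\sum_i \nu^i$. The exclusion of saddle connections lying along a wall of a Weyl chamber is exactly what guarantees that on each $c_i$ there is a unique dominant cube root, so that the factorization above is unambiguous and the positive top entries combine constructively; this is the holonomy-level manifestation of the \enquote{no folding} phenomenon used later to rule out branching of the limiting harmonic map near the zeros.
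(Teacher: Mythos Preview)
Your argument is essentially the paper's own proof: expand $A(s)$ via Lemma~\ref{expansion of A(s)}, telescope the elementary matrices using $k_i=j_{i+1}$, and then show $\|E(s)\|=o(\|A(s)\|)$ by expanding the product and bounding each mixed term. Two small points are worth tightening. First, you invoke Proposition~\ref{prop:positive-entries} for positivity of $c_{j_i,k_i}$, but for this theorem only nonvanishing (Proposition~\ref{prop:non-zero-entry}, already built into Lemma~\ref{expansion of A(s)}) is needed; positivity is what handles the Weyl-wall case in Theorem~\ref{main_thm_general}. Second, your phrase ``applying submultiplicativity, every mixed term containing at least one error factor is $o$ of the corresponding leading block'' hides a step the paper checks explicitly: when an error $o(e^{s^{1/3}\tilde\lambda^m})$ from the truncated segment $\tilde\delta_m$ meets the exact radial factor $D(\delta_m\setminus\tilde\delta_m)^{-1}$ coming from the $F_T$-conjugation in Proposition~\ref{prop:hol_arcs}, submultiplicativity alone gives $o(e^{s^{1/3}\tilde\lambda^m})\cdot\|D(\delta_m\setminus\tilde\delta_m)^{-1}\|$, and one must observe that this norm is exactly $e^{s^{1/3}(\lambda^m-\tilde\lambda^m)}$ because $\tilde\delta_m$ and $\delta_m\setminus\tilde\delta_m$ lie along the same angle $\theta_m$, so their dominant diagonal positions agree. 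This is elementary but is the content of the paragraph in the paper beginning ``This last expression will be $o(\|A(s)\|_\infty)$ if we can show\ldots''.
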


\begin{proof}
Recall from Section~\ref{sec:ODEs_par_trans} that $\Hol_s(\tilde c_\gamma) = A(s) + E(s)$. Assume for now that $\|E(s)\| = o(\|A(s)\|)$; we return to check this assumption after using it to conclude the theorem. Then, by the estimates of $A(s)$ in Lemma \ref{expansion of A(s)},
\begin{eqnarray*}
\log\|\Hol_s(\tilde c_\gamma)\| &=& \log \|A(s) + E(s) \| \\
&=& \log \|A(s)\| + \log \left\| \frac{A(s)}{\|A(s)\|} + \frac{E(s)}{\|A(s)\|} \right\| \\
&=& \log\prod_{i=1}^\ell e^{s^{\frac13}({\mu}^i_{k_i} + {\lambda}^i_{j_i})}  \left\| \prod_{i=1}^\ell c_{j_{i},k_{i}}S E_{j_i,k_i} ({\rm Id}  + o(\Id)) S^{-1} \right\| \\
&& {}  +\log \left\| \frac{A(s)}{\|A(s)\|} + \frac{E(s)}{\|A(s)\|} \right\| , \\
\end{eqnarray*}
where $E_{j_i,k_i}$ denotes the $(j_i,k_i)$ elementary matrix.
Now, 
$$  \frac{A(s)}{\|A(s)\|} + \frac{E(s)}{\|A(s)\|} \to N, \qquad \mbox{with } \|N\|=1,\,\mbox{ as } \, \frac{E(s)}{\|A(s)\|} \to 0$$
and 
$$ \prod_{i=1}^\ell c_{j_{i},k_{i}}SE_{j_i,k_i} ({\rm Id} + o(\Id))S^{-1} = M+o(\Id), \quad M= S\left(\prod_{i=1}^\ell c_{j_{i},k_{i}}E_{j_i,k_i}\right)S^{-1} \neq 0.$$
Hence,
\begin{eqnarray*}
\lim_{s\to+\infty} \frac{\log\|\Hol_s(\tilde c_\gamma)\|}{s^{\frac13}} &=& \lim_{s\to+\infty} \Bigg[ \left( \sum_{i=1}^\ell {\mu}^i_{k_i} + {\lambda}^i_{j_i}\right) + \frac{\log\| M + o(\Id) \|}{s^\frac13}  \\
&& {} +  \frac1{s^{\frac13}} \log \left\| \frac{A(s)}{\|A(s)\|} + \frac{E(s)}{\|A(s)\|} \right\| \Bigg]\\
&=& \sum_{i=1}^\ell {\mu}^i_{k_i} + {\lambda}^i_{j_i} = \sum_{i=1}^\ell \mu^{i-1}_{k_{i-1}} + \lambda^i_{j_i} = \sum_{i=1}^\ell \nu^i,
\end{eqnarray*}
where as usual $i-1$ is considered modulo $\ell$. 

We only need to check the condition on the error: $ \| E(s) \| = o(\|A(s)\|)$. For this estimate we are going to use the $L^{\infty}$-norm $\| \cdot \|_{\infty}$. Note, however, that because all matrix norms are equivalent the result holds for any matrix norm. Recall that $E(s)$ is equal to
\begin{equation*}
\begin{split}
\prod_{i=1}^\ell \,& 
\left( S D(\tilde\delta_i)^{-1} S^{-1}  + o(e^{s^{\frac{1}{3}}{\tilde{\lambda}}^{i}}) \right) \cdot {} \\
& S D(\delta_i \setminus \tilde\delta_i)^{-1} S^{-1} (SU(\theta_i,\theta_{i+1})^{-1} S^{-1} + o(\Id)) S D( \alpha_i \setminus \tilde \alpha_i)^{-1} S^{-1} \cdot{} \\
& 
\left( SD(\tilde \alpha_i)^{-1}S^{-1} + o(e^{s^{\frac{1}{3}}{\tilde{\mu}}^{i}}) \right) \\
{}-\prod_{i=1}^\ell \,& SD(\delta_i)^{-1} U(\theta_i,\theta_{i+1})^{-1} D(\alpha_i)^{-1} S^{-1}.
\end{split}
\end{equation*}
Hence $E(s)$ is a sum of terms in which at least one of the factors contains $o(\Id)$, $o(e^{s^{\frac{1}{3}}{\tilde{\mu}}^{i}})$, or $o(e^{s^{\frac{1}{3}}{\tilde{\lambda}}^{i}})$. For instance, in the case where $o(\Id)$ arises once, we will have a term of the form 
\begin{equation*}
\begin{split}
E_m(s) = \prod_{i=1}^{m-1} & SD(\delta_i)^{-1} U(\theta_i,\theta_{i+1})^{-1} D(\alpha_i)^{-1} S^{-1} \cdot {} \\
& SD(\delta_m)^{-1}S^{-1}o(\Id)SD(\alpha_m)^{-1} S^{-1} \cdot {} \\
\prod_{i=m+1}^\ell & SD(\delta_i)^{-1} U(\theta_i,\theta_{i+1})^{-1} D(\alpha_i)^{-1} S^{-1} \\
\end{split}
\end{equation*}
and so 
$$ \|E_m(s)\|_\infty \le C\,o(1) \prod_{i=1}^\ell \| D(\delta_i)^{-1}\|_{\infty}\| D(\alpha_i)^{-1}\|_\infty \le C\, o(1) \prod_{i=1}^\ell e^{s^{\frac13}\nu^i}  = o(\|A(s)\|_{\infty}).$$
In this case, of course, we combined diagonal matrices in a convenient way. Yet even when the diagonal matrices do not simplify, we may obtain the same estimate. For instance, consider
\begin{equation*}
\begin{split}
E_m'(s) = \prod_{i=1}^{m-1}\, & SD(\delta_i)^{-1} U(\theta_i,\theta_{i+1})^{-1} D(\alpha_i)^{-1} S^{-1} \cdot {} \\
& 
o(e^{s^{\frac{1}{3}}{\tilde{\lambda}}^{m}}) SD( \delta_m \setminus \tilde \delta_m)^{-1} U(\theta_m, \theta_{m+1})^{-1} D(\alpha_m)^{-1} S^{-1} \\
\prod_{i=m+1}^{\ell}\, & SD(\delta_i)^{-1} U(\theta_i,\theta_{i+1})^{-1} D(\alpha_i)^{-1} S^{-1}.\\
\end{split}
\end{equation*}
Then 
\begin{eqnarray*} 
\| E_m'(s)\|_\infty &\le& C\,o(e^{s^{\frac{1}{3}}{\tilde{\lambda}}^{m}}) \| D(\delta_m \setminus \tilde \delta_m)^{-1}\|_\infty \| D(\alpha_m)^{-1} \|_\infty \cdot \\
    & &\mathop{\prod_{ i=1} ^\ell}_{i\neq m} \|D(\delta_i)^{-1}\|_\infty \| D(\alpha_i)^{-1} \|_\infty \\
&=& C\, o(e^{s^{\frac{1}{3}}{\tilde{\lambda}}^{m}}) \mathop{\prod_{ i=1} ^\ell}_{i\neq m} e^{ s^{\frac13} (\mu^i+ \lambda^i)}  \| D(\delta_m \setminus \tilde \delta_m)^{-1}\|_\infty \| D( \alpha_m)^{-1}\|_\infty.
\end{eqnarray*}
Here we used Lemma \ref{expansion of A(s)} and the fact that $e^{s^{\frac13}\lambda^m}$ is the largest eigenvalue of $D(\delta_m)^{-1}$.
This last expression will be $o(\|A(s)\|_{\infty})$  if we can show that $o(e^{s^{\frac{1}{3}}{\tilde{\lambda}}^{m}})\| D( \delta_m \setminus \tilde \delta_m)^{-1}\|_\infty  =o(e^{s^{\frac{1}{3}}{\lambda}^{m}})$.
 Note that $\|D(\alpha_m)^{-1}\|_\infty = e^{s^{\frac13}{\mu}_m}$. Now, if $\tilde\delta_m(t) = s^{\frac13}te^{i\theta_m}$ with $t\in[\epsilon, L_m/2]$, then $\delta_m(t) = s^{\frac13}t e^{i\theta_m}$ with $t\in[0,L_m/2]$ and 
 \begin{eqnarray*}
 D(\tilde\delta_m)^{-1} &=& \exp \left( s^{\frac13} (\epsilon-L_m/2) \left( \begin{array}{ccc} \cos(\theta_m) && \\ & \cos(\theta_m - \frac{2\pi}3) & \\ && \cos(\theta_m - \frac{4\pi}3) \end{array} \right) \right), \\
  D(\delta_m)^{-1} &=& \exp \left(-s^{\frac13} L_m/2 \left( \begin{array}{ccc} \cos(\theta_m) && \\ & \cos(\theta_m - \frac{2\pi}3) & \\ && \cos(\theta_m - \frac{4\pi}3) \end{array} \right) \right), \\
   D(\delta_m \setminus \tilde \delta_m)^{-1} &=& \exp \left( -s^{\frac13} \epsilon \left( \begin{array}{ccc} \cos(\theta_m) && \\ & \cos(\theta_m - \frac{2\pi}3) & \\ && \cos(\theta_m - \frac{4\pi}3) \end{array} \right) \right). \\
 \end{eqnarray*}
 So if the largest eigenvalue of $D(\tilde\delta_m)^{-1}$ is in position $j$, then
 \begin{eqnarray*}
   \| D(\tilde\delta_m)^{-1}\|_\infty &=& e^{s^{\frac13} (\epsilon-L_m/2)\cos(\theta_m - (j-1)2\pi/3)} = e^{s^{\frac13}\tilde{\lambda}^m},\\
      \| D(\delta_m\setminus \tilde\delta_m)^{-1}\|_\infty &=& e^{-s^{\frac13} \epsilon\cos(\theta_m - (j-1)2\pi/3)}, = e^{s^{\frac13}(\lambda^m-\tilde{\lambda}^{m})}\\
         \| D(\delta_m)^{-1}\|_\infty &=& e^{-s^{\frac13} (L_m/2)\cos(\theta_m - (j-1)2\pi/3)} = e^{s^{\frac13}\lambda^m},
\end{eqnarray*}
Thus, $o(e^{s^{\frac{1}{3}}{\tilde{\lambda}}^{m}})\| D( \delta_m \setminus \tilde \delta_m)^{-1}\|_\infty = o(e^{s^{\frac{1}{3}}{\lambda}^{m}})$, as required. 

The remaining cases are analogous, involving smaller error terms. 
\end{proof}

In particular, if we choose the submultiplicative matrix norm 
\[
    \| M \|_{2}:= \sigma_{1}(M)
\]
where $\sigma_{1}(M)$ denotes the highest singular value of $M$, in other words the highest eigenvalue of $\sqrt{M^{t}M}$, we obtain

\begin{cor}\label{cor:highest_sing_value} For every regular path $c_{\gamma}$ that is the concatenation of saddle connections ${c}_\ell, \dots, {c}_1$ we have 
\[
    \lim_{s\to +\infty}\frac{\log(\sigma_{1}(\Hol_{s}(c_{\gamma})))}{s^{\frac{1}{3}}}=\sum_{i=1}^{l} \lim_{s\to +\infty}\frac{\log(\sigma_{1}(\Hol_{s}({c}_{i})))}{s^{\frac{1}{3}}} \ .
\]
\end{cor}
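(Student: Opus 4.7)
The plan is to reduce the statement to Theorem~\ref{main_thm} applied with the spectral norm together with a direct application of Proposition~\ref{hol-rays} to each individual saddle connection. First, I would verify that the operator norm $\|M\|_{2}=\sigma_{1}(M)$ is indeed submultiplicative, which is standard: for any $v$, $\|MNv\|\le\|M\|_{2}\|N\|_{2}\|v\|$, so $\sigma_{1}(MN)\le\sigma_{1}(M)\sigma_{1}(N)$. This places the left-hand side under the scope of Theorem~\ref{main_thm}, giving
\[
\lim_{s\to+\infty}\frac{\log \sigma_{1}(\Hol_{s}(c_{\gamma}))}{s^{\frac13}}=\sum_{i=1}^{\ell}\nu^{i},
\]
where $\nu^{i}=\max_{j}\nu_{j}^{i}$ with $\nu_{j}^{i}=-2^{\frac23}\Ree\int_{c_{i}}\phi_{j}$.

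Next I would show that for every single saddle connection $c_{i}$, the corresponding term in the right-hand sum equals $\nu^{i}$. Because $c_{i}$ is a flat straight line between two consecutive zeros, the construction of Section~\ref{sec:ODEs_par_trans} specializes: one writes $c_{i}=\alpha_{i-1}\cup\delta_{i}$, removes small radial segments near the two endpoints, and applies Proposition~\ref{hol-rays} and Remark~\ref{rmk:hol-diagonal}. Up to polynomially bounded changes of frame coming from Remark~\ref{rmk:compare-frames}, this yields
\[
\Hol_{s}(c_{i})=S\,\diag\!\left(e^{s^{\frac13}\nu_{1}^{i}},\,e^{s^{\frac13}\nu_{2}^{i}},\,e^{s^{\frac13}\nu_{3}^{i}}\right)S^{-1}+o(e^{s^{\frac13}\nu^{i}}).
\]
Because $S$ is fixed and invertible, the spectral norm of the leading diagonal-conjugated term is pinched between two positive constants times $e^{s^{\frac13}\nu^{i}}$, and the error term is negligible. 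Hence
\[
\lim_{s\to+\infty}\frac{\log \sigma_{1}(\Hol_{s}(c_{i}))}{s^{\frac13}}=\nu^{i}.
\]
Summing this identity over $i=1,\dots,\ell$ matches the expression already obtained from Theorem~\ref{main_thm}, and the corollary follows.

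The only real subtlety I anticipate is the handling of the endpoints of each $c_{i}$, which lie at zeros of $q_{0}$: strictly speaking, Proposition~\ref{hol-rays} is formulated on the open segments $\tilde\alpha_{i}$ and $\tilde\delta_{i}$ bounded away from the zeros, and one must justify extending the leading-order formula to the full saddle connection between zeros. This is handled exactly as in the factorization of $A(s)$ in~\eqref{eq:A(s)c_i}: the contributions from the short radial pieces $\delta_{i}\setminus\tilde\delta_{i}$ and $\alpha_{i}\setminus\tilde\alpha_{i}$ multiply the diagonal exponents $s^{\frac13}\tilde\nu_{j}^{i}$ into $s^{\frac13}\nu_{j}^{i}$, since $\phi_{j}$ is closed and the integrals are additive along the straight segments. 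A secondary point is the possibility that several $\nu_{j}^{i}$ coincide (when $c_{i}$ is tangent to a wall of a Weyl chamber or is in a Stokes direction), but the proof of Theorem~\ref{main_thm} already shows that in all such cases the leading exponent is controlled by $\nu^{i}$, so no additional work is needed beyond what was established in Sections~\ref{sec:ODEs_par_trans}--\ref{sec: main theorem}.
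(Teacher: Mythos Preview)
Your proposal is correct and follows essentially the same route as the paper: apply Theorem~\ref{main_thm} with the spectral norm $\|M\|_{2}=\sigma_{1}(M)$ to compute the left-hand side, then use Proposition~\ref{hol-rays} (and the decomposition $c_{i}=\alpha_{i-1}\cup\delta_{i}$) to identify each summand on the right as $\nu^{i}$. The paper's proof is slightly terser on the per-segment computation, but the argument is the same; note also that since the corollary is stated only for \emph{regular} paths, your ``secondary point'' about coinciding $\nu_{j}^{i}$ is not actually needed here.
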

\begin{proof} Because the paths $c_{\gamma}$ and $\tilde c_\gamma$ have the same holonomy, by definition of ${\mu}^i_{k_{i}}$ and ${\lambda}^i_{j_{i}}$, Proposition \ref{hol-rays} and the fact that the saddle connection $
{c}_{i}$ is the concatenation of ${\delta}_{i}$ and ${\alpha}_{i-1}$ (see Figure \ref{fig:subpaths}), we have
\[
    \lim_{s\to +\infty}\frac{\log(\sigma_{1}(\Hol_{s}({c}_{i})))}{s^{\frac{1}{3}}}={\mu}^i_{k_{i}}+{\lambda}^{i-1}_{j_{i-1}}  = \nu^i ,
\]
where the indices are to be intended modulo $\ell$.
Thus the result follows from Theorem \ref{main_thm} applied to the norm $\|\cdot\|_{2}$. 
\end{proof}

We can also deduce the asymptotics of the other singular values
\begin{cor}\label{cor:all_sing_values} Let $\sigma_{j}$ denote the $j$-th largest singular value. Then
\[
    \lim_{s\to +\infty}\frac{\log(\sigma_{j}(\Hol_{s}(c_{\gamma})))}{s^{\frac{1}{3}}}=\sum_{i=1}^{\ell} \lim_{s\to +\infty}\frac{\log(\sigma_{j}(\Hol_{s}({c}_{i})))}{s^{\frac{1}{3}}}
\]
for $j=1,2,3$.  
\end{cor}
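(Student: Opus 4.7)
The plan is to reduce the statement for $j=2,3$ to the $j=1$ case already proven in Corollary \ref{cor:highest_sing_value}, using two elementary facts about singular values: (i) since $\Hol_s(c_\gamma) \in \SL(3,\R)$, the product of its singular values is $|\det|=1$, so $\log \sigma_1 + \log \sigma_2 + \log \sigma_3 = 0$; and (ii) for any invertible $3 \times 3$ matrix $M$, the singular values of $M^{-1}$ are the reciprocals of those of $M$ in reverse order, so in particular $\sigma_3(M) = 1/\sigma_1(M^{-1})$.

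I would handle $j=3$ first. The reverse path $c_\gamma^{-1}$ is the concatenation of the reversed saddle connections $c_\ell^{-1}, \dots, c_1^{-1}$, and reversing a saddle connection shifts its angle by $\pi$. Both the regularity condition $\Ree(\gamma^*\phi_i) \neq \Ree(\gamma^*\phi_j)$ and the property of pointing along a Stokes ray are invariant under this shift, so Corollary \ref{cor:highest_sing_value} applies verbatim to $c_\gamma^{-1}$ under the hypotheses inherited from the previous corollary. Since $\Hol_s(c_\gamma^{-1}) = \Hol_s(c_\gamma)^{-1}$, applying (ii) on both sides of the resulting identity transforms
\[
\lim_{s\to+\infty}\frac{\log \sigma_1(\Hol_s(c_\gamma^{-1}))}{s^{1/3}} = \sum_{i=1}^{\ell} \lim_{s\to+\infty}\frac{\log \sigma_1(\Hol_s(c_i^{-1}))}{s^{1/3}}
\]
into the $j=3$ statement after multiplying by $-1$.

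For $j=2$, I would then invoke (i). Because $\Hol_s(c_\gamma)$ and every $\Hol_s(c_i)$ lie in $\SL(3,\R)$, the relation $\log \sigma_2 = -\log \sigma_1 - \log \sigma_3$ holds in both the global and the per-segment setting. Dividing by $s^{1/3}$, taking the limit, and substituting the already-established $j=1$ and $j=3$ asymptotics gives the $j=2$ case immediately.

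The only potential obstacle is the verification that Corollary \ref{cor:highest_sing_value} can be applied to the reverse path; but as noted above, the hypotheses on saddle connections depend only on the unoriented direction and so are preserved by reversal. No further analytic estimates are required.
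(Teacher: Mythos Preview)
Your proposal is correct and follows essentially the same argument as the paper: reduce $j=3$ to the $j=1$ case by applying Corollary~\ref{cor:highest_sing_value} to the reversed path via $\sigma_3(M)=\sigma_1(M^{-1})^{-1}$, and then obtain $j=2$ from the determinant constraint $\sum_j\log\sigma_j=0$. Your additional remark that the regularity/Stokes hypotheses are invariant under reversal is a nice point the paper leaves implicit.
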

\begin{proof}We already know that the result holds for $\sigma_{1}(\Hol_{s}(c_{\gamma}))$ by Corollary \ref{cor:highest_sing_value}. Because
\[
    \log(\sigma_{3}(\Hol_{s}(c_{\gamma})))=-\log(\sigma_{1}(\Hol_{s}^{-1}(c_{\gamma})))=\log(\sigma_{1}(\Hol_{s}(c_{\gamma}^{-1}))) \ , 
\]
the statement is also true for $\sigma_{3}(\Hol_{s}(c_{\gamma}))$ by applying the previous corollary to the path $c_{\gamma}^{-1}$. Moreover, since $\Hol_{s}(c_{\gamma}) \in \SL(3,\R)$, we have
\[
    \log(\sigma_{1}(\Hol_{s}(c_{\gamma})))+\log(\sigma_{2}(\Hol_{s}(c_{\gamma})))+\log(\sigma_{3}(\Hol_{s}(c_{\gamma})))=0 \ ,
\]
hence the result holds for $\sigma_{2}(\Hol_{s}(c_{\gamma}))$ as well. 
\end{proof}

We now give an argument extending the above results to the case of flat geodesics that are not regular, in that they contain segments in the wall directions of a Weyl chamber. We begin with the generic situation where each flat geodesic segment has corresponding diagonal holonomy with a unique largest eigenvalue. In that case, as above, we compute the largest asymptotic singular value and then reverse the direction of the path to find the smallest.  This determines the asymptotic eigenvalue structure.  In particular, the arguments above already suffice to determine the largest asymptotic singular value along any geodesic path in which each segment has a unique largest eigenvalue. We summarize the discussion in the following proposition.

\begin{prop}
Theorem \ref{main_thm} and Corollary \ref{cor:highest_sing_value} hold for flat geodesic paths all of whose segments ${c}_{i}$ are such that each diagonal matrix $D({c}_{i})^{-1}$ has a distinct largest eigenvalue. 
\end{prop}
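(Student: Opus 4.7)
My plan is to re-examine the proof of Theorem \ref{main_thm} and verify that each step still goes through under the weaker hypothesis, since only the uniqueness of the largest eigenvalue of each $D(c_i)^{-1}$, and not the distinctness of all three eigenvalues, is actually used in the argument. The hypothesis in the proposition permits segments in certain Weyl chamber wall directions (those at which the largest eigenvalue remains unique while the two subdominant ones may coincide), rather than excluding all wall directions outright.

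First I would check each ingredient of the proof of Theorem \ref{main_thm} individually. Proposition \ref{prop:hol_arcs} needs only that the arc's endpoint angles avoid Stokes directions; this is automatic here because Weyl walls (at $\theta = k\pi/3$) are distinct from Stokes directions (at $\pi/6 + k\pi/3$). The entry-level assertion of Proposition \ref{prop:non-zero-entry} depends only on the vertex positions in the polygon associated to the dominant eigenvalue of each frame, and these positions are well-defined precisely under the unique-largest-eigenvalue hypothesis. Finally, Proposition \ref{hol-rays} produces a diagonal asymptotic with error $o(e^{s^{1/3}\tilde\mu^i})$ driven by that unique largest entry, so the formula remains meaningful when the subdominant eigenvalues agree.

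Next I would trace the error bound $\|E(s)\|_\infty = o(\|A(s)\|_\infty)$ exactly as in Theorem \ref{main_thm}. The factorization in Lemma \ref{expansion of A(s)} still yields a nonzero leading matrix $M = S\bigl(\prod_{i=1}^\ell c_{j_i,k_i} E_{j_i,k_i}\bigr)S^{-1}$, whose nonvanishing follows from Proposition \ref{prop:positive-entries}. Moreover, the operator norms $\|D(\alpha_i)^{-1}\|_\infty = e^{s^{1/3}\mu^i}$ and $\|D(\delta_i)^{-1}\|_\infty = e^{s^{1/3}\lambda^i}$ are detected by the unique largest eigenvalue alone. Thus the identity
$$\lim_{s\to +\infty}\frac{\log\|\Hol_s(\tilde c_\gamma)\|}{s^{1/3}} = \sum_{i=1}^\ell \nu^i$$
survives verbatim, and Corollary \ref{cor:highest_sing_value} is immediate upon specializing the submultiplicative norm to $\|\cdot\|_2=\sigma_1$.

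The main obstacle is verifying the robustness of Proposition \ref{hol-rays} when two eigenvalues of the target diagonal coincide, since the ODE argument in \cite{loftin2007flat} may tacitly assume distinctness. I would address this by isolating the only consequence we need: an asymptotic bound on the dominant eigenvalue's entry together with an error of strictly smaller exponential order. Such a one-sided bound can be obtained directly from the coarse estimates of Section \ref{sec:asymptotics of Blaschke} and the structure equations (\ref{U-V-def}), restricted to the leading diagonal component, and is insensitive to any degeneracy among subdominant eigenvalues. Since the entire proof of Theorem \ref{main_thm} only ever invokes the leading eigenvalue to bound the operator norm, this degeneracy is harmless and the conclusion follows.
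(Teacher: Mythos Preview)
Your proposal is correct and takes essentially the same approach as the paper, whose entire argument is the observation that the proof of Theorem \ref{main_thm} only ever invokes the uniqueness of the \emph{largest} eigenvalue of each $D(c_i)^{-1}$ (via Lemma \ref{expansion of A(s)} and Proposition \ref{prop:non-zero-entry}), so the conclusion extends verbatim. Your extra concern about Proposition \ref{hol-rays} is unnecessary: its error term is already phrased relative to the single largest exponent $\tilde\mu^i$, and the comparison with $\Psi_T$ is insensitive to any coincidence among the subdominant eigenvalues.
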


Now we address the remaining case where paths may contain segments so that at least one $D({c}_{i})^{-1}$ has two largest eigenvalues.  In this case, Lemma \ref{expansion of A(s)} becomes
\begin{equation} \label{eq:A(s)-multiple-large-entries}
	A(s) = \prod_{i=1}^\ell e^{s^{\frac13} (\mu^i_{j_i} + \lambda^i_{k_i})} S \Big(\sum_{\alpha} c_{j_{i_{\alpha}},k_{i_{\alpha}}}E_{j_{i_{\alpha}},k_{i_{\alpha}}}\Big)({\rm Id} + o(\Id)) S^{-1},
\end{equation}
where in each sum $\alpha$ ranges over one or two indices in $\{1,2,3\}$: one if there is a unique largest eigenvalue of $D( c_i)^{-1}$, and two if there are two largest eigenvalues. Proposition \ref{prop:positive-entries} then shows that the coefficients $c_{j_{i_{\alpha}},k_{i_{\alpha}}}$ are all positive, and thus there are no cancellations.  This is enough for the analysis above on submultiplicative matrix norms and singular values to apply.

Thus we have proved

\begin{thm}\label{main_thm_general}
For every geodesic path $\tilde c_\gamma$, we have 
$$ \lim_{s\to+\infty} \frac{\log\| \Hol_s(\tilde c_\gamma)\|} { s^{\frac13}} = \sum_{i=1}^\ell \nu^i$$
where $\|\cdot\|$ is any submultiplicative matrix norm. \\
In particular, if $\sigma_{j}(\Hol_s(c_{\gamma}))$ denotes the $j$-th largest singular value of the flat geodesic $c_{\gamma}$ homotopic to $\tilde c_\gamma$ with saddle connections ${c}_{1}, \dots, {c}_{l}$, then
\[
    \lim_{s\to +\infty}\frac{\log(\sigma_{j}(\Hol_{s}(c_{\gamma})))}{s^{\frac{1}{3}}}=\sum_{i=1}^\ell \lim_{s\to +\infty}\frac{\log(\sigma_{j}(\Hol_{s}({c}_{i})))}{s^{\frac{1}{3}}}
\]
for $j=1,2,3$.  
\end{thm}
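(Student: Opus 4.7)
The plan is to reduce Theorem \ref{main_thm_general} to the regular case already established in Theorem \ref{main_thm} by handling the one remaining difficulty: that a saddle connection ${c}_i$ may be oriented along a wall of a Weyl chamber, in which case the diagonal matrix $D(c_i)^{-1}$ has two coincident largest eigenvalues rather than a unique one. All other ingredients of the proof of Theorem \ref{main_thm}, including the local holonomy estimates of Sections~\ref{sec: comparison between affine spheres}--\ref{sec:ODEs_par_trans} and the unipotent-indexing results of Section~\ref{sec:no branching}, apply verbatim, so the argument splits cleanly into the case of a generic geodesic, which is already done, and the case of geodesics with segments along Weyl walls.

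First I would refine Lemma \ref{expansion of A(s)} to produce the expansion \eqref{eq:A(s)-multiple-large-entries}. When $D(c_i)^{-1}$ has a unique largest eigenvalue in position $(j_i,k_i)$, the $i$-th factor contributes $c_{j_i,k_i}S E_{j_i,k_i} S^{-1}$ at leading order; when it has two largest eigenvalues, the $i$-th factor contributes a sum of \emph{two} such elementary-matrix terms $c_{j_{i_\alpha},k_{i_\alpha}} SE_{j_{i_\alpha},k_{i_\alpha}} S^{-1}$. The essential input is Proposition \ref{prop:positive-entries}: each coefficient $c_{j_{i_\alpha},k_{i_\alpha}}$ is \emph{strictly positive}, so when one expands the full matrix product no cancellation can take place among the leading-order terms. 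One still has the telescoping $E_{j_i,k_i}\cdot E_{j_{i+1},k_{i+1}}=E_{j_i,k_{i+1}}$ that matches outgoing indices of one zero to incoming indices of the next, because this matching depends only on the angle the segment makes with the positive axis in a natural coordinate, a datum preserved across zeros by construction.

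Then I would rerun the norm estimate of Theorem \ref{main_thm} line by line. Writing $\Hol_s(\tilde c_\gamma)=A(s)+E(s)$ and normalizing, the positivity of the coefficients guarantees that $A(s)/\|A(s)\|$ converges to a nonzero matrix $M$ of unit norm, so the logarithm $\log\|A(s)/\|A(s)\|+E(s)/\|A(s)\|\|$ is bounded, while the termwise bookkeeping of error contributions shows $\|E(s)\|_\infty=o(\|A(s)\|_\infty)$ exactly as before: each error factor of the form $o(\Id)$, $o(e^{s^{1/3}\tilde\mu^i})$, or $o(e^{s^{1/3}\tilde\lambda^i})$ is paired with matching diagonal pieces $D(\alpha_i\setminus\tilde\alpha_i)^{-1}$ and $D(\delta_i\setminus\tilde\delta_i)^{-1}$ to reassemble as $o(1)\cdot\prod_i e^{s^{1/3}\nu^i}$. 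Finally, the singular-value statement follows by specializing to $\|\cdot\|_2=\sigma_1(\cdot)$, then applying the same formula to the reversed path $c_\gamma^{-1}$ to extract $\sigma_3$, and using $\det\Hol_s(c_\gamma)=1$ to recover $\sigma_2$, as in Corollaries \ref{cor:highest_sing_value} and \ref{cor:all_sing_values}. The main obstacle is conceptual rather than computational: one must ensure that when two eigenvalues coincide, the relevant entries of $U(\theta_i,\theta_{i+1})^{-1}$ used in the telescoping remain nonzero and positive, and this is precisely the content of Propositions~\ref{prop:non-zero-entry} and \ref{prop:positive-entries}, which were set up to handle this case.
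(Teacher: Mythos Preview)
Your proposal is correct and follows essentially the same approach as the paper: the paper's own proof of Theorem~\ref{main_thm_general} likewise isolates the single remaining case of saddle connections along Weyl-chamber walls, writes the leading term of $A(s)$ as the sum in \eqref{eq:A(s)-multiple-large-entries}, invokes Proposition~\ref{prop:positive-entries} to rule out cancellation among the now-multiple top-order elementary-matrix contributions, and then observes that the norm and singular-value arguments of Theorem~\ref{main_thm} and Corollaries~\ref{cor:highest_sing_value}--\ref{cor:all_sing_values} go through unchanged. Your identification of the key obstacle and its resolution via the positivity of the relevant unipotent entries matches the paper exactly.
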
 

Because $\Hol_{s}(c_{\gamma})$ is diagonalizable with positive eigenvalues, Theorem \ref{main_thm_general} also implies a similar asymptotic formula for the eigenvalues of $\Hol_{s}(c_{\gamma})$. 

\begin{cor} \label{eigenvalues} For every closed curve $\gamma \in \pi_{1}(S)$, let $c_{\gamma}$ be the geodesic representative for the flat metric $|q_{0}|^{\frac{2}{3}}$. Assume that $c_{\gamma}$ is the concatenation of saddle connections ${c}_{1}, \dots, {c}_{l}$. Then
\[
	\lim_{s \to +\infty} \frac{\log(\Lambda(\Hol_{s}(\gamma)))}{s^{\frac{1}{3}}} = \sum_{i=1}^\ell \lim_{s \to +\infty} \frac{\log(\Lambda(\Hol_{s}({c}_{i})))}{s^{\frac{1}{3}}} = \sum_{i=1}^\ell \nu^i 
\]
where $\Lambda(M)$ denotes the spectral radius of $M$. \\
In particular, Corollary \ref{cor:all_sing_values} holds when replacing singular values with eigenvalues. 
\end{cor}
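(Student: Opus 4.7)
My plan is to sandwich $\Lambda(\Hol_s(\gamma))$ between an upper bound from any submultiplicative norm and a lower bound obtained from the trace, then collapse them using the decomposition $\Hol_s(\gamma)=A(s)+E(s)$ already in hand from Section~\ref{sec:ODEs_par_trans}. The upper bound is immediate: for any submultiplicative matrix norm one has the standard inequality $\Lambda(M)\leq \|M\|$ (a consequence of Gelfand's formula $\Lambda(M)=\lim_{n}\|M^{n}\|^{1/n}$ combined with submultiplicativity), so Theorem~\ref{main_thm_general} directly yields
\[
 \limsup_{s\to\infty}\frac{\log\Lambda(\Hol_s(\gamma))}{s^{1/3}}\leq\sum_{i=1}^\ell \nu^i.
\]

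The lower bound is the main content. I will invoke the standard fact, recalled in Section~\ref{sec:background} via the affine sphere picture, that Hitchin representations in $\SL(3,\R)$ send every non-trivial loop to a positively hyperbolic element: the three eigenvalues $\lambda_1\geq\lambda_2\geq\lambda_3$ of $\Hol_s(\gamma)$ are real and positive. This gives $\trace(\Hol_s(\gamma))\leq 3\lambda_1=3\Lambda(\Hol_s(\gamma))$, so it suffices to estimate $\trace(\Hol_s(\gamma))$ from below. By Lemma~\ref{expansion of A(s)},
\[
 A(s)=\Big(\prod_{i=1}^\ell c_{j_i,k_i}\, e^{s^{1/3}(\mu^i_{k_i}+\lambda^i_{j_i})}\Big)\cdot S\Big(\prod_{i=1}^\ell E_{j_i,k_i}(\Id+o(\Id))\Big)S^{-1}.
\]
Because $\gamma$ is a closed curve and $\alpha_\ell$, $\delta_1$ lie on the common saddle connection $c_1$ (Remark after Lemma~\ref{expansion of A(s)}, indices modulo $\ell$), the no-branching relations $k_i=j_{i+1}$ from Section~\ref{sec:no branching} hold cyclically, including $k_\ell=j_1$. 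The elementary matrices therefore telescope to the rank-one projector $E_{j_1,j_1}$, which has trace $1$. Using the reindexing identity $\sum_{i}(\mu^i_{k_i}+\lambda^i_{j_i})=\sum_{i}\nu^i$ from the proof of Theorem~\ref{main_thm}, this gives
\[
\trace(A(s))=\Big(\prod_{i=1}^\ell c_{j_i,k_i}\Big)\,e^{s^{1/3}\sum_{i}\nu^i}\,(1+o(1)).
\]
The prefactor is strictly positive by Proposition~\ref{prop:positive-entries}, while $|\trace(E(s))|\leq 3\|E(s)\|_\infty = o(\|A(s)\|_\infty)$ is of strictly smaller order. Hence $\liminf_{s}\log\Lambda(\Hol_s(\gamma))/s^{1/3}\geq \sum_{i}\nu^i$, closing the sandwich.

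The ``in particular'' extension to all three eigenvalues then follows exactly as in the proof of Corollary~\ref{cor:all_sing_values}: applying the spectral radius asymptotic just established to $\gamma^{-1}$ gives the asymptotic of $\lambda_3(\Hol_s(\gamma))^{-1}$, and $\det\Hol_s(\gamma)=1$ determines the middle eigenvalue $\lambda_2$.

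The main obstacle I anticipate is the degenerate case of Equation~\eqref{eq:A(s)-multiple-large-entries}, in which some $D(c_i)^{-1}$ has two dominant eigenvalues and the leading part of $A(s)$ becomes a sum of telescoped rank-one terms. There one must verify that the telescoping still produces at least one nonzero diagonal contribution to the trace rather than a sum of off-diagonal terms with vanishing trace. Strict positivity of all the $c_{j_i,k_i}$ from Proposition~\ref{prop:positive-entries} rules out any sign cancellations, but a careful cyclic bookkeeping of the summands -- parallel to what was done for singular values at the end of Section~\ref{sec: main theorem} -- is needed to guarantee that at least one summand survives on the diagonal after telescoping around the loop.
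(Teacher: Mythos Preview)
Your proposal is correct and takes a genuinely different route from the paper. The paper argues via Gelfand's formula: it writes $\Hol_s(\tilde c_\gamma)^r = A(s)^r + E'(s)$, bounds the error, and computes $\Lambda(A(s)) = \lim_r\|A(s)^r\|^{1/r} = e^{s^{1/3}\sum_i\nu^i}\,\Lambda(M+o(\Id))$, tracking a bounded multiplicative correction $C(s)$ through a double limit in $r$ and $s$. Your sandwich argument bypasses all of that: the upper bound is free from $\Lambda\leq\|\cdot\|$, and the lower bound exploits the cyclic telescoping $\prod_i E_{j_i,k_i}=E_{j_1,j_1}$ (from $k_i=j_{i+1}$ around the loop) to read off the trace directly. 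This is more elementary and makes the role of the closed-loop condition transparent; the paper's approach also ultimately rests on the same telescoping to ensure $\Lambda(M)>0$, but leaves that step implicit.

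Your anticipated obstacle in the wall-direction case is real but resolves cleanly, and you should say so rather than leave it as a worry. Let $J_i$ (respectively $K_i$) denote the set of positions of the largest eigenvalues of $D(\delta_i)^{-1}$ (respectively $D(\alpha_i)^{-1}$); since $\alpha_i$ and $\delta_{i+1}$ lie on the same segment $c_{i+1}$ one has $K_i=J_{i+1}$ cyclically. Expanding the product in \eqref{eq:A(s)-multiple-large-entries}, the terms contributing to the trace are exactly those with $k_i=j_{i+1}$ for all $i$ (so the elementary matrices telescope to $E_{j_1,j_1}$); any choice of $j_i\in J_i$ and $k_i:=j_{i+1}\in K_i$ produces such a term, and its coefficient $\prod_i (U_i^{-1})_{j_i,j_{i+1}}$ is strictly positive by Proposition~\ref{prop:positive-entries}. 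Hence the trace of the leading matrix is a nonempty sum of positive numbers, and your lower bound goes through unchanged.
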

\begin{proof} It is well known that the spectral radius, i.e. the absolute values of the largest (possibly complex) eigenvalue of a matrix $M$, can be computed as
\[
	\Lambda(M)=\lim_{r\to +\infty} \| M^{r} \|^{\frac{1}{r}} \ . 
\]
Since $\Hol_{s}(\tilde c_{\gamma})$ has all real and positive eigenvalues, its spectral radius coincides with its largest eigenvalue. Moreover, because $c_\gamma$ and $\tilde c_\gamma$ are in the same free homotopy class, we can do this computation for $\Hol_{s}(\tilde c_\gamma)$. Now, we know that
\[
	\Hol_{s}(\tilde c_\gamma)=A(s)+E(s)
\]
with $\| E(s)\| =o(\|A(s)\|)$ as $s\to +\infty$ and 
\begin{equation}\label{A(s)}
	A(s) = \prod_{i=1}^\ell e^{s^{\frac13} (\mu^i_{j_i} + \lambda^i_{k_i})} S \Big(\sum_{\alpha} c_{j_{i_{\alpha}},k_{i_{\alpha}}}E_{j_{i_{\alpha}},k_{i_{\alpha}}}\Big)({\rm Id} + o(\Id)) S^{-1}
\end{equation}
for some positive constants $c_{j_{i_{\alpha}},k_{i_{\alpha}}}$. Fix $\delta>0$ small and let $s_{0}$ be such that for all $s\geq s_{0}$ we have $\|E(s)\| \leq \delta \|A(s)\|$. Then, for all $s\geq s_{0}$
\[
	\Hol_{s}(\tilde c_\gamma)^{r} =  A(s)^{r} + E'(s) 
\]
with $\| E'(s) \| \leq 2r\delta\| A(s) \|^{r}$ for every integer $r>1$. Therefore,
\begin{eqnarray*}
	\Lambda(\Hol_{s}(\tilde c_\gamma)) & = & \lim_{r\to +\infty} \| \Hol_{s}(\tilde c_\gamma)^{r}\|^{\frac{1}{r}} \\
	& = & \lim_{r\to +\infty} \|A(s)^{r}\|^{\frac{1}{r}} \left\| \frac{A(s)^{r}}{\|A(s)^{r}\|}+\frac{E'(s)}{\|A(s)^{r}\|}\right\|^{\frac{1}{r}} \\ 
 	& = & \Lambda(A(s))C(s)
\end{eqnarray*}
for all $s\geq s_{0}$, where 
\[
	C(s) = \lim_{r\to +\infty}  \left\| \frac{A(s)^{r}}{\|A(s)^{r}\|}+\frac{E'(s)}{\|A(s)^{r}\|}\right\|^{\frac{1}{r}} \ . 
\]
First we compute the spectral radius $\Lambda(A(s))$ of the matrix $A(s)$. From \eqref{A(s)}, we find
\begin{eqnarray*}
	A(s)^{r} &=& \exp\left(\sum_{i=1}^\ell rs^{\frac{1}{3}}\nu^i\right) \left( \prod_{i=1}^{\ell} S \left( \sum_{\alpha} c_{j_{i_{\alpha}}, k_{i_{\alpha}}}E_{j_{i_\alpha}, k_{i_{\alpha}}} \right) (\Id+o(\Id)) S^{-1} \right)^{r} \\
		    &=&  \exp\left(\sum_{i=1}^\ell rs^{\frac{1}{3}}\nu^i\right)(M+o(\Id))^{r}
\end{eqnarray*}
as $s \to +\infty$, where 
\[
	M = \prod_{i=1}^{\ell} S \left( \sum_{\alpha} c_{j_{i_{\alpha}}, k_{i_{\alpha}}}E_{j_{i_\alpha}, k_{i_{\alpha}}} \right) S^{-1}  \neq 0
\]
Hence, for $s$ sufficiently large, 
\[
	\Lambda(A(s)) = \lim_{r\to +\infty} \|A(s)^{r}\|^{\frac{1}{r}} = \exp\left(\sum_{i=1}^\ell s^{\frac{1}{3}}\nu^i \right)\Lambda(M+o(\Id)) \ .
\]
We then observe that the function $C(s)$ is uniformly bounded for $s\geq s_{0}$, because
\begin{eqnarray*}
	 \left\| \frac{A(s)^{r}}{\|A(s)^{r}\|}+\frac{E'(s)}{\|A(s)^{r}\|}\right\| & \leq & 1+ \frac{\|E'(s)\|}{\| A(s) ^{r}\|} \\
	& \leq &1+\frac{2r\delta \|A(s)\|^{r}}{\|A(s)^{r}\|} \\
	&\leq & \frac{(1+2r\delta)\|M+o(\Id)\|^{r}}{\|(M+o(\Id))^{r}\|}
\end{eqnarray*}
which implies that $C(s) \leq \|M+o(\Id)\|\Lambda(M+o(\Id))^{-1}$. 
Therefore, 
\[
	\lim_{s \to +\infty} \frac{\log(\Lambda(\Hol_{s}(\tilde c_\gamma)))}{s^{\frac{1}{3}}}  = \lim_{s \to +\infty} \frac{\log(\Lambda(A(s)))}{s^{\frac{1}{3}}} = \sum_{i=1}^\ell \nu^i \ .
\]
Now, the saddle connection ${c}_{i}$ is the concatenation of ${\delta}_{i}$ and ${\alpha}_{i-1}$, where indices are intended modulo $\ell$ (see Figure \ref{fig:subpaths}), so by Theorem \ref{hol-arcs}, 
\[
    \lim_{s \to +\infty} \frac{\log(\Lambda(\Hol_{s}({c}_{i})))}{s^{\frac{1}{3}}} = \nu^i \ ,
\]
which gives the desired asymptotics of the largest eigenvalue. Repeating the same argument as in Corollary \ref{cor:all_sing_values}, the formula actually holds for all eigenvalues of $\Hol_{s}(\gamma)$.
\end{proof}

\section{Harmonic map to the real building} \label{sec: harmonic map to building}

By work of Hitchin (\cite{Hitchin_Teichmuller}), the Hitchin representations $\rho_{s}: \pi_{1}(S) \rightarrow \SL(3,\R)$ arising from the ray of cubic differentials $q_{s}=sq_{0}$ are constructed along with an associated $\rho_{s}$-equivariant conformal harmonic map $h_{s}:\tilde{\Sigma} \rightarrow \SL(3,\R)/\SO(3)$ to the symmetric space (see Section \ref{sec:background}).  
In this section we study both the asymptotic behavior of $h_{s}$ around a zero and also describe the geometry of the limiting harmonic map $h_{\infty}: \tilde{\Sigma} \rightarrow \mathcal{B}$ to an $\R$-building.

\subsection{Generalities on Euclidean buildings} We recall here the definition and main properties of $\R$-buildings. We direct the interested reader to \cite{Kleiner_Leeb_rigidity} for a more thorough discussion. \\

Let $\mathbb{A}$ denote a finite-dimensional affine Euclidean space. The Tits boundary of $\mathbb{A}$ is a sphere, denoted by $\partial_{Tits}\mathbb{A}$. A subgroup $W_{aff} \subset \Isom(\mathbb{A})$ is an affine Weyl group if it is generated by reflections across hyperplanes of $\mathbb{A}$, called walls, and its linear part $W_{lin}$ is finite. The pair $(\mathbb{A}, W_{aff})$ is a Euclidean Coxeter complex. We denote by $\Delta_{mod}$ the quotient $\partial_{Tits}\mathbb{A}/W_{lin}$. \\

An oriented geodesic ray determines a point in $\partial_{Tits}\mathbb{A}$.  Its $\Delta_{mod}$-direction is its projection to $\Delta_{mod}$. A {\it Weyl chamber} with tip at $p \in \mathbb{A}$ is a complete cone with vertex at $p$ for which its Tits boundary is a $\Delta_{mod}$ chamber. A germ of a Weyl chamber based at $p \in \mathbb{A}$ is an equivalence class of Weyl chambers based at $p$ for the following equivalence relation: $W$ and $W'$ are equivalent if their intersection is a neighborhood of $p$ in both $W$ and $W'$. The germ of a Weyl chamber is denoted by $\Delta_{p}W$. We say that two germs $\Delta_{p}W$ and $\Delta_{p}W'$ are opposite if one is the image of the other under the longest element in the Weyl group $W_{lin}$. We say that two Weyl chambers based at $p$ are opposite if their germs are.

\begin{deff} A Euclidean $\R$-building modeled on a Euclidean Coxeter complex $(\mathbb{A}, W_{aff})$ is a $CAT(0)$ space $\mathcal{B}$ that satisfies the following axioms:
\begin{enumerate}[a)]
 \item Each oriented geodesic segment $\overline{xy}$ is assigned a $\Delta_{mod}$-direction $\theta(\overline{xy}) \in \Delta_{mod}$. For any pair of oriented geodesic segments $\overline{xy}$ and $\overline{xz}$ emanating from the same point $x \in \mathcal{B}$, the difference of their $\Delta_{mod}$-directions is smaller than their comparison angle;
 \item Given $\delta_{1}, \delta_{2} \in \Delta_{mod}$, denote by $D(\delta_{1}, \delta_{2})$ the finite set given by all of the possible distances between points in their $W_{aff}$ orbit. The angle between any two geodesic segments $\overline{xy}$ and $\overline{xz}$ lies in the finite set $D(\theta(\overline{xy}), \theta(\overline{xz}))$.
 \item There is a collection $\mathcal{A}$ of isometric embeddings $\iota_{A}: \mathbb{A} \rightarrow \mathcal{B}$ that preserve $\Delta_{mod}$-directions and that is closed under precomposition by isometries in $W_{aff}$. Each image $A=\iota_{A}(\mathbb{A})$ is called an apartment of $\mathcal{B}$. Each geodesic segment, ray and complete geodesic is contained in an apartment.
 \item Coordinate charts $\{\iota_{A}\}_{A \in \mathcal{A}}$ are compatible in the sense that, when defined, $\iota_{A_{1}} \circ \iota_{A_{2}}^{-1}$ is the restriction of an isometry in the Weyl group $W_{aff}$. 
\end{enumerate}
\end{deff}

\begin{remark}\label{rmk:equivalent-axioms} Many different, though equivalent, sets of axioms of Euclidean $\R$-buildings appear in the literature. For a detailed discussion we refer the reader to \cite{axioms_buildings}.
\end{remark}

It follows immediately from the axioms that any two points $x,y \in \mathcal{B}$ are contained in a common apartment $A$ and the distance between them coincides with the Euclidean distance computed inside $A$. Moreover, we can define a (germ of a) Weyl chamber in $\mathcal{B}$ as the image of a (germ of a) Weyl chamber in $\mathbb{A}$ under some chart $\iota_{A}$. The following property will be useful.

\begin{prop}[\cite{parreau2012compactification}]\label{prop:opposite_chambers} Two opposite Weyl chambers based at $p \in \mathcal{B}$ are contained in a unique apartment.
\end{prop}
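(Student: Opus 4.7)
The plan is to split the proposition into an existence claim and a uniqueness claim, exploiting the fact that two opposite Weyl chambers at $p$ contain enough directional data at $p$ to pin down an entire apartment.

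For existence, I would fix interior points $x \in W_1$ and $y \in W_2$ equidistant from $p$ and first show that the geodesic segment $\overline{xy}$ passes through $p$. This follows from axiom (a): the $\Delta_{mod}$-directions $\theta(\overline{px})$ and $\theta(\overline{py})$ are related by the longest element of $W_{lin}$, so in a germ model at $p$ they are antipodal; combined with the CAT(0) condition that the comparison angle at $p$ is at most $\pi$, this forces $\angle_p(x,y) = \pi$, and hence $p \in \overline{xy}$. Axiom (c) then produces an apartment $A$ containing $\overline{xy}$, and in particular containing $p$. To upgrade this to an apartment containing all of $W_1 \cup W_2$, I would let $x$ vary throughout the interior of $W_1$ with the opposite point $y \in W_2$ varying correspondingly; each new segment again passes through $p$ and lies in some apartment, and using axiom (d) together with the fact that the germ of an apartment at $p$ is determined by a single top-dimensional Weyl chamber, one can identify all of these apartments with the original $A$.

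For uniqueness, suppose $A$ and $A'$ are two apartments both containing $W_1$ and $W_2$. Axiom (d) says the transition map $\iota_A^{-1} \circ \iota_{A'}$ is a restriction of some $w \in W_{aff}$, and by hypothesis $w$ fixes the preimages of both $W_1$ and $W_2$ pointwise. Thus $w$ is an affine isometry of $\mathbb{A}$ fixing two opposite Weyl chambers based at a common point, hence fixes a set with non-empty interior; as an element of an affine Weyl group acting on $\mathbb{A}$ this forces $w$ to be the identity, and therefore $A = A'$.

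The main obstacle lies in the existence step, specifically in upgrading a single apartment containing one segment $\overline{xy}$ through $p$ to an apartment containing the two full Weyl chambers. Any single such segment only captures one radial direction within $W_1$, whereas a Weyl chamber has a full open set of such directions at $p$, and a priori different choices of interior points $x$ could produce distinct candidate apartments that would then need to be glued. Dispatching this requires careful use of axiom (c) on suitably chosen pairs of points in $W_1 \cup W_2$ together with the rigidity provided by axiom (d); depending on which of the equivalent axiom schemes one adopts (Remark~\ref{rmk:equivalent-axioms}), one may prefer to invoke a version that bundles this extension step into a single cleaner statement, such as the existence of a flat containing any two opposite Weyl chamber germs at a point.
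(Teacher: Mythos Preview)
The paper does not prove this proposition at all; it simply quotes it from \cite{parreau2012compactification}. So there is no ``paper's own proof'' to compare against, and your sketch is an independent attempt.

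On the merits of your sketch: the existence half is reasonable and you correctly flag the genuine difficulty, namely upgrading from a single apartment containing one segment $\overline{xy}$ to an apartment containing the full chambers $W_1,W_2$. The clean way through is to choose $x$ in the \emph{interior} of $W_1$ so that the direction $\theta(\overline{px})$ is regular; then the geodesic line through $p$ in that direction is a regular line, and in a Euclidean building any apartment containing a regular geodesic line is the unique maximal flat containing it. Combined with the fact that a Weyl chamber is determined by its germ at the tip, this forces the apartment produced by axiom~(c) to contain all of $W_1$ and $W_2$, and simultaneously handles uniqueness.

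Your uniqueness argument, as written, has a gap. Axiom~(d) only says that $\iota_A^{-1}\circ\iota_{A'}$ is the restriction of some $w\in W_{\mathrm{aff}}$ to $\iota_{A'}^{-1}(A\cap A')$; it does not say that $w$ \emph{fixes} the preimages of $W_1,W_2$ pointwise, only that it carries $\iota_{A'}^{-1}(W_i)$ to $\iota_A^{-1}(W_i)$, and there is no a~priori reason these two subsets of $\mathbb A$ coincide. Even if one could arrange $w=\mathrm{id}$, that would only give $\iota_A=\iota_{A'}$ on the overlap, and you would still need to argue that two isometric embeddings $\mathbb A\hookrightarrow\mathcal B$ agreeing on an open set agree globally; this is not automatic in a CAT(0) space because geodesic extension need not be unique. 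The regular-line argument above sidesteps both issues.
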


The boundary at infinity of $\B$ is defined as the set of equivalence classes of geodesic rays, where two rays are equivalent if they remain at bounded distance. Given any $\xi \in \partial_{\infty}\B$ and $p\in \B$, there is a unique geodesic ray $\xi_{p}$ in the equivalence class of $\xi$ starting at $p$. \\

Given a point $p \in \B$, and two geodesic segments $c_{1}, c_{2};[0,1] \rightarrow \B$ such that $c_{j}(0)=p$, the angle between them is the quantity
\[	
 	\angle_{p}(c_{1}, c_{2}) = \lim_{s,t\to 0} \widetilde{\angle}_{p}(c_{1}(s), c_{2}(t))
\]
where $\widetilde{\angle}_{p}$ denotes the angle of the Euclidean comparison triangle. This induces a distance on the set $\Sigma_{p}\B$ of  equivalence classes of geodesic segments emanating from $p$, where two segments are identified if the angle between them is zero.

\subsection{Asymptotic cone of $\SL(3,\R)/\SO(3)$}
We denote by $(X,d)$ the symmetric space $\SL(3,\R)/\SO(3)$ endowed with the distance induced by its homogeneous Riemannian metric. The construction of the asymptotic cone of $(X,d)$ and, more generally, of any metric space relies on the choice of a non-principal ultrafilter. 

\begin{deff} A non-principal ultrafilter is a finitely additive probability measure $\omega$ on $\mathcal{P}(\R)$ such that
\begin{enumerate}
 \item $\omega(S) \in \{0,1\}$ for every $S \in \mathcal{P}(\R)$ ;
 \item $\omega(S)=0$ for every finite subset $S$.
\end{enumerate}
\end{deff}
As we are interested in the behavior as $s\to+\infty$, we only consider non-principal ultrafilters each supported on a countable subset of $\R$ whose only limit point in $[-\infty,+\infty]$ is $+\infty$. Non-principal ultafilters allow us to consistently  define limits of bounded sequences without passing to subsequences. Precisely, a family of points $\{y_{s}\}_{s \in \R}$ in a topological space $Y$ is said to have a $\omega$-limit $y$, denoted by $y=\lim_{\omega}y_{s}$ if for each neighborhood $\sU$ of $y$ we have $\omega(\{s \in \R \ | \ y_{s} \in \sU\})=1$. 

\begin{deff} Let $*$ be a base point in $(X,d)$ and let $\lambda_{s}\to\infty$ be a sequence of scaling factors. Fix a non-principal ultrafilter $\omega$. The asymptotic cone of $(X, \lambda_{s}^{-1}d, *)$ is the metric space $(\Cone_{\omega}(X, \lambda_{s}, *), d_{\omega})$ where
\begin{enumerate}
 \item points in $\Cone_{\omega}(X, \lambda_{s}, *)$ are equivalence classes of families $x_{s} \in X$ such that $\lambda_{s}^{-1}d(x_{s}, *)$ is bounded. Here, two families $x_{s}, y_{s}$ are equivalent if $\lim_{\omega}\lambda_{s}^{-1}d(x_{s},y_{s})=0$ ;
 \item the distance between two points $[x_{s}]$ and $[y_{s}]$ is defined as
\[
    d_{\omega}([x_{s}], [y_{s}]):=\lim_{\omega}\lambda_{s}^{-1}d(x_{s},y_{s}) \ .
\]
\end{enumerate}
\end{deff}

By work of \cite{Thornton_thesis}, the asymptotic cone of the symmetric space $(X,d)$ is actually, up to isometries, independent of the choice of the ultrafilter $\omega$ (if we assume the continuum hypothesis) and of the base point $*$. Moreover, the asymptotic cone of the symmetric space $(X,d)$ can also be interpreted as the Gromov-Hausdorff limit of the pointed sequence of metric spaces $X_{s}=(X, \lambda_{s}^{-1}d, *)$ (\cite{Gromov_poly} \cite{Kleiner_Leeb_rigidity}) and it is a non-discrete Euclidean building modelled on the affine Weyl group of $\mathfrak{sl}(3,\R)$. In particular, we are going to identify the model Euclidean plane $\mathbb{A}$ with 
\[
    \mathbb{A}=\{(x_{1},x_{2},x_{3}) \in \R^{3} \ | \ x_{1}+x_{2}+x_{3}=0\} \ .
\]
Then the linear part of the Weyl group consists of reflections across the walls of equation $x_{i}-x_{j}=0$ for all $i \neq j$. In particular, we identify $\Delta_{mod}$ with the boundary at infinity of the Weyl chamber
\[
    \mathfrak{a}^{+}=\{ (x_{1},x_{2},x_{3}) \in \R^{3} \ | \ x_{1}>x_{2}>x_{3}\} \ .
\]
Given $x,y \in \Cone_{\omega}(X)$, we can find an apartment $\iota_{A}: \mathbb{A} \rightarrow \Cone_{\omega}(X)$ such that $\iota_{A}(0)=x$ and $\iota_{A}^{-1}(y) \in \mathfrak{a}^{+}$. Then the distance between $x$ and $y$ is 
\[
        d_{\omega}(x,y)=d_{\mathbb{A}}(0,\iota_{A}^{-1}(y))=\sqrt{y_{1}^{2}+y_{2}^{2}+y_{3}^{2}} \ ,
\]
where $y_{i}$ are the coordinates of $\iota_{A}^{-1}(y)$. \\

Beside the Euclidean distance on an apartment, it is also useful to consider the $\mathfrak{a}^{+}$-valued distance defined by
\[
    d_{\omega}^{\mathfrak{a}^{+}}(x,y)=d_{\mathbb{A}}^{\mathfrak{a}^{+}}(0,\iota_{A}^{-1}(y))=(y_{1},y_{2},y_{3}) \ .
\]
By a theorem of Parreau (\cite{parreau2012compactification}), the $\mathfrak{a}^{+}$-valued distance on $\Cone_{\omega}(X)$ is the $\omega$-limit of the analogously defined $\mathfrak{a}^{+}$-valued distance $d^{\mathfrak{a}^{+}}$ on $X$ rescaled by $\lambda_{s}^{-1}$. (Here this distance $d^{\mathfrak{a}^{+}}(x,y)$ on $X$ relies on finding a flat that contains $x$ and $y$.)  In other words, if $x=[x_{s}], y=[y_{s}] \in \Cone_{\omega}(X)$ with $x_{s}, y_{s} \in X$, then
\[
 d_{\omega}^{\mathfrak{a}^{+}}(x,y)=\lim_{\omega}\lambda_{s}^{-1}d^{\mathfrak{a}^{+}}(x_{s},y_{s}) \ .
\]

An apartment in $\Cone_{\omega}(X)$ can be obtained as the $\omega$-limit of a sequence of flats in $X$. Precisely, if $\iota_{F_{s}}: \mathbb{A} \rightarrow F_{s} \subset X$ are isometric parametrizations of a sequence of maximal flats $F_{s}$ of $X$ with the property that $d(F_{s}, *)=O(\lambda_{s})$, then the family $\iota_{F_{s}}$ has an $\omega$-limit $\iota_{F}:\mathbb{A} \rightarrow \Cone_{\omega}(X)$ which defines an apartment in $\Cone_{\omega}(X)$. See \cite{Kleiner_Leeb_rigidity} for more details. \\

A family $G_{s} \in \SL(3,\R)$ of isometries of $X$ also induces an isometric action on $\Cone_{\omega}(X)$ provided that $d(G_{s}(*),*)=O(\lambda_{s})$ by setting
\[
    G_{s}\cdot [x_{s}]:=[G_{s}(x_{s})]
\]
for any $[x_{s}] \in \Cone_{\omega}(X)$.

\subsection{Limiting harmonic map to the building}
Given a ray of cubic differentials $sq_{0}$, we consider the family of conformal harmonic maps $h_{s}:\tilde{\Sigma} \rightarrow X$ that are equivariant under the corresponding Hitchin representations $\rho_{s}:\pi_{1}(\Sigma) \rightarrow \SL(3,\R)$. We fix a non-principal ultrafilter $\omega$, a base point $* \in X$ and the sequence of scaling factors $\lambda_{s}=s^{\frac{1}{3}}$. We can consider the maps $h_{s}$ to take values in the re-scaled pointed metric spaces $X_{s}=((X,*), s^{-\frac{1}{3}}d)$. The following result is due to Semin Kim 
(see also \cite[Proposition 8.8]{Sagman-Smillie:LocalAsymptotics}), 
but we include a sketch of the proof to justify our choice of scaling factors.

\begin{prop}[\cite{Semin_thesis},\cite{Sagman-Smillie:LocalAsymptotics}] \label{prop:lim-harm-map} 
The family $h_{s}: \tilde{\Sigma}\rightarrow X_{s}$ converges to a Lipschitz equivariant harmonic map $h_{\infty}: \tilde{\Sigma} \rightarrow \Cone_{\omega}(X)$. The family of holonomy maps $\rho_s\!:X_s\to X_s$ $\omega$-converges to an isometry $\rho_\infty$ of $\Cone_\omega(X)$, and $h_\infty$ is equivariant with respect to $\rho_\infty$. 
\end{prop}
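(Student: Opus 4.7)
The plan proceeds in four steps: establish uniform Lipschitz control on the rescaled maps, extract the $\omega$-limit, push the holonomy to the limit, and verify harmonicity.

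First, I would show a uniform Lipschitz bound for $h_s\colon \tilde\Sigma \to X_s$. The pullback of the symmetric space metric by the Blaschke lift $h_s$ is comparable to the Blaschke metric $g_s = e^{\mu_s}\sigma$ (by the explicit description in Section~\ref{sec:background}). Away from the zeros of $q_0$, the exponential decay estimate $\mathcal F_s = O(s^{1/6}e^{-m(s)})$ of Lemma~\ref{error-decay} gives $g_s \leq C\, s^{2/3} |q_0|^{2/3}$. Near each zero, Lemma~\ref{lm:decay_ball} together with the model bounds from \cite{DW} gives $g_s \leq C\, s^{2/3} h_s^{\mathrm{model}}$ on the ball $B$, where $h_s^{\mathrm{model}}$ is the Blaschke metric of the polynomial affine sphere (which is integrable and bounded in local flat coordinates). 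In either case, after rescaling the target by $s^{-1/3}$, the map $h_s\colon (\tilde\Sigma, g_0) \to X_s$ is Lipschitz with a constant independent of $s$, where $g_0$ is any fixed smooth Riemannian metric on $\tilde\Sigma$.

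Second, fix $\tilde p_0 \in \tilde\Sigma$ and let $*_s := h_s(\tilde p_0) \in X_s$. The Lipschitz bound just obtained guarantees that for every $\tilde p \in \tilde\Sigma$ the family $(h_s(\tilde p))_s$ satisfies $s^{-1/3}\, d(h_s(\tilde p), *_s) = O(d_{g_0}(\tilde p, \tilde p_0))$, so it defines a well-posed point $h_\infty(\tilde p) \in \Cone_\omega(X)$. The resulting map $h_\infty\colon \tilde\Sigma \to \Cone_\omega(X)$ inherits the uniform Lipschitz constant from the sequence.

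Third, to pass the holonomy to the limit, I would apply Theorem~\ref{main_thm_general} (and its corollary for eigenvalues, Corollary~\ref{eigenvalues}) to each generator of $\pi_1(\Sigma)$: the singular values of $\rho_s([\gamma])$ grow at most like $\exp(s^{1/3}\,\nu([\gamma]))$, so $s^{-1/3} d(\rho_s([\gamma])\cdot *_s, *_s) = O(1)$. The action of $\rho_s([\gamma])$ therefore descends to an isometric action $\rho_\infty([\gamma])\cdot [x_s] := [\rho_s([\gamma])\cdot x_s]$ on $\Cone_\omega(X)$, giving a representation $\rho_\infty\colon \pi_1(\Sigma) \to \Isom(\Cone_\omega(X))$. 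The equivariance $h_s\circ\gamma = \rho_s([\gamma])\circ h_s$ holds at each finite scale and so passes to the $\omega$-limit, yielding $h_\infty\circ\gamma = \rho_\infty([\gamma])\circ h_\infty$.

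Finally, the harmonicity of $h_\infty$ follows from the Korevaar--Schoen theory of harmonic maps into $\mathrm{NPC}$ spaces, applied to ultralimits: each $h_s$ is an energy minimizer in its $\rho_s$-equivariant class, the rescaled energy densities $s^{-2/3} e(h_s)$ are uniformly bounded on compact sets of $\tilde\Sigma$ by the Lipschitz estimate, and energy is lower semi-continuous under $\omega$-convergence of maps into $\mathrm{CAT}(0)$ targets; hence $h_\infty$ minimizes energy in its $\rho_\infty$-equivariant class and is therefore harmonic. I expect the main obstacle to be the passage of energy through the zeros of $q_0$, where the comparison between $h_s$ and the Euclidean model breaks down; however, this is precisely controlled by the estimates of Section~\ref{sec: comparison between affine spheres} (Lemma~\ref{lm:decay_ball} and Proposition~\ref{Theta-conj-by-FM-small}), which ensure that no energy is lost in the ultralimit on arbitrarily small neighborhoods of the (finitely many) zeros.
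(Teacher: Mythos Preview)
Your approach is correct but takes a different, more laborious route than the paper.  The paper's proof is essentially three lines: it invokes a black-box theorem of Daskalopoulos--Mese (\cite[Theorem 1.2]{DM_harmonic2complex}) which reduces the existence of the Lipschitz equivariant harmonic limit to a single bound, namely that the equivariant energy of $h_s$ is $O(s^{2/3})$.  Since $h_s$ is conformal, its energy equals twice the area of the image of a fundamental domain; the induced metric is uniformly bi-Lipschitz to the Blaschke metric via the exact relation $\hat g_s = 2(1+\tfrac12 e^{-3\mathcal F_s})g_s$, and the Blaschke area is $O(s^{2/3})$ by the global bound of Lemma~\ref{norm-qs-bounds}.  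Convergence of the holonomy and equivariance are then read off from Parreau \cite{parreau2012compactification}.

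By contrast, you rebuild the limiting construction by hand: pointwise Lipschitz control (splitting into the region near and far from zeros), direct extraction of the $\omega$-limit, and then Korevaar--Schoen lower semicontinuity for harmonicity.  This works, and has the virtue of being self-contained, but it misses the two shortcuts the paper exploits: conformality (which converts the pointwise Lipschitz question into a single area integral) and the pre-packaged convergence theorem.  Note also that your Step~3 is overkill: once you have the uniform Lipschitz bound of Step~1, equivariance of $h_s$ immediately gives $s^{-1/3}d(\rho_s(\gamma)\!\cdot\! *_s,*_s)=s^{-1/3}d(h_s(\gamma\tilde p_0),h_s(\tilde p_0))=O(1)$, so there is no need to invoke Theorem~\ref{main_thm_general}.
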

\begin{proof}[Sketch of proof] By \cite[Theorem 1.2]{DM_harmonic2complex} and \cite[Theorem 3.1.2]{Semin_thesis}, it is sufficient to show that energy of the maps $h_{s}:\tilde{\Sigma} \rightarrow X$ grows as $O(s^{\frac{2}{3}})$. Since $h_{s}$ is conformal, this amounts to estimating the area of $h_{s}(D)$, where $D \subset \tilde{\Sigma}$ is a compact fundamental domain for the action of $\pi_{1}(S)$. Now, the induced metric $\hat{g}_{s}$ on the minimal surfaces $h_{s}(\tilde{\Sigma})$ can be written in terms of the Blaschke metric $g_{s}$ as (see \cite{QL_minimal}) 
\begin{equation}\label{eq:min-Blaschke}
    \hat{g}_{s}=2\left(1+\frac{e^{-3\mathcal{F}_{s}}}{2}\right)g_{s} \ ,
\end{equation}
hence $\hat{g}_{s}$ is uniformly bi-Lipshitz to $g_{s}$ and the result follows from Lemma \ref{norm-qs-bounds}.  

The $\omega$-convergence of the holonomy maps follows from Remark 3.19 in \cite{parreau2012compactification}.  The $\rho_\infty$-equivariance of $h_\infty$ follows from the fact that each $h_s$ is equivariant with respect to $\rho_s$. 
\end{proof}

The behavior of the limiting harmonic map $h_{\infty}:\tilde{\Sigma} \rightarrow \Cone_{\omega}(X)$ is well-known outside the zeros of the cubic differential $q_{0}$ (\cite[Theorem 2.17]{Mochizuki_asymptotics}). 

\begin{thm}\label{thm:limit_outside_zeros} For any $p \in \tilde{\Sigma}$ that is not a lift of a zero of the cubic differential $q_{0}$,  there is a neighborhood $\sU_{p}$ centered at $p$ and an apartment $\iota_{A}:\mathbb{A}\rightarrow \Cone_{\omega}(X)$ with $\iota_{A}(0)=h_{\infty}(p)$ such that
\begin{enumerate}[i)]
 \item the induced distance on $h_{\infty}(\sU_{p})$ is $\sqrt{3}\cdot2^\frac{1}{6}d_{q_{0}}$;
 \item the limiting harmonic map $h_{\infty}$ sends $\sU_{p}$ inside $A=\iota_{A}(\mathbb{A})$;
 \item for any $q \in \sU_{p}$ we have
 \[
    \iota_{A}^{-1} \circ h_{\infty}(q)=\left(-2^{\frac{2}{3}}\Ree\left(\int_{p}^{q} \phi_{1}\right), -2^{\frac{2}{3}}\Ree\left(\int_{p}^{q} \phi_{2}\right), -2^{\frac{2}{3}}\Ree\left(\int_{p}^{q} \phi_{3}\right) \right)
 \]
where $\phi_{i}$ are the cube roots of $q_{0}$ (which are well-defined in $\sU_{p}$).
\end{enumerate}
\end{thm}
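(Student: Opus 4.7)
The plan is to recognize that near a non-singular point $p$, the affine sphere is essentially a \c{T}i\c{t}eica surface (after rescaling), so the Blaschke lift $h_s$ takes values in a single maximal flat up to vanishing errors. First I would pick a simply connected neighborhood $U_p$ containing no zeros of $q_0$, together with a natural coordinate $w$ with $q_0 = dw^3$ and $w(p)=0$; in this coordinate the three cube roots are $\phi_j = \omega^{j-1}\,dw$ where $\omega = e^{2\pi i/3}$, and the $|q_0|^{2/3}$ metric is just $|dw|^2$.

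Next, I would apply the asymptotic comparison of frames developed in Section~\ref{sec: comparison between affine spheres}, but in its simpler form far from the zero locus: since $U_p$ lies at positive $q_0$-distance from the zeros, Lemma~\ref{error-decay} gives $\mathcal F_s = O(s^{1/6}e^{-m(s)})$, and hence by arguments parallel to Lemma~\ref{lm:decay_ball} and Lemma~\ref{Theta-bound} the frame field satisfies
\begin{equation*}
 F_s(q)F_s(p)^{-1} = (\Id + o(\Id))\, S\exp\bigl((z(q)-z(p))\,D(\theta)\bigr)S^{-1}\,(\Id + o(\Id)),
\end{equation*}
where $z = s^{1/3}w$ is the natural coordinate for $q_s = sq_0$ and $D(\theta)$ is the diagonal matrix of \eqref{eq:d(theta)}. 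The middle factor belongs to a Cartan subgroup of $\SL(3,\R)$, so the corresponding translates $h_s(q) = F_s(q)^{-\top}M_s(q)F_s(q)^{-1}$ of the base-point $h_s(p)$ lie, to leading order, in a single maximal flat $\mathcal F_s \subset X$ based at $h_s(p)$.

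I would then pass to the $\omega$-limit. Parametrize $\mathcal F_s$ by $\mathbb{A} = \{x_1+x_2+x_3=0\}$ so that $h_s(p) \mapsto 0$ and the diagonal Cartan matches the standard identification; the $\omega$-limit $\iota_A\!:\mathbb{A}\to \Cone_\omega(X)$ is an apartment containing $h_\infty(U_p)$ by the description of apartments recalled after Proposition~\ref{prop:lim-harm-map}. Rescaling by $s^{-1/3}$ absorbs the $s^{1/3}$ in $z = s^{1/3}w$, and the explicit diagonal entries of $D(\theta)$ combined with the sign coming from the inversion $F^{-1}$ in the Blaschke lift give
\begin{equation*}
 \iota_A^{-1}(h_\infty(q)) = \bigl(-2^{2/3}\Ree(w(q)),\, -2^{2/3}\Ree(\omega^{-1}w(q)),\, -2^{2/3}\Ree(\omega^{-2}w(q))\bigr),
\end{equation*}
which is exactly $\bigl(-2^{2/3}\Ree\int_p^q\phi_j\bigr)_{j=1,2,3}$, establishing (ii) and (iii). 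Statement (i) then falls out of the standard identity $\sum_{j=0}^2\cos^2(\theta + 2\pi j/3) = 3/2$: for $q\in U_p$ with $w(q)=re^{i\theta}$,
\begin{equation*}
 d_\omega(h_\infty(p), h_\infty(q))^2 = 2^{4/3}\sum_{j=0}^2 \Ree(\omega^{-j}w(q))^2 = 2^{4/3}\cdot \tfrac{3}{2}\,r^2 = 3\cdot 2^{1/3}\, d_{q_0}(p,q)^2,
\end{equation*}
so $d_\omega = \sqrt{3}\cdot 2^{1/6}\, d_{q_0}$.

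The main technical obstacle, which the preceding sections already largely handle, is step two: translating the frame comparison Theorem~\ref{hol-arcs} (and its ingredients) into a statement about the actual harmonic map $h_s$ with uniform control on the sub-leading errors. Specifically, I need that the multiplicative errors of type $\Id + o(\Id)$ produced by Lemma~\ref{Theta-bound} and the analogous estimates away from zeros produce only errors of size $o(e^{\epsilon s^{1/3}})$ for every $\epsilon>0$, so that after the $s^{-1/3}$-rescaling they disappear in $\Cone_\omega(X)$. This is precisely the regime where Lemma~\ref{error-decay} is strong enough (exponential-in-$s^{1/3}$ decay), while the potential growth of $F_M$ in the conjugation is controlled by the fact that we stay in the region $B\setminus U$ of Proposition~\ref{Theta-conj-by-FM-small}.
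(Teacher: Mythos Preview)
Your approach is correct and genuinely different from the paper's.  The paper first computes $d_\omega(h_\infty(q),h_\infty(q'))$ for \emph{all} pairs $q,q'\in U_p$ via the singular-value asymptotics of $\Hol_s$ (Proposition~\ref{hol-rays} and Corollary~\ref{cor:all_sing_values}), concludes that $h_\infty(U_p)$ is isometric to a piece of Euclidean plane and hence totally geodesic, and then invokes a building-theoretic extension result (\cite[Theorem~11.53]{AB08}) to place this flat piece inside an apartment.  You instead construct the apartment directly as the $\omega$-limit of a family of maximal flats $\mathcal F_s\subset X$ and show $h_s(U_p)$ is $o(s^{1/3})$-close to $\mathcal F_s$ via a frame comparison $F_s=(\Id+o(\Id))F_T(s^{1/3}w)$ to the \c{T}i\c{t}eica model.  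Your route is more constructive and sidesteps the external citation; the paper's route is perhaps cleaner in that it reuses the holonomy estimates already proved and never needs to identify a specific sequence of flats.

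Two remarks on the technical point you flag at the end.  First, the conjugation blow-up you worry about does not actually occur: the osculation error $G_s$ multiplies $F_T$ on the \emph{left}, and since the Blaschke lift is $h=(\hat F^\top)^{-1}\hat F^{-1}$, a left factor $G_s$ contributes $d(G_s^{-\top}G_s^{-1},\Id)=o(1)$ to the symmetric-space distance without being conjugated by the large $F_T$.  What does require a word is the right factor coming from the change of orthonormal frame $R_s$ versus $R_T$ (the $P_s$ matrices in the paper's notation); fortunately $R_T^{-1}R_s$ is diagonal and $\Id+o(1)$, and its contribution is again $o(1)$.  Second, your reference to Proposition~\ref{Theta-conj-by-FM-small} and the region $B\setminus U$ is misplaced: that estimate governs the comparison to the \emph{polynomial} model $F_M$ near a zero, where Stokes lines matter.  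Far from the zeros you compare directly to $F_T$, and the analogue of Corollary~\ref{Gs-to-one} is strictly simpler since no Stokes directions intervene.
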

\begin{proof} Let $\sU_p$ be a $q_0$-disk around $p$ that avoids neighborhoods of zeroes of $q_0$. From Equation \ref{eq:min-Blaschke} and Lemma \ref{error-decay}, we know that the induced metric $\hat{g}_{s}$ rescaled by $s^{-\frac{2}{3}}$ converges to $3\cdot 2^{\frac{1}{3}}|\tilde{q}_{0}|^{\frac{2}{3}}$ uniformly on $h_{\infty}(\sU_{p})$. To conclude that $h_{\infty}$ sends $\sU_{p}$ inside a single apartment, it is sufficient to show that $h_{\infty}(\sU_{p})$ is totally geodesic: this follows from an extendibility feature of flat neighborhoods in buildings (see \cite[Theorem~11.53]{AB08}). To this aim we show that for every $q,q' \in \sU_{p}$ we have that 
\begin{equation}\label{eq:distances}
    d_{\omega}(h_{\infty}(q), h_{\infty}(q'))=\sqrt{3}\cdot2^{\frac{1}{6}}d_{q_{0}}(q,q') .
\end{equation}
Since the asymptotic cone is a $CAT(0)$ space, this implies that the unique geodesic connecting $h_{\infty}(q)$ and $h_{\infty}(q')$ is entirely contained in the image of $h_{\infty}$, hence $h_{\infty}(\sU_{p})$ is totally geodesic inside $\Cone_{\omega}(X)$. We are thus left to prove Equation \ref{eq:distances}. Fix a natural coordinate $w$ on $\sU_{p}$ and let $w_{0}$ and $w_{0}'$ be the coordinates of $q$ and $q'$ in this chart. We then parametrize the geodesic connecting $w_{0}$ and $w_{0}'$ as $\gamma(t)=w_{0}+te^{i\theta}$ with $t \in [0, L]$ so that $w_{0}'=w_{0}+Le^{i\theta}$. Recall that the map $h_{\infty}$ is the $\omega$-limit of the maps $h_{s}: \tilde{\Sigma} \rightarrow X_{s}$ that can be expressed as
\[
	 h_{s}(w)=P_{s}(0)F_{s}(w)P_{s}(w)^{-1}
\]
for some $P_{s} \in \SL(3,\C)$. Indeed, from Section \ref{sec:background}, we know that the equivariant harmonic map $h_{s}$ is simply given by the frame field $F_{s}$ of the affine sphere $f_{s}:\C \rightarrow \R^{3}$ whose columns form at each point $w \in \C$ a real basis of $\R^{3}$ that is orthonormal for (the lift of) the Blaschke metric. The matrices $P_{s}$ represent the change of frame between a real othonormal basis and the basis $\{1, \partial_{w}, \partial_{\bar{w}}\}$ induced by the natural coordinate $w$.
Therefore, 
\begingroup 
\allowdisplaybreaks
\begin{eqnarray} \label{eq: tree image}
 d_{\omega}(h_{\infty}(w_{0}), h_{\infty}(w_{0}')) &=& \lim_{s \to +\infty} s^{-\frac{1}{3}} d(h_{s}(w_{0}), h_{s}(w_{0})')  \notag \\
 &=&\lim_{s \to +\infty} s^{-\frac{1}{3}} d(F_{s}(w_{0})P_{s}^{-1}(w_{0}), F_{s}(w_{0}')P_{s}(w_{0}')^{-1}) \notag \\
 &=&\lim_{s \to +\infty} s^{-\frac{1}{3}} d(P_{s}(w_{0}')F_{s}(w_{0}')^{-1}F_{s}(w_{0})P_{s}^{-1}(w_{0}), \Id) \notag \\
 &=&\lim_{s \to +\infty} s^{-\frac{1}{3}} d(\Hol_{s}(\gamma), \Id) \notag \\
\end{eqnarray}
\endgroup
By Proposition \ref{hol-rays} (see also \cite{loftin2007flat}) and Corollary \ref{cor:all_sing_values} the singular values $\sigma_{j}(s)$ of $\Hol_{s}(\gamma)$ satisfy 
\[
    \lim_{s\to +\infty} s^{-\frac{1}{3}}\log(\sigma_{j}(s))=-2^{\frac{2}{3}}L\lambda_{j}
    \]
where $(\lambda_{1}, \lambda_{2}, \lambda_{3})$ is a reordering of $(\cos(\theta), \cos(\theta-2\pi/3), \cos(\theta-4\pi/3))$ such that $\lambda_{1}\ge\lambda_{2}\ge\lambda_{3}$. Therefore, 
using that the distance $d$ in the symmetric space $X$ from the identity is given as the Euclidean distance to the logarithms of the singular values, we see
\begin{align*}
 d_{\omega}(h_{\infty}(w_{0}), h_{\infty}(w_{0}'))&=\lim_{s \to +\infty} s^{-\frac{1}{3}} \sqrt{\sum_{j=1}^{3} \log^{2}(\sigma_{j}(s))} \\
 &=\lim_{s \to +\infty}s^{-\frac{1}{3}}\sqrt{\sum_{j=1}^{3}s^{\frac{2}{3}}2^{\frac{4}{3}}L^{2}\lambda_{j}^{2}+o(s^{\frac{2}{3}})} \\
 &=2^{\frac{2}{3}}L\sqrt{\sum_{j=1}^{3}\cos^{2}\left(\theta-\frac{2\pi(j-1)}{3}\right)} \\
 &= \sqrt{3}\cdot2^{\frac{1}{6}}L \ .
\end{align*}
Since $L=d_{q_{0}}(q, q')$ the proof of Equation \ref{eq:distances} is complete. \\
Part $iii)$ is a direct consequence of part $ii)$ and the fact that the coordinates inside an apartment are given by the rescaled limit of the singular values of $h_{s}(w)$. 
\end{proof}

We note, in particular, that outside the zeros of $q_{0}$ the map $h_{\infty}$ is smooth, so the set of its singular points is discrete, and is locally injective. The following result about how these flats combine outside the zeros can be found in \cite[Proposition 3.18]{KNPS_2015}; we include the proof for the convenience of the reader.

\begin{prop}\label{prop:path_in_flat} Let $\gamma:[0,1] \rightarrow \Sigma$ be a geodesic path which avoids all zeros of $q_{0}$ and which is not in the direction of a wall of the Weyl chamber. Then there is an  $A \subset \Cone_{\omega}(X)$ such that $h_{\infty}(\gamma(t)) \in A$ for all $t \in [0,1]$. 
\end{prop}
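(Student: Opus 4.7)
The plan is to cover the compact path $\gamma([0,1])$ by finitely many neighborhoods where Theorem \ref{thm:limit_outside_zeros} applies, and then show that the resulting apartments must all coincide. First, by compactness of $\gamma([0,1])$ and the fact that it avoids zeros of $q_0$, I will extract a finite subcover $U_1,\ldots,U_N$ from the neighborhoods produced by Theorem \ref{thm:limit_outside_zeros}, ordered along $\gamma$ so that each consecutive pair $U_i,U_{i+1}$ has 2-dimensional open intersection containing a sub-arc of $\gamma$. On each $U_i$, Theorem \ref{thm:limit_outside_zeros} gives an apartment $A_i \subset \Cone_\omega(X)$ with $h_\infty(U_i) \subset A_i$, and $h_\infty$ restricted to $U_i$ is an isometric embedding up to the scalar $\sqrt{3}\cdot 2^{1/6}$.

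The key step is to show that $A_i = A_{i+1}$ for every $i$. Since $h_\infty$ is locally isometric on $U_i$ and on $U_{i+1}$, the image $h_\infty(U_i \cap U_{i+1})$ is a 2-dimensional open flat subset lying in both $A_i$ and $A_{i+1}$. In the CAT(0) space $\Cone_\omega(X)$, an apartment is a maximal-dimensional totally geodesic Euclidean flat, and any two such flats sharing a 2-dimensional open subset must coincide: starting from any point of the shared open set, every point of $A_i$ is reached by a geodesic whose initial direction is tangent to both flats; by uniqueness of geodesics in a CAT(0) space, this geodesic also lies in $A_{i+1}$, so $A_i \subseteq A_{i+1}$, and by symmetry equality holds. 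The compatibility condition on charts in axiom (d) confirms the identification. By induction, all of the $A_i$ equal a common apartment $A$, and therefore $h_\infty(\gamma([0,1])) \subseteq A$.

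The hypothesis that $\gamma$ avoids wall directions of the Weyl chamber is what keeps the image inside the interior of a single Weyl-chamber direction: along $h_\infty(\gamma)$, the three real parts $\Re\int\phi_j$ remain pairwise distinct, so the forward and backward directions of the image at each interior point determine a pair of opposite Weyl chambers, which by Proposition \ref{prop:opposite_chambers} lie in a unique apartment. This rigidity reinforces the gluing above and rules out the possibility that the image could branch by straddling a wall shared between two apartments. The main obstacle is precisely the uniqueness statement that apartments sharing a 2-dimensional overlap must coincide; once this is in hand, the rest is a routine compactness and connectedness argument.
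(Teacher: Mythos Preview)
Your central claim --- that two apartments in $\Cone_\omega(X)$ which share a $2$-dimensional open set must coincide --- is false, and the argument you give for it does not work. In any thick affine $\R$-building of rank $2$, distinct apartments routinely intersect in a Weyl chamber or a half-apartment, both of which have nonempty interior in $\R^2$. Concretely, think of the rank-$1$ picture: in a tree, two bi-infinite geodesics can share an open interval and then branch apart at the endpoints of that interval. The same branching occurs in higher rank along walls. Your justification invokes ``uniqueness of geodesics in a CAT(0) space,'' but that statement says only that two \emph{points} determine a unique geodesic; it does \emph{not} say that an initial point together with an initial direction determines a unique geodesic ray in the ambient building. The geodesic from $p$ to a far-away point $q\in A_i$ certainly lies in $A_i$, and its initial germ lies in $A_{i+1}$ as well, but nothing forces it to remain in $A_{i+1}$ once it exits the overlap $h_\infty(U_i\cap U_{i+1})$.

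You do identify the right tool in your final paragraph --- Proposition~\ref{prop:opposite_chambers} on opposite Weyl chambers --- but you treat it as a decorative reinforcement rather than the engine of the proof. The paper's argument is an open--closed induction on the set $J=\{t: h_\infty(\gamma([0,t]))\text{ lies in a single apartment}\}$. At $t_0=\sup J$, one takes a nearby $t_1<t_0$ in $J$; the entire past $h_\infty(\gamma([0,t_1]))$ lies in a Weyl chamber $W_p^-$ of the apartment $A_p$ (this is where regularity of the direction is used: the segment is interior to a chamber, not along a wall), while the local apartment $A_q$ at $\gamma(t_0)$ contains a forward chamber $W_q^+$ whose germ is opposite to that of $W_p^-$. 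Proposition~\ref{prop:opposite_chambers} then produces a single apartment containing both full chambers, hence all of $h_\infty(\gamma([0,t_1]))$ together with a bit of the future past $t_0$, contradicting maximality. The point is that the backward chamber captures the \emph{entire} accumulated past, not just a local germ; this is what lets you propagate one apartment along the whole path. Your finite-cover-and-glue strategy cannot be repaired without this mechanism, because the overlap argument by itself is simply not valid in a building.
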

\begin{proof} Let $J=\{ t \in I \ | \ \text{there is apartment} \ A\subset \Cone_{\omega}(X) \ \text{such that} \ h_{\infty}(\gamma([0,t])) \in A \}$. We want to show that $J=I$. First, we note that $J$ is not empty because there is an apartment containing $h_{\infty}(\gamma(0))$ by axiom $c)$ in the definition of buildings. Moreover, it is clear that if $t \in J$ and $s \leq t$ then $s \in J$. Let $t_{0}=\sup(J)$ and suppose by contradiction that $t_{0}<1$. Let $q=\gamma(t_{0})$. By Theorem \ref{thm:limit_outside_zeros}, there is a neighborhood $\sU_{q} \subset \Sigma$ and an apartment $A_{q}$ such that $h_{\infty}(\sU_{q}) \subset A_{q}$. Up to choosing a smaller $\sU_{q}$, we can assume that $\sU_{q} \cap \gamma(I)=\gamma(J_{0})$ for some open interval $J_{0}$ containing $t_{0}$. Let $t_{1} \in J_{0} \cap J$ and let $p=\gamma(t_{1})$. Because $t_{1}<t_{0}$, there is an apartment $A_{p} \subset \Cone_{\omega}(X)$ such that $h_{\infty}(\gamma([0,t_{1}])) \subset A_{p}$. Let $W_{p}^{-}$ denote the Weyl chamber with tip at $x=h_{\infty}(p)$ containing $h_{\infty}(\gamma([0,t_{1}]))$. Since $h_{\infty}(\gamma(J_{0})) \subset A_{q}$, we can find two opposite Weyl chambers $W_{q}^{\pm}$ with tip at $x$ such that $h_{\infty}(\gamma(t)) \in W_{q}^{+}$ for $t \geq t_{1}$ and $h_{\infty}(\gamma(t)) \in W_{q}^{-}$ for $t \leq t_{1}$. Note that the germs of the sectors $W_{p}^{-}$ and $W_{q}^{+}$ are opposite because the Weyl chambers $W_{p}^{-}$ and $W_{q}^{-}$ are equivalent and $W_{q}^{+}$ is clearly opposite to $W_{q}^{-}$. Hence, by Proposition \ref{prop:opposite_chambers}, there is a unique apartment $A \subset \mathcal{B}$ that contains $W^{-}_{p}$ and $W^{+}_{q}$. Therefore, we can find $t_{2} \in J_{0} \cap J$ with $t_{2}>t_{0}$ such that $h_{\infty}(\gamma([0, t_{2}])) \subset A$. Hence $t_{2} \in J$ contradicting the fact that $t_{0}=\sup(J)$.
\end{proof}

\begin{rmk} The restriction to geodesics is not important; all that is required is that the tangent vectors to the path all lie in the interior of a single Weyl chamber and the path avoids all zeros of $q_{0}$.  
\end{rmk}

We intend to complete the description of $h_{\infty}$ by studying its behavior in a neighborhood of a zero of the cubic differential. Let us fix a coordinate chart around a zero of order $k$ of $q_{0}$ such that $q_{0}=z^{k}dz^{3}$ on the ball $B=\{|z| < \epsilon\}$.

\begin{thm}\label{thm:image_zeros} The image $h_{\infty}(B)$ consists of the union of $2(k+3)$ (cyclically ordered) bounded, closed sectors $\{W_{i}\}_{i=1}^{2(k+3)}$ of angle $\frac{\pi}{3}$ with tip at $x=h_{\infty}(0)$ such that 
\begin{enumerate}[i)]
 \item $W_{i} \cap W_{j}=\emptyset$ for all $j \neq i\pm 1$ (with indices intended modulo $2(k+3)$);
 \item $W_{i} \cap W_{i+1}$ is a geodesic segment. 
\end{enumerate}
\end{thm}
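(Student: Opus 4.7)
The plan is to decompose the punctured ball $B\setminus\{0\}$ into $2(k+3)$ closed Stokes sectors $\tilde W_i$ and analyze the image of each under $h_\infty$ separately, then verify the intersection pattern. Since the flat metric $|q_0|^{\frac{2}{3}}$ has cone angle $2\pi(k+3)/3$ at the zero of order $k$ and the Stokes rays are spaced $\pi/3$ apart in the flat metric, we obtain exactly $2(k+3)$ cyclically ordered sectors, with each adjacent pair $\tilde W_i,\tilde W_{i+1}$ sharing a Stokes ray as common boundary and with the cube roots $\phi_1,\phi_2,\phi_3$ of $q_0$ well-defined on each $\tilde W_i$.

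I would first show that $W_i := h_\infty(\tilde W_i)$ is contained in a single apartment $A_i\subset\Cone_\omega(X)$ and realizes a closed $\pi/3$-sector with tip at $x=h_\infty(0)$. For $q$ in the interior of $\tilde W_i$ away from the single Weyl-wall direction that crosses it, Theorem~\ref{thm:limit_outside_zeros} provides the local explicit formula, and the argument of Proposition~\ref{prop:path_in_flat} extends this to the whole interior of $\tilde W_i$ inside a common apartment $A_i$, since no Stokes ray is crossed there. By the Lipschitz continuity of $h_\infty$ (Proposition~\ref{prop:lim-harm-map}) together with the integrability of $\phi_j$ at $0$, one then identifies
\[
\iota_{A_i}^{-1}\circ h_\infty(q)=\left(-2^{\frac{2}{3}}\Ree\int_0^q\phi_1,\,-2^{\frac{2}{3}}\Ree\int_0^q\phi_2,\,-2^{\frac{2}{3}}\Ree\int_0^q\phi_3\right)
\]
for all $q\in\tilde W_i$, with $0$ mapping to the origin of $A_i$. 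A direct computation of the two bounding radial directions of $\tilde W_i$ (the cosine between the resulting direction vectors equals $\tfrac12$, giving angle $\pi/3$) then shows that the image is indeed a closed $\pi/3$-sector with tip at $x$.

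For claim (ii), the formula evaluated on the Stokes ray shared by $\tilde W_i$ and $\tilde W_{i+1}$ yields the same geodesic ray from $x$ in both $A_i$ and $A_{i+1}$, so $W_i\cap W_{i+1}$ contains this geodesic segment. Equality would fail only if some open 2-dimensional region of $A_i$ and $A_{i+1}$ coincided, but the explicit formulas for $W_i$ and $W_{i+1}$ span the two $\pi/3$-sectors on opposite sides of the shared ray in their respective apartments, so no such 2D overlap occurs.

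The main obstacle is claim (i): non-adjacent $W_i$ and $W_j$ do not intersect. Here the key tool is the no-branching Proposition~\ref{prop:non-zero-entry}, combined with the cyclic structure of inscribed and circumscribed triangles of the regular $(k+3)$-gon tabulated in \eqref{eq:unipotent_scheme}. The sequence of Weyl sectors at $x$ determined by the $\tilde W_i$ traces out the full cyclic sequence of such triangles and closes up exactly once after a complete loop around the zero. In particular, the $2(k+3)$ germs of sectors at $x$ defined by the $W_i$ are all distinct and, since they fill the total cone angle $2(k+3)\cdot\pi/3 = 2\pi(k+3)/3$ without exceeding it, can only overlap on their common Stokes boundaries. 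By the CAT(0) structure of $\Cone_\omega(X)$, disjointness of the interior germs at $x$ propagates to disjointness of the interiors of the full sectors $W_i$ in a neighborhood of $x$, and the injectivity of the explicit formula on each $\tilde W_i$ promotes this to global disjointness on $B$, yielding $W_i\cap W_j=\emptyset$ for $j\neq i,i\pm 1$.
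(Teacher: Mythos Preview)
Your overall framework is sensible, but the argument for claim (i) has a genuine gap, and the approach diverges from the paper's in a way that leaves the crucial step unproved.

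The core issue is your assertion that ``the $2(k+3)$ germs of sectors at $x$ defined by the $W_i$ are all distinct.'' You invoke Proposition~\ref{prop:non-zero-entry} and the cyclic triangle structure, but you never explain how a statement about nonvanishing matrix entries translates into distinctness of germs in $\Sigma_x\Cone_\omega(X)$. This is precisely the ``no branching'' phenomenon the theorem is meant to establish: a priori, two non-adjacent Stokes sectors $\tilde W_i,\tilde W_j$ could map to the \emph{same} $\pi/3$-sector in the building (folding), and nothing in your germ/cone-angle bookkeeping rules this out. The sentence ``since they fill the total cone angle \dots without exceeding it'' presupposes injectivity of the germ map, which is what needs proof. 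Your subsequent step (propagating germ disjointness to full-sector disjointness via unique CAT(0) geodesics) would be fine, but it rests on this unestablished premise.

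The paper takes a different, more direct route: rather than arguing about germs, it computes $d_\omega(h_\infty(w_i),h_\infty(w_j))$ explicitly as a rescaled limit of symmetric-space distances. Using Corollary~\ref{FT-FS-in-sector} to write $h_s$ in each sector via the matrices $A_i$ and the relation $A_i^{-1}A_j=SU_{i,j}S^{-1}$, one reduces to bounding $\log\|D^{-1}(s^{1/3}w_j)\,U_{i,j}^{-1}\,D(s^{1/3}w_i)\|_\infty$ from below. Proposition~\ref{prop:non-zero-entry} enters here concretely: the nonzero $(j_i,k_i)$-entry of $U_{i,j}^{-1}$ guarantees that the product has an entry growing like $e^{cs^{1/3}}$ with $c>0$, forcing $d_\omega>0$. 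This is where the unipotent analysis actually does work; your proposal cites the proposition but never deploys it in a computation. The same distance estimate handles (ii): one shows the \emph{interiors} of $W_i,W_{i+1}$ are disjoint, whence their intersection (nonempty by continuity of $h_\infty$ across the Stokes ray) must be the boundary geodesic. Your argument for (ii) (``no such 2D overlap occurs'' because the formulas span opposite sides in their respective apartments) describes each apartment separately but does not control how $A_i$ and $A_{i+1}$ are glued inside $\Cone_\omega(X)$.
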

\begin{proof} Let $\epsilon>0$ small and denote by $\xi_{i}$ for $i=1, \dots, 2(k+3)$ the Stokes directions emanating from $0 \in B$. Let $C_{i}$ be the sector in $B$ with tip at $0$ bounded by the directions $\xi_{i}+\epsilon$ and $\xi_{i+1}-\epsilon$. The ball $B$ can be covered by $(k+3)$ standard half-planes obtained from the natural coordinates $w=\frac{3}{k+3}z^{\frac{k+3}{3}}$. Note that two such half-planes intersect in a sector of angle $\frac{\pi}{3}$ and in these coordinates the Stokes directions correspond to the angles $\pm \frac{\pi}{6}$ and $\pm \frac{\pi}{2}$. By Corollary \ref{FT-FS-in-sector}, we can write
\begin{equation}\label{eq:h_s(w)formula}
	h_{s}(w)=P_{s}(0)A_{i}(\Id+o(\Id))F_{T}(s^{\frac{1}{3}}w)P_{s}(w)^{-1} \ .
\end{equation}
    
Since
\[
 F_T(w) = S\, \exp\left( \begin{array}{ccc} 2^{\frac23} \Ree (w) & 0&0 \\ 
0&2^{\frac23} \Ree (w/\omega)&0 \\ 0&0& 2^{\frac23} \Ree (w/\omega^2)
\end{array} \right) \, S^{-1} \ ,
\]
by the same argument as in the proof of Theorem \ref{thm:limit_outside_zeros} the $\omega$-limit $h_{T}$ of the map $P_{s}(0)(\Id+o(\Id))F_{T}(s^{\frac{1}{3}}w)P_{s}(w)^{-1}$ sends $C_{i}$ inside an apartment in $\Cone_{\omega}(X)$. Moreover, the image of a radial path in $C_{i}$ is a geodesic in the building. Indeed, if we parameterize such a path by $\gamma(t)=te^{i\theta}$ with $t \in [0,L]$ in the natural coordinate $w$, then for all $t \in [0,L]$ we have as in \eqref{eq: tree image}
\begingroup
\allowdisplaybreaks
\begin{align} \label{eq: vector distance}
    d_{\omega}^{\mathfrak{a}^{+}}(h_{T}(0)), h_{T}(\gamma(t))    &=\lim_{s\to +\infty} s^{-\frac{1}{3}}d^{\mathfrak{a}^{+}}(\Id, P_{s}(0)(\Id+o(\Id))F_{T}(s^{\frac{1}{3}}te^{i\theta})P_{s}(w)^{-1}) \notag\\
    &=t\,2^{\frac{2}{3}}(\lambda_{1}, \lambda_{2}, \lambda_{3}) 
\end{align}
\endgroup
where $(\lambda_{1}, \lambda_{2}, \lambda_{3})$ is a permutation of $(\cos(\theta), \cos(\theta-2\pi/3), \cos(\theta-4\pi/3))$ such that $\lambda_{1}\ge\lambda_{2}\ge\lambda_{3}$. Because this reordering only depends on $\theta$, which is constant along a radial path, and we already know that the image of $h_{T}$ is entirely contained in a flat, we deduce that the image $h_{T}(\gamma)$ is a straight segment of length 
\begin{align*}
    d_{\omega}(h_{T}(0), h_{T}(\gamma(L)))&=
    2^{\frac{2}{3}}L\sqrt{\sum_{j=1}^{3} \cos^{2}\left(\theta-\frac{2(j-1)\pi}{3}\right)} \\
    &=\sqrt{3}\cdot2^{\frac{1}{6}}L 
\end{align*}
Since $\epsilon$ is arbitrary and $h_{\infty}$ is continuous, we can conclude that the image of each open sector between two consecutive Stokes rays must be contained in a closed sector $W_{i}$ with tip at $x=h_{\infty}(0)$ of angle $\frac{\pi}{3}$. \\
Now, recalling that $A_{i}^{-1}A_{j}=SU_{i,j}S^{-1}$ for some product of unipotents $U_{i,j}$, we can write
\[
    {h_{\infty}}_{|_{C_{j}}}=P_{s}(0)A_{i}SU_{i,j}S^{-1}h_TP_{s}(0)^{-1} \ . 
\] 
We first use this to show that the interiors of $W_{i}$ and $W_{i+1}$ are disjoint. This immediately implies that $W_{i} \cap W_{i+1}$ is a geodesic segment because $h_{\infty}$ is continuous and each $W_{i}$ is circular sector. The previous computation \eqref{eq: vector distance} about the behavior of radial paths under $h_{\infty}$ shows that we may have $h_{\infty}(w_{i})=h_{\infty}(w_{i+1})$ for some $w_{i} \in C_{i}$ and $w_{i+1} \in C_{i+1}$ only if they are at the same distance from the zero. However, in this present case we compute from the expression $A_{i}^{-1}A_{j}=SU_{i,j}S^{-1}$ and Equation~\ref{eq:h_s(w)formula},
\begin{eqnarray*}
    & & d_{\omega}(h_{\infty}(w_{i}), h_{\infty}(w_{i+1})) \\
    &=&\lim_{s\to +\infty}s^{-\frac{1}{3}}d(F_{T}(s^{\frac{1}{3}}w_{i})P_{s}(w_{i})^{-1}, (\Id+o(\Id))SU_{i,i+1}S^{-1}F_{T}(s^{\frac{1}{3}}w_{i+1})P_{s}(w_{i+1})^{-1})\\
    &=&\lim_{s\to +\infty}s^{-\frac{1}{3}}d(P_{s}(w_{i+1})F_{T}^{-1}(s^{\frac{1}{3}}w_{i+1})SU_{i,i+1}^{-1}S^{-1}(\Id+o(\Id))F_{T}(s^{\frac{1}{3}}w_{i})P_{s}(w_{i})^{-1}, \Id) \\
    &\geq& \lim_{s\to +\infty} s^{-\frac{1}{3}}\log\left\|\frac{\sqrt{3}}{3}P_{s}(w_{i+1})F_{T}^{-1}(s^{\frac{1}{3}}w_{i+1})SU_{i,i+1}^{-1}S^{-1}(\Id+o(\Id))F_{T}(s^{\frac{1}{3}}w_{i})P_{s}(w_{i})^{-1}\right\|_{\infty} \\
    &=&\lim_{s\to +\infty} s^{-\frac{1}{3}}\log\|P_{s}(w_{i+1})SD^{-1}(s^{\frac{1}{3}}w_{i+1})U_{i,i+1}^{-1}(\Id+o(\Id))D(s^{\frac{1}{3}}w_{i})S^{-1}P_{s}(w_{i})^{-1}\|_{\infty} 
\end{eqnarray*}
where we use again that the distance $d$ in the symmetric space $X$ from the identity is given as the Euclidean distance to the logarithms of the singular values.
Now, note that in $D^{-1}(s^{\frac{1}{3}}w_{i+1})U_{i,i+1}^{-1}D(s^{\frac{1}{3}}w_{i})$, at least one element on the diagonal is of the form $e^{cs^{\frac{1}{3}}}$ for some $c>0$: here we use that we can express $D(s^{\frac{1}{3}}w_{i+1})$ and $D(s^{\frac{1}{3}}w_{i})$ in a single coordinate, observing that the matrices are distinct, as well as that the unipotent has but a single off-diagonal nonzero entry. Hence,
\begin{align*}
    d_{\omega}(h_{\infty}(w_{i}), h_{\infty}(w_{i+1}))\geq c > 0
\end{align*}
and $h_{\infty}(w_{i}) \neq h_{\infty}(w_{i+1})$. \\
The same argument shows that $h_{\infty}(C_{i})$ and $h_{\infty}(C_{j})$ are disjoint as long as $i\neq j$ and the natural coordinates $w_{i}$ and $w_{j}$ do not coincide. Note that in this case the bound
\[
  d_{\omega}(h_{\infty}(w_{i}), h_{\infty}(w_{i+1}))\geq c 
\]
can be made independent of $\epsilon$ as the diagonal terms in $D^{-1}(s^{\frac{1}{3}}w_{j})$ and $D(s^{\frac{1}{3}}w_{i})$ never multiply to $1$ in the sectors containing $C_{i}$ and $C_{j}$. Hence, in this case $W_{i}$ and $W_{j}$ are disjoint. \\
The only case that remains to be checked is when the natural coordinates on $C_{i}$ and $C_{j}$ are the same, when $|i-j|$ is a multiple of six. This happens when the angle between these sectors for the flat metric $|q_{0}|^{\frac{2}{3}}$ is at least $2\pi$. We can then apply Proposition \ref{prop:non-zero-entry} to guarantee that if the highest eigenvalue $e^{s^{\frac{1}{3}}\lambda_{i}}$ of $D^{-1}(s^{\frac{1}{3}}w_{i})$ is in position $k_{i}$ and the highest eigenvalue $e^{s^{\frac{1}{3}}\lambda_{j}}$ of $D(s^{\frac{1}{3}}w_{j})$ is in position $k_{j}$, then the $(k_{i}, k_{j})$-entry of $U_{i,j}^{-1}$ is not zero. Therefore,
\[
    d_{\omega}(h_{\infty}(w_{i}), h_{\infty}(w_{j})) \geq \lambda_{i}+\lambda_{j} > 0 \ .
\]
Again the bound can be made independent of $\epsilon$, hence $W_{i}$ and $W_{j}$ are disjoint in this case as well.
\end{proof}

We are then able to describe the global behavior of the harmonic map $h_{\infty}$.
\begin{cor}\label{cor:flat} 
Let $\tilde{q}_{0}$ denote the lift of the cubic differential $q_{0}$ to the universal cover $\tilde{\Sigma}$. Let $d_{\infty}$ be the path distance induced by $d_\omega$  on $h_{\infty}(\tilde{\Sigma}) \subset \Cone_{\omega}(X)$. Then $h_{\infty}: (\tilde{\Sigma}, \sqrt{3}\cdot2^{\frac{1}{6}}d_{\tilde{q}_{0}}) \rightarrow (h_{\infty}(\tilde{\Sigma}), d_{\infty})$ is an isometry. 
\end{cor}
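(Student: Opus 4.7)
The plan is to show that $h_\infty$ is a locally injective local isometry with scale factor $\sqrt{3}\cdot 2^{\frac{1}{6}}$, and then to lift rectifiable curves in the image to obtain both directions of the length identity. Local isometry away from zeros is already supplied by Theorem~\ref{thm:limit_outside_zeros}: on a neighborhood $U_p$ of each such $p$, the map $h_\infty$ embeds $U_p$ into a single apartment via the explicit affine formula in the cube roots of $q_0$, and the distance computation in that proof shows $d_\omega(h_\infty(q), h_\infty(q')) = \sqrt{3}\cdot 2^{\frac{1}{6}}\, d_{\tilde q_0}(q, q')$ on $U_p$. At a zero of order $k$, Theorem~\ref{thm:image_zeros} describes the image of a small disk as a union of $2(k+3)$ sectors of angle $\pi/3$ with pairwise disjoint interiors, meeting at $h_\infty(p)$ along shared geodesics; the same isometric formula applies inside each open sector, and the total cone angle $2\pi(k+3)/3$ at $h_\infty(p)$ matches the cone angle of $|q_0|^{\frac{2}{3}}$ at $p$. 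Together these produce a global local isometry on $\tilde\Sigma$, and local injectivity (immediate away from zeros, and following from the disjoint-interior description near zeros).

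The inequality $d_\infty \leq \sqrt{3}\cdot 2^{\frac{1}{6}}\, d_{\tilde q_0}$ is then immediate: for any $p, q \in \tilde\Sigma$, the $|q_0|^{\frac{2}{3}}$-geodesic $c_{pq}$, written as a concatenation of saddle connections, maps under $h_\infty$ to a piecewise-geodesic path in $\Cone_\omega(X)$ of length exactly $\sqrt{3}\cdot 2^{\frac{1}{6}}\, d_{\tilde q_0}(p,q)$, which bounds $d_\infty(h_\infty(p), h_\infty(q))$ from above.

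For the reverse inequality (and, simultaneously, the injectivity of $h_\infty$) I would employ a lifting argument. Given any rectifiable curve $c$ from $h_\infty(p)$ to $h_\infty(q)$ in $(h_\infty(\tilde\Sigma), d_\infty)$, the local inverses of $h_\infty$ should paste together into a global lift $\tilde c : [0,1] \to \tilde\Sigma$, exploiting that $(\tilde\Sigma, d_{\tilde q_0})$ is simply connected and $\mathrm{CAT}(0)$ (cone angles at zeros are $\geq 2\pi$). The lift has $d_{\tilde q_0}$-length equal to $\ell_{d_\infty}(c)/(\sqrt{3}\cdot 2^{\frac{1}{6}})$, so $d_{\tilde q_0}(p,q) \leq \ell_{d_\infty}(c)/(\sqrt{3}\cdot 2^{\frac{1}{6}})$; taking the infimum over $c$ then yields the reverse inequality, and injectivity follows because $h_\infty(p) = h_\infty(q)$ forces $d_{\tilde q_0}(p,q) = 0$.

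The hard part will be the coherence of this global lift at images of zeros, where a curve in $h_\infty(\tilde\Sigma)$ may enter $h_\infty(p)$ from one sector $W_i$ and exit through a different sector $W_j$: one must verify that the preimage sector on exit is the geometrically correct one, so that the lift is continuous and monodromy-free. This reduces to the non-branching property from Section~\ref{sec:no branching}, combined with the cyclic matching, implicit in Theorem~\ref{thm:image_zeros}, between the arrangement of the $2(k+3)$ sectors around $h_\infty(p)$ in $\Cone_\omega(X)$ and the arrangement of the $2(k+3)$ saddle-connection directions around the corresponding cone point of $(\tilde\Sigma, d_{\tilde q_0})$.
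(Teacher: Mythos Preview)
The paper's argument is considerably shorter: it observes that the rescaled induced metric $s^{-2/3}\hat g_s$ converges to $3\cdot 2^{1/3}|\tilde q_0|^{2/3}$ (Equation~\ref{eq:min-Blaschke} and Lemma~\ref{error-decay}), invokes Theorem~\ref{thm:image_zeros} to match the cone angle $2\pi + 2k\pi/3$ at the image of each zero, and concludes that $h_\infty$ restricted to each small ball is an isometry onto its image. It does not separately address global injectivity or the passage from the local length structure to the path metric $d_\infty$, so your lifting strategy aims at something beyond the paper's own justification.

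That said, the lifting argument as written is circular on the decisive point. Granting that $h_\infty$ is a locally injective local isometry from a complete domain, one obtains at best a covering map onto $(h_\infty(\tilde\Sigma),d_\infty)$, and hence unique path lifting \emph{given a starting point}. But the lift of a curve from $h_\infty(p)$ to $h_\infty(q)$, started at $p$, need only terminate at \emph{some} preimage of $h_\infty(q)$; your inequality $d_{\tilde q_0}(p,q)\le \ell_{d_\infty}(c)/(\sqrt3\cdot 2^{1/6})$ already presupposes that this preimage is $q$. In particular, if $h_\infty(p)=h_\infty(q)$ the constant path lifts to the constant path at $p$, which tells you nothing about $d_{\tilde q_0}(p,q)$. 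Simple-connectedness of $\tilde\Sigma$ is not the relevant hypothesis---it would be simple-connectedness of the \emph{image} that forces a covering map to be injective. One way to close the gap, available independently of this corollary, is weak convexity (Corollary~\ref{cor:weak_convexity}, which rests only on Theorem~\ref{main_thm_general}): if $h_\infty(p)=h_\infty(q)$, the image of the $q_0$-geodesic from $p$ to $q$ is a $d_\omega^{\mathfrak{a}^+}$-geodesic from a point to itself, hence constant, contradicting local injectivity.
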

\begin{proof} From Equation \ref{eq:min-Blaschke} and Lemma \ref{error-decay}, we know that the induced metric $\hat{g}_{s}$ rescaled by $s^{-\frac{2}{3}}$ converges pointwise to $3\cdot 2^{\frac{1}{3}}|\tilde{q}_{0}|^{\frac{2}{3}}$ and uniformly on every compact set on the complement of the zeros of $\tilde{q}_{0}$. Let $B$ be a ball centered at a zero of order $k$ as in the setting of Theorem \ref{thm:image_zeros}. Then the induced metric at $h_{\infty}(0)$ is singular since the total angle is $2\pi+\frac{2k\pi}{3}$. Moreover, in the proof of Theorem \ref{thm:image_zeros} we showed that radial paths from the origin of $q_{0}$-length $L$ are sent to geodesic arcs in the building of length $\sqrt{3}\cdot2^{\frac{1}{6}}L$. We conclude that $h_{\infty}(B)$ is isometric to $(B,3\cdot 2^{\frac{1}{3}}|q_{0}|^{\frac{2}{3}})$ and the statement follows.
\end{proof}

Not only is the image of the limiting harmonic map intrinsically a singular flat surface, but $h_{\infty}(\tilde{\Sigma})$ inherits from the building a $\frac{1}{3}$-translation surface structure as well, which allows us to reconstruct the original cubic differential $q_{0}$, up to a positive multiplicative constant. 

\begin{cor}\label{cor:translation} The image $h_{\infty}(\tilde{\Sigma})$ is naturally a $\frac{1}{3}$-translation surface. This structure is induced precisely by the cubic differential $3\cdot 2^{\frac{1}{3}}\tilde{q}_{0}$.
\end{cor}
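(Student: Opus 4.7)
My plan is to construct an atlas of $\frac{1}{3}$-translation charts on $h_\infty(\tilde\Sigma)$ and identify the underlying cubic differential with $3\cdot 2^{1/3}\tilde q_0$. The construction splits naturally into charts away from the zeros, extension across the zeros, and identification of the global constant.

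Away from zeros of $\tilde q_0$ the chart is essentially handed to us by Theorem~\ref{thm:limit_outside_zeros}. I will fix a regular point $p\in\tilde\Sigma$, choose a local cube root $\phi$ of $\tilde q_0$ on a simply connected neighborhood $U_p$, and identify the apartment $A\subset\Cone_\omega(X)$ containing $h_\infty(U_p)$ with $\C$ via the (appropriately rescaled) real-linear map $Z\mapsto(\Ree Z,\Ree(\omega Z),\Ree(\omega^2 Z))\in\A$, where $\omega=e^{2\pi i/3}$. Part~(iii) of Theorem~\ref{thm:limit_outside_zeros} then produces a complex chart $\psi_p:=\iota_A^{-1}\circ h_\infty|_{U_p}$ which is, up to a universal positive constant, equal to $q\mapsto\int_p^q\phi$. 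Two such charts $\psi_p,\psi_{p'}$ overlap via a composition of a change of cube root (multiplication by $\omega^k$ for some $k\in\{0,1,2\}$) and a change of base point (addition of $\int_{p'}^p\phi$); hence the transitions are of the form $z\mapsto\omega^k z+c$ and yield a genuine $\frac{1}{3}$-translation structure on $h_\infty(\tilde\Sigma)\setminus h_\infty(\{\text{zeros}\})$.

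To extend the structure across a zero $p_0$ of order $k$, I will use Theorem~\ref{thm:image_zeros}, which decomposes a punctured neighborhood of $x:=h_\infty(p_0)$ into $2(k+3)$ cyclically ordered Euclidean sectors $W_1,\dots,W_{2(k+3)}$ of angle $\pi/3$, glued along geodesics through $x$. On each sector I will apply the construction above, using one of the $k+3$ natural half-plane coordinates for $\tilde q_0$ from Section~\ref{sec: comparison between affine spheres}. The Stokes analysis of that section (notably Corollary~\ref{find-unitary}), combined with Proposition~\ref{prop:non-zero-entry} from Section~\ref{sec:no branching}, will force the transition across each shared geodesic $W_i\cap W_{i+1}$ to be multiplication by $\omega$. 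Since the total cone angle at $x$ is $2(k+3)\cdot\pi/3=2\pi(k+3)/3$, agreeing with the cone angle of $w^k\,dw^3$, the resulting local $\frac{1}{3}$-translation structure at $x$ is that of a positive constant multiple of $w^k\,dw^3$, and the atlas extends continuously across $x$.

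Finally, I will use Corollary~\ref{cor:flat} to pin down the global constant: the intrinsic flat metric on $h_\infty(\tilde\Sigma)$ is $3\cdot 2^{1/3}$ times the flat metric of $\tilde q_0$, so the underlying cubic differential must be the unique positive real multiple of $\tilde q_0$ realizing this scaling; tracking the constants through the identification $\C\cong\A$ and the prefactor $-2^{2/3}$ in Theorem~\ref{thm:limit_outside_zeros}(iii) yields $3\cdot 2^{1/3}\tilde q_0$. I expect the hardest step to be the extension across zeros: while the atlas outside the zeros follows almost immediately from Theorem~\ref{thm:limit_outside_zeros}, verifying that the transition between two adjacent sectors at a zero is precisely multiplication by $\omega$ (as opposed to some other root of unity, or a non-$\C$-linear affine transformation) is precisely what the Stokes-rotation bookkeeping in Sections~\ref{sec: comparison between affine spheres}--\ref{sec:no branching} accomplishes, and it is what guarantees the cone point is modeled on a single cubic differential rather than a more general singular affine structure.
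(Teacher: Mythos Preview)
Your approach is correct but considerably more elaborate than the paper's. The paper argues in two lines: by Corollary~\ref{cor:flat} the image carries the singular flat metric $3\cdot 2^{1/3}|\tilde q_0|^{2/3}$, which already determines a holomorphic cubic differential up to a unimodular constant $e^{i\theta}$; then the apartment structure at a regular point singles out the six Weyl-wall directions, split into two $W_{lin}$-orbits, and there is a unique $\theta$ making $e^{i\theta}\tilde q_0$ real and positive along the type~II walls. No atlas is built, and nothing near the zeros is revisited.

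Your route---pulling back apartment charts via Theorem~\ref{thm:limit_outside_zeros}(iii) and recognizing them as scaled natural coordinates for $\tilde q_0$---is a legitimate and more explicit alternative, and it has the merit of making the identification of the chart with $-2^{2/3}\int\phi$ completely transparent. However, your step~3 is heavier than it needs to be. Once steps~1--2 give a $\frac13$-translation atlas on $h_\infty(\tilde\Sigma)\setminus h_\infty(\{\text{zeros}\})$, the extension across a zero is automatic from the cone angle alone: Theorem~\ref{thm:image_zeros} (or simply Corollary~\ref{cor:flat}, since $h_\infty$ is an isometry) gives cone angle $2\pi(k+3)/3$, and a $\frac13$-translation structure on a punctured disk with that cone angle extends to $c\,z^k\,dz^3$ by the standard removable-singularity argument for abelian/higher differentials. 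You do not need Corollary~\ref{find-unitary} or Proposition~\ref{prop:non-zero-entry} here; those results already did their work inside the proofs of Theorems~\ref{thm:limit_outside_zeros} and~\ref{thm:image_zeros}. Also, your claim that the transition across each geodesic $W_i\cap W_{i+1}$ is multiplication by $\omega$ is not quite the right bookkeeping: a single natural half-plane coordinate covers three consecutive $\pi/3$-sectors, so chart transitions occur on the $\pi/3$-overlaps between half-planes, not at every sector boundary.
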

\begin{proof} By Corollary \ref{cor:flat}, we know that $h_{\infty}(\tilde{\Sigma})$ is a singular flat surface with metric $3\cdot 2^{\frac{1}{3}}|\tilde{q}_{0}|^{\frac{2}{3}}$. This defines on $h_{\infty}(\tilde{\Sigma})$ a holomorphic cubic differential up to multiplication by $e^{i\theta}$. On the other hand, a neighborhood of a regular point $p$ of $h_{\infty}(\tilde{\Sigma})$ is contained in an apartment $A_{p}$ by Theorem \ref{thm:limit_outside_zeros} and the directions of the walls of the Weyl-chambers based at $p$ define six special directions. If we identify $A_{p}$ with
\[
	\mathbb{A}=\{ (x_{1},x_{2},x_{3}) \in \R^{3} \ | \ x_{1}+x_{2}+x_{3}=0\} \ , 
\] 
then these directions correspond to the lines $x_{i}-x_{j}=0$ for $i\neq j$. These can be further divided into two groups, defined in accordance with which pair of the triple of coordinates coincide: if we identify the positive Weyl-chamber with 
\[
	\mathfrak{a}^{+}=\{ (x_{1},x_{2},x_{3}) \in \R^{3} \ | \ x_{1}>x_{2}>x_{3} \}
\]
the two walls are given by $x_{1}-x_{2}=0$ and $x_{2}-x_{3}=0$ and the orbits of these lines under the Weyl-group $W$ divide the six walls into two categories, which we call type I and type II. There is only one choice of $\theta$ so that along the directions of type II the differential $e^{i\theta}\tilde{q}_{0}$ is real and positive. 
\end{proof}

\begin{remark} Note that the only scaling factors $\lambda_s$ that guarantee the existence of the harmonic map $h_{\infty}$ by Proposition \ref{prop:lim-harm-map} are $\lambda_{s} \asymp s^{\frac{1}{3}}$ . Different choices of such permissible $\lambda_{s}$ lead only to homothetic asymptotic cones, hence the projective class of the translation surface $h_{\infty}(\tilde{\Sigma})$ is independent of such a choice of $\lambda_{s}$.
\end{remark}

Finally, we relate the geometry of $h_{\infty}(\tilde{\Sigma})$ with the notion of weak-convexity introduced by Anne Parreau (\cite{AP_A2}). In brief, she considered the $\mathfrak{a}^{+}$-valued distance $d_{\omega}^{\mathfrak{a}^{+}}$ on $\Cone_{\omega}(X)$ and defined $d_{\omega}^{\mathfrak{a}^{+}}$-geodesics as those paths $\gamma:[0,1] \rightarrow \Cone_{\omega}(X)$ such that for all $t \in [0,1]$
\[
	d_{\omega}^{\mathfrak{a}^{+}}(\gamma(0), \gamma(1)) =  d_{\omega}^{\mathfrak{a}^{+}}(\gamma(0), \gamma(t)) + d_{\omega}^{\mathfrak{a}^{+}}(\gamma(t), \gamma(1)) \ .
\]
Theorem \ref{main_thm_general} shows that the image under $h_{\infty}$ of a geodesic for the flat metric $|q_{0}|^{\frac{2}{3}}$ is a geodesic for the distance $d_{\omega}^{\mathfrak{a}^{+}}$. We deduce the following:

\begin{cor}\label{cor:weak_convexity} 
The surface $h_{\infty}(\tilde{\Sigma}) \subset \Cone_{\omega}(X)$ is weakly convex.
\end{cor}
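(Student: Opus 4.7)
The plan is to exhibit, for any two points $x, y \in h_\infty(\tilde{\Sigma})$, a $d_\omega^{\mathfrak{a}^+}$-geodesic lying entirely in $h_\infty(\tilde{\Sigma})$ that joins them; the natural candidate is the image under $h_\infty$ of a $|q_0|^{2/3}$-flat geodesic in $\tilde{\Sigma}$. First, I would pick preimages $w_0, w_1 \in \tilde{\Sigma}$ with $h_\infty(w_i) = x, y$ (which exist since $h_\infty$ surjects onto its image, and in fact is an isometry for the flat metric by Corollary \ref{cor:flat}) and let $c\colon [0,1] \to \tilde{\Sigma}$ be a $|q_0|^{2/3}$-geodesic from $w_0$ to $w_1$, which decomposes into a chain of saddle connections $c_1, \ldots, c_\ell$. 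The image $\gamma = h_\infty \circ c$ lies in $h_\infty(\tilde{\Sigma})$ by construction, and the only remaining task is to verify the additivity property characterizing a $d_\omega^{\mathfrak{a}^+}$-geodesic.

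The key identification is Parreau's theorem quoted earlier, which expresses $d_\omega^{\mathfrak{a}^+}$ on $\Cone_\omega(X)$ as the $\omega$-limit of $s^{-1/3} d^{\mathfrak{a}^+}$ on $X$. Since $d^{\mathfrak{a}^+}(h_s(w_0), h_s(w_1))$ is precisely the $\mathfrak{a}^+$-ordered triple of log-singular-values of the relative transition matrix $\Hol_s(c)$, Corollary \ref{cor:all_sing_values} and Theorem \ref{main_thm_general} deliver the decomposition
\[
    d_\omega^{\mathfrak{a}^+}(\gamma(0), \gamma(1)) = \sum_{i=1}^{\ell} d_\omega^{\mathfrak{a}^+}\bigl(h_\infty(c_i(0)), h_\infty(c_i(1))\bigr),
\]
which is exactly the additivity across saddle-connection endpoints. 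This step crucially exploits the alignment of the dominant-eigenvalue positions of successive saddle connections furnished by the no-branching Proposition \ref{prop:non-zero-entry}: without it, the triples of log-singular-values could fail to add after the $\omega$-limit.

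Next, I would upgrade this additivity, which holds at the finite set of saddle-connection breakpoints, to the full additivity at every $t \in [0,1]$ demanded by the definition of a $d_\omega^{\mathfrak{a}^+}$-geodesic. When $t$ lies in the interior of some $c_i$, Proposition \ref{hol-rays} together with Remark \ref{rmk:hol-diagonal} shows that the asymptotic holonomy along a straight flat segment is conjugate to a diagonal matrix $D(\gamma)$ whose log-entries depend linearly on arclength, with the $\mathfrak{a}^+$-ordering determined solely by the common direction of the segment. Since this direction is unchanged under splitting $c_i$ at $c(t)$, both the ordering and the additivity of the $\mathfrak{a}^+$-vectors are preserved on each half, so the triangle identity extends to every interior parameter as well.

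Assembling the two pieces yields
\[
    d_\omega^{\mathfrak{a}^+}(\gamma(0), \gamma(1)) = d_\omega^{\mathfrak{a}^+}(\gamma(0), \gamma(t)) + d_\omega^{\mathfrak{a}^+}(\gamma(t), \gamma(1)) \qquad \text{for every } t \in [0,1],
\]
so $\gamma \subset h_\infty(\tilde{\Sigma})$ is a $d_\omega^{\mathfrak{a}^+}$-geodesic, proving weak convexity. The main conceptual step is the translation of the singular-value asymptotics of Theorem \ref{main_thm_general} into $d_\omega^{\mathfrak{a}^+}$-additivity via Parreau's theorem; the only mildly technical point is ensuring that splitting a single saddle connection at an interior point preserves the $\mathfrak{a}^+$-ordering, which is immediate from the explicit form of $D(\gamma)$.
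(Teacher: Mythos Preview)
Your proposal is correct and follows essentially the same approach as the paper: the paper's argument is the single sentence preceding the corollary, namely that Theorem~\ref{main_thm_general} shows the $h_\infty$-image of a $|q_0|^{2/3}$-geodesic is a $d_\omega^{\mathfrak{a}^+}$-geodesic, and you have simply unpacked this claim in detail (via Parreau's identification of $d_\omega^{\mathfrak{a}^+}$ with limiting log-singular-values, the saddle-connection additivity from Theorem~\ref{main_thm_general}, and the linear dependence on arclength within a single segment).
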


The argument proves a conjecture of Katzarkov, Noll, Pandit and Simpson (\cite{KNPS_constructing}, Conjecture 8.7) in the context of harmonic maps to buildings arising from limits of Hitchin representations in $\SL(3,\R)$ along rays of holomorphic cubic differentials. (Formally, this conjecture concerns the \enquote{Finsler} distance, obtained by taking a maximum of the vector-valued distance described above, and asks that the image be geodesic in that distance.  Here the geodesic nature holds for the vector-valued distance by Theorem~\ref{main_thm_general}, and such ``geodesics'' for the two vector-valued distance are clearly geodesics for the Finsler distance. The statement for the Finsler distance then follows.) 

\begin{remark}
Note that $h_{\infty}(\tilde{\Sigma})$ is not totally geodesic in $\Cone_{\omega}(X)$. One can see this by considering a $\tilde{q}_0$-geodesic arc which passes through a zero of $\tilde{q}_0$ and makes an angle of more than $\pi$ at that zero. The geodesic connecting the endpoints of that arc lies in an apartment containing the endpoints and is necessarily Euclidean, i.e. has no interior point with an angle between incoming and outgoing directions other than $\pi$.
\end{remark}

\section{Epilogue: Triangle groups}
\label{sec: triangle}

We conclude with an example. Of course, for a closed surface $X$ of high genus, the Labourie-Loftin parametrization of the Hitchin component $\Hit_3$ is given as the cubic differential bundle over the \tec space of $X$.  Thus, even with the results of this paper, an analysis of the limits of this component would require an understanding of the dependence of a diverging sequence of representations on the rays that include them. On the other hand, for triangle groups, we may completely describe the compactification of the $\SL(3,\R)$ Hitchin component.

\subsection{The Hitchin component for triangle groups.} 
Let $p,q,r$ be integers greater than or equal to $2$, such that $p^{-1}+q^{-1}+r^{-1}<1$, and consider the tessellation of the hyperbolic plane by triangles of angles $\pi/p$, $\pi/q$, $\pi/r$. Let $\Gamma_{p,q,r}$, \enquote{the oriented $(p,q,r)$-triangle group}, be the group of orientation-preserving isometries of the tiling.
Denote the quotient of $\h^2$ by $\Gamma$ by $\Sigma^{p,q,r}$.  Choi-Goldman have shown that the space of convex real projective structures has real dimension 2 for $3\le p,q<\infty$ and $4\le r < \infty$ \cite{Choi_Goldman}. We call these triangle groups \emph{projectively deformable}.  The general case of the dimension of the $\SL(n,\R)$ Hitchin component for triangle groups was settled by Long-Thistlethwaite \cite{Long_Thistlethwaite_triangle}. Recently, more closely to our point of view, Alessandrini-Lee-Schaffhauser \cite{ALS22} use Higgs bundles to study Hitchin components for orbifolds.  In particular, for each projectively deformable $(p,q,r)$ group, the space of cubic differentials has complex dimension one, and we can extend our techniques to give a compactification of the Hitchin component in these cases. \\

We briefly address cubic differentials for oriented triangle groups.

\begin{lemma} 
Let $p,q,r$ be integers at least 2. 
The complex dimension of the space of cubic differentials on $\Sigma^{p,q,r}$ is 1 if and only if $p,q,r \ge3$ and is 0 if any of $p,q,r$ is 2. 
\end{lemma}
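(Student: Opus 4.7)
The plan is to reduce the statement to a direct count of meromorphic cubic differentials on the underlying Riemann surface $\mathbb{CP}^1$ with controlled pole orders at the three cone points, and then read off the dimension.

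First, I would carry out the local analysis at a cone point of order $n$. Writing $z$ for an orbifold coordinate in which the local stabilizer acts as $z \mapsto \zeta z$ with $\zeta = e^{2\pi i/n}$, invariance of a local holomorphic cubic differential $f(z)\,dz^3$ forces $f(\zeta z) = \zeta^{-3} f(z)$, whence $f(z)=z^{m} g(z^n)$ with $g$ holomorphic, $0\le m < n$, and $m\equiv -3\pmod n$. Passing to the coordinate $w = z^n$ on the underlying surface and using $dz^3 = n^{-3} w^{3/n-3}\,dw^3$, this differential acquires a pole at $w=0$ of order $3 - (m+3)/n$. A case check gives pole order $1$ when $n=2$ and pole order $2$ when $n \ge 3$. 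Set
\[
a_n := \begin{cases} 1 & n=2, \\ 2 & n\ge 3, \end{cases}
\]
so that a holomorphic cubic differential on $\Sigma^{p,q,r}$ corresponds precisely to a meromorphic cubic differential on $\mathbb{CP}^1$ with poles of order at most $a_p, a_q, a_r$ at the three branch points.

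Next, I would normalize the cone points to lie at $0,1,\infty \in \mathbb{CP}^1$ and write any such cubic differential in the form
\[
\omega = \frac{g(w)}{w^{a_p}(w-1)^{a_q}}\,dw^3
\]
with $g$ a polynomial. Computing the pole at $\infty$ in the coordinate $u = 1/w$ (where $dw^3 = -u^{-6}\,du^3$) converts the pole condition at infinity into the degree bound $\deg g \le a_p + a_q + a_r - 6$. Hence the complex dimension of the space of cubic differentials equals $\max(0,\,a_p + a_q + a_r - 5)$.

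Finally, I would tabulate the two cases. If $p,q,r \ge 3$, then $a_p+a_q+a_r = 6$ and the dimension is $1$; if at least one of $p,q,r$ equals $2$, then $a_p+a_q+a_r \le 5$ and the dimension is $0$. This gives both directions of the equivalence. I don't anticipate a genuine obstacle: the one subtlety is the bookkeeping in the local orbifold model at a cone point, but this is a routine invariance computation, and everything else is a straightforward dimension count on $\mathbb{CP}^1$ (equivalently, orbifold Riemann--Roch for $K_{\Sigma^{p,q,r}}^{\otimes 3}$).
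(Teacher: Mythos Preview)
Your proof is correct and follows essentially the same route as the paper: both perform the local invariance analysis at an orbifold point to determine the maximal allowed pole order on the underlying $\mathbb{CP}^1$ (order $2$ when $n\ge 3$, order $1$ when $n=2$), then reduce to a dimension count of meromorphic cubic differentials on $\mathbb{CP}^1$ with those pole constraints. Your write-up is a bit more explicit in carrying out the final count via the degree bound $\deg g \le a_p+a_q+a_r-6$, whereas the paper simply asserts the one-parameter family, but the underlying argument is the same.
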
 
\begin{proof}
Consider a local coordinate $z$ with $z=0$ mapping to an orbifold point of order $p$.  Then $w=z^p$ is a holomorphic coordinate on the orbifold.  The condition for a holomorphic cubic differential near $z=0$ to descend locally to the orbifold is that it can be written as a holomorphic cubic differential in $w$ away from $w=0$. Thus $z^n\,dz^3$ descends if and only if $n\ge0$ and $n+3$ is a multiple of $p$. 

The case $p>2$ and $n=p-3$ leads to a pole of order 2 in $w$, as $z^n\,dz^3 = p^{-3} w^{-2}\,dw^3$.  The Riemann surface formed by treating the 3 orbifold points of $\Sigma^{p,q,r}$ as smooth points has genus zero and 3 distinguished points at the triangle vertices of order $p,q,r$. Thus we seek cubic differentials on $\CP^1$ with poles of order at most 2 at 3 points, thus in a family with one complex parameter.  Other values of $p,n$ require lower order poles (or zeros) at the relevant points. There are no nonzero cubic differentials in these cases, in particular when any of $p,q,r$ is 2. 
\end{proof} 

\begin{prop} \label{prop: defn of q0}
For $3\le p,q <\infty$ and $4\le r <\infty$, the real projective structures on $\Sigma^{p,q,r}$ are parametrized by the complex scalings of the nonzero cubic differential $q_0$. 
\end{prop}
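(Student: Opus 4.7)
The plan is to combine two ingredients: the rigidity of the hyperbolic structure on the orbifold $\Sigma^{p,q,r}$ and the Labourie-Loftin parametrization in its orbifold form.

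First, I would observe that the Teichm\"uller space of the orbifold $\Sigma^{p,q,r}$ consists of a single point. This follows because a hyperbolic triangle with prescribed angles $\pi/p$, $\pi/q$, $\pi/r$ (subject to $1/p+1/q+1/r<1$) is unique up to orientation-preserving isometry: Gauss-Bonnet fixes its area as $\pi - \pi/p - \pi/q - \pi/r$, and the standard classification of hyperbolic triangles then yields uniqueness. Thus the complex structure underlying $\Sigma^{p,q,r}$ is canonically determined; call the resulting Riemann orbifold $X_0$.

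Next, I would invoke the orbifold version of the Labourie-Loftin parametrization, as developed through Higgs bundles by Alessandrini-Lee-Schaffhauser (cited in the text). This identifies the Hitchin component $\Hit_3(\Sigma^{p,q,r})$ with the bundle of holomorphic cubic differentials on $X_0$, with the prescribed pole behavior at the cone points worked out in the proof of the preceding lemma, fibered over the orbifold Teichm\"uller space. Since the base is a single point, this bundle reduces to its fiber $H^0(X_0, K_{X_0}^3)$, which by the preceding lemma is the complex line $\C \cdot q_0$. Invoking Choi-Goldman's identification of convex $\RP^2$ structures with Hitchin representations into $\SL(3,\R)$, in its orbifold formulation, one concludes that the space of real projective structures on $\Sigma^{p,q,r}$ is parametrized exactly by $\C \cdot q_0$. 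The resulting complex dimension $1$ (equivalently real dimension $2$) matches the Choi-Goldman count for projectively deformable triangle groups, giving an independent consistency check.

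The main obstacle is the rigorous extension of the Labourie-Loftin parametrization to the orbifold setting, particularly the handling of the poles of the cubic differentials at the cone points (of orders up to $2$, as in the preceding lemma). One needs an equivariant solution to Wang's equation with the correct asymptotics at each orbifold point, producing a hyperbolic affine sphere whose flat $\SL(3,\R)$ connection has monodromy invariant under the rotational stabilizer at each cone point and thus descends to a well-defined representation of $\Gamma_{p,q,r}$. This analytic input is precisely what the Higgs bundle machinery of Alessandrini-Lee-Schaffhauser provides, so I would import their result wholesale rather than reprove it here.
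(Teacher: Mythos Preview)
Your proposal is correct and matches the paper's approach: the paper states the proposition without an explicit proof, leaving it as a consequence of the surrounding discussion---namely the Choi-Goldman dimension count, the orbifold Higgs bundle parametrization of Alessandrini-Lee-Schaffhauser, and the preceding lemma computing the dimension of cubic differentials. You have simply made that implicit argument explicit, including the observation that the orbifold Teichm\"uller space is a point, which the paper takes for granted.
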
 

The fundamental domain of an oriented triangle orbifold consists of two adjacent triangles, each with vertices at the $p,q,r$ points. 

\begin{lemma} \label{triangle-cubic}
Let $3\le p,q,r<\infty$. A singular Euclidean structure on the orbifold $\Sigma^{p,q,r}$ is induced by a nonzero holomorphic cubic differential if and only if the triangles with vertices projecting to $p,q,r$ are equilateral. 
\end{lemma}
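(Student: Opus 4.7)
The plan is to show that the flat metric $|q_0|^{2/3}$ induced by a nonzero cubic differential on $\Sigma^{p,q,r}$ realizes the underlying sphere as the double of a Euclidean triangle whose three angles are all $\pi/3$, and conversely that the doubled equilateral triangle carries a natural holomorphic cubic differential with the required orders at the three orbifold points.

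For the forward direction, the proof of the preceding lemma shows that for $p,q,r\ge 3$ the one-dimensional space of orbifold cubic differentials is generated by a $q_0$ which, in the uniformizer $z$ at an orbifold point of order $p$, takes the form $q_0 = z^{p-3}(a+O(z))\,dz^3$ with $a\ne 0$, and which on the underlying $\CP^1$ with coordinate $w=z^p$ has the double pole $q_0 \sim c\,w^{-2}\,dw^3$. A direct computation of a natural coordinate $\zeta = \int q_0^{1/3} \sim 3c^{1/3}\,w^{1/3}$ then shows that a single loop in $w$ around $0$ multiplies $\zeta$ by $e^{2\pi i/3}$, so the flat metric has cone angle $2\pi/3$ at each of the three vertices on $\CP^1$; this is consistent with Gauss-Bonnet, $3(2\pi - 2\pi/3) = 4\pi = 2\pi\chi(S^2)$. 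I would then invoke the standard observation that a flat sphere with three cone points of angles $\theta_1,\theta_2,\theta_3$ summing to $2\pi$ is isometric to the double of a Euclidean triangle with vertex angles $\theta_i/2$: one verifies this by joining the three cone points by three geodesic segments in their unique homotopy classes and observing that the complement consists of a pair of Euclidean triangles of the claimed shape. With all $\theta_i = 2\pi/3$ these triangles are equilateral.

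For the converse, doubling an equilateral Euclidean triangle along its boundary produces a singular flat sphere with three cone points each of angle $2\pi/3$. Around such a cone point the flat linear holonomy is a rotation by $2\pi - 2\pi/3 \equiv -2\pi/3 \pmod{2\pi}$, and hence the entire holonomy group is contained in the cube roots of unity. Thus the flat structure is a one-third translation surface: the flat coordinate $\zeta$ is globally defined up to translation and multiplication by a cube root of unity, so $q_0 := d\zeta^3$ is a well-defined holomorphic section of $K^3$ on the smooth locus. A local computation near a cone point using $\zeta \sim w^{1/3}$ gives $q_0 \sim w^{-2}\,dw^3$ on $\CP^1$; pulling back via $w = z^p$ yields $q_0 \sim z^{p-3}\,dz^3$, which is holomorphic at $z=0$ since $p\ge 3$, and similarly at the other two orbifold points. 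Hence $q_0$ descends to a nonzero orbifold holomorphic cubic differential inducing the given flat structure.

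The main conceptual step is the identification of a flat sphere with three cone angles all equal to $2\pi/3$ with the double of an equilateral Euclidean triangle; this is a classical fact about flat metrics on the sphere with prescribed conical singularities, but it is where the symmetry of the answer becomes visible. Everything else reduces to direct local calculation and to matching the orders of $q_0$ at the orbifold points with those required for a section of $K^3$ to descend to the orbifold.
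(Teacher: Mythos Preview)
Your proof is correct but takes a different route from the paper's. The paper only argues the converse direction explicitly: starting from the equilateral flat structure, it writes down $dz^3$ in a flat coordinate on one triangle, analytically continues to the other, and checks that the monodromy around each orbifold point lies in translations and cube roots of unity, so the differential is holomorphic on the orbifold. The forward direction then comes for free from the dimension count in the preceding lemma: since the scalar multiples $\alpha\,dz^3$ already fill out the one-dimensional space of cubic differentials, every nonzero cubic differential induces the same singular Euclidean structure up to scale, namely the equilateral one. Your approach instead handles the two directions independently, computing the cone angles $2\pi/3$ on the underlying $\CP^1$ and invoking the doubled-triangle description of flat spheres with three cone points for the forward direction, and a holonomy/$\frac13$-translation-surface argument for the converse. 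Your argument is more geometric and self-contained, making the cone-angle picture explicit; the paper's is shorter because it recycles the dimension-one fact already established.
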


\begin{proof}
Consider the Euclidean structure given by forcing the given triangles to be equilateral, and let $z$ be a flat conformal coordinate on one such triangle.  Analytically continue the cubic differential $dz^3$ to the other triangle of the orbifold. Then we may check the resulting cubic differential is holomorphic on the orbifold, as any monodromy around the orbifold points amounts to translations and rotations by third roots of unity.  Such a scalar multiple $\alpha\,dz^3$, for $\alpha\in \C$, is also a holomorphic cubic differential, and together these form the one-dimensional complex vector space of all cubic differentials. 
\end{proof}

\begin{remark} \label{rem:canonical cubic}
On each of these triangle orbifolds $\Sigma^{p,q,r}$, it is useful to choose a particular representative $q_0$.  Give a consistent orientation to the equilateral triangles forming $\Sigma^{p,q,r}$. We choose $q_{0}$ so that the sides of the triangles have length 1 and at any given vertex the outgoing edges correspond to the directions on which $q_{0}$ is real and positive. 
\end{remark}

\begin{thm}\label{thm:length-comp} Consider a family of cubic differentials $se^{i\theta}q_{0}$ on a closed surface $\Sigma$, where $s\to+\infty$ and $\theta \to \theta_{\infty}$. Let $\rho_{s,\theta}$ be the corresponding family of Hitchin representations. Then the limiting data in terms of singular values and eigenvalues converge to those determined in the limit by the ray $se^{i\theta_\infty} q_0$. In particular, Theorems \ref{thm-asymptotic-singular} and \ref{harmonic-map-thm} hold in this setting.
\end{thm}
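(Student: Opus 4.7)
The plan is to revisit the proofs of Theorems \ref{thm-asymptotic-singular} and \ref{harmonic-map-thm} and verify that all estimates are uniform in $\theta$ for $\theta$ in a small neighborhood of $\theta_\infty$, so that the two-parameter limit as $(s,\theta)\to(\infty,\theta_\infty)$ exists and agrees with the one-parameter limit along the ray $se^{i\theta_\infty}q_0$. The starting observation is that the flat metric $|se^{i\theta}q_0|^{\frac{2}{3}} = s^{\frac{2}{3}}|q_0|^{\frac{2}{3}}$ is independent of $\theta$, so that all geometric data intrinsic to the flat structure (geodesics, saddle connections, the zero set, the disks $N_i$, and the path decomposition $\tilde c_\gamma$) may be chosen independently of $\theta$ in the neighborhood.

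Next I would verify that the analytic estimates in Section \ref{sec:asymptotics of Blaschke} are independent of $\theta$: the Wang equation \eqref{wang-eq-background} and the comparison quantity $\mathcal F_s$ depend on the cubic differential only through $|q_s|^2$, so $\mu_{s,\theta}=\mu_s$ and Lemmas \ref{gs-compare-qs}--\ref{lm:decay_ball} apply verbatim with no $\theta$-dependence. For the affine sphere comparison of Section \ref{sec: comparison between affine spheres}, the model frame $F_M$ and the \c{T}i\c{t}eica surface $F_T$ rotate continuously with $\theta$ via the substitution $w\mapsto e^{i\theta/3}w$ in the natural coordinate, and the bounds in Lemma \ref{Theta-bound} and Proposition \ref{Theta-conj-by-FM-small} are locally uniform in $\theta$ as long as we enlarge the neighborhood $U$ of the Stokes rays slightly so that it contains the Stokes rays of all nearby $\theta$. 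The unipotent matrices in Corollary \ref{find-unitary} and Theorem \ref{hol-arcs} are determined by a combinatorial Stokes-crossing pattern that is stable under sufficiently small rotations of the cubic differential.

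For the holonomy formulas of Sections \ref{sec:ODEs_par_trans}--\ref{sec: main theorem}, the exponents appearing in Proposition \ref{hol-rays} become
\[
\tilde\mu^i_j(\theta) = -2^{\frac{2}{3}}\Ree\!\left(\int_{\tilde\alpha_i} e^{i\theta/3}\phi_j\right), \qquad \tilde\lambda^i_j(\theta) = -2^{\frac{2}{3}}\Ree\!\left(\int_{\tilde\delta_i} e^{i\theta/3}\phi_j\right),
\]
which are continuous in $\theta$. The positivity of the relevant entries in Proposition \ref{prop:positive-entries} is likewise stable under small perturbations of $\theta$, so the leading-order formula for $A(s)$ in Lemma \ref{expansion of A(s)} and equation \eqref{eq:A(s)-multiple-large-entries} persists with continuous dependence on $\theta$, and the error estimate $\|E(s)\|=o(\|A(s)\|)$ in the proof of Theorem \ref{main_thm} holds uniformly for $\theta$ near $\theta_\infty$. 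This uniformity then yields
\[
\lim_{s\to+\infty,\,\theta\to\theta_\infty}\frac{\log\|\Hol_{s,\theta}([\gamma])\|}{s^{\frac{1}{3}}} = \sum_{i=1}^\ell \nu^i(\theta_\infty),
\]
and the analogous statements for singular values, eigenvalues, the harmonic map convergence of Proposition \ref{prop:lim-harm-map}, and the local image geometry of Theorems \ref{thm:limit_outside_zeros} and \ref{thm:image_zeros}.

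The main obstacle I would expect to be the case in which a geodesic segment is in the direction of a wall of a Weyl chamber or a Stokes direction precisely at $\theta=\theta_\infty$, but is slightly off for nearby $\theta$. At such exceptional directions two diagonal entries coincide in the limiting holonomy and the argument relies on the nonnegativity of coefficients provided by Proposition \ref{prop:positive-entries}; for nearby $\theta$ one of the entries is strictly dominant but approaches the other as $\theta\to\theta_\infty$. The key point is that Proposition \ref{prop:positive-entries} guarantees that no cancellations can occur between the leading contributions, so the sum $\sum_i\nu^i(\theta)$ depends continuously on $\theta$ through the exceptional locus, and the combinatorial unipotent structure is stable. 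Once this continuity and uniformity is established, Theorems \ref{thm-asymptotic-singular} and \ref{harmonic-map-thm} transfer immediately to the two-parameter family.
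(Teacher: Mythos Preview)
Your approach is essentially the same as the paper's: verify that each ingredient of the ray proof is uniform in $\theta$ near $\theta_\infty$, using that $|e^{i\theta}q_0|=|q_0|$ makes the Blaschke metric and all Section~\ref{sec:asymptotics of Blaschke} estimates independent of $\theta$, and that the remaining $\theta$-dependence enters continuously through the exponents in $D(\theta)$ and the unipotents.

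There is, however, a genuine gap in your handling of the Stokes-direction case. You lump it together with the Weyl-wall case and say that ``two diagonal entries coincide in the limiting holonomy,'' but that is the Weyl-wall phenomenon, not the Stokes one: along a Stokes direction the three entries of $D(\theta)$ are still distinct. The actual obstruction at a Stokes direction is that the comparison $F_M\sim A_iF_T$ from \cite{DW} used in Corollary~\ref{FT-FS-in-sector} and Proposition~\ref{prop:hol_arcs} breaks down precisely at the endpoint $\epsilon e^{i\theta_i}$ of the arc $\beta_i$. Your proposed fix of enlarging the excluded neighborhood $U$ of the Stokes rays makes this worse, not better: the arc endpoint now sits inside $U$, where Proposition~\ref{Theta-conj-by-FM-small} gives no information.

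The paper's remedy is the one already introduced before Lemma~\ref{lemma:unchanged-stokes-line}: replace the path $\tilde c$ by the $\eta$-perturbed path $\tilde c^\eta$ of Figure~\ref{broken-path-figure}, shifting each offending arc endpoint by a fixed small angle $\pm\eta$ off the Stokes direction. Then for all $\theta$ sufficiently close to $\theta_\infty$ the endpoints of $\beta_i^\eta$ stay uniformly away from the (rotating) Stokes rays, the \cite{DW} estimates apply uniformly on compact sets avoiding those rays, and the unipotents vary continuously with $\theta$ because they are read off from the geometry of the regular polygon. Your treatment of the Weyl-wall case via Proposition~\ref{prop:positive-entries} and the continuity of $\sum_i\nu^i(\theta)$ is correct and matches the paper's argument around equation~\eqref{eq:A(s)-multiple-large-entries}.
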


\begin{proof}[Outline of proof]
Assume for simplicity that $\theta_\infty=0$. 
For a given free homotopy class of loops in $\Sigma$, consider the cubic differential $e^{i\theta_\infty}q_0=q_0$ and the piecewise geodesic path $\tilde c^\eta$ considered in Figure \ref{broken-path-figure}, modified from a geodesic path along saddle connections in Stokes directions.  We compute the holonomy along $\tilde c^\eta$ as in Theorems \ref{main_thm} and \ref{main_thm_general}.  

The main consideration is to ensure that we have uniform estimates in $\theta$ as $s\to\infty$.  Note that the Blaschke metric is independent of $\theta$, and the connection form in (\ref{has-str-eq}) depends on $\theta$ only through the cubic differential $q$. The upshot is that $\theta$ varies in Equation \ref{eq:d(theta)} and thus in terms of the holonomy along linear paths in Proposition \ref{hol-rays}.  It is straightforward to show that the error terms in Proposition \ref{hol-rays} are uniform in $\theta$ as $s\to\infty$, as can be seen in terms of the proof in \cite{loftin2007flat}. In other words, the error term of the form $o(e^{-s^{\frac13}\tilde\mu^i})$ satisfies 
\begin{equation} \label{eq:uniform-error-bound}
 \frac{o(e^{-s^{\frac13}\tilde\mu^i})} { e^{-s^{\frac13}\tilde\mu^i}} \to 0
\end{equation}
uniformly in $\theta$ as $s\to\infty$. 

We also check that the estimates of \cite{DW} are uniform for $\theta$ near $\theta_\infty$ as $s\to\infty$.  Recall, for the path $\tilde c$, in the case in which at least one arc $\beta_i^\eta$ has a $q_0$-Stokes direction as an endpoint, we modify the path to $\tilde c^\eta$ to avoid all such endpoints.  Thus the same is true in a neighborhood of $\theta_\infty$ as $\theta\to\theta_\infty$.  
 There the estimates of \cite{DW} are uniform in compact sets away from the Stokes directions, in terms of the frames needed in Corollary \ref{FT-FS-in-sector} and Theorem \ref{hol-arcs}.   The product-of-unipotent terms $U$ are also continuous as $\theta\to\theta_\infty$, as they are determined by the geometry of the limiting (in this case, regular) polygon, which varies continuously.  

With these estimates in hand, the quantity we consider is the entry-wise $L^\infty$ norm of the holonomy matrix, as given in (\ref{eq:A(s)-multiple-large-entries}).  Our first concern is that the largest terms in each matrix match up with positive terms $c_{j,k}$ in the adjacent unipotent matrices in the product in (\ref{eq:A(s)-def}). This remains true by continuity of the $c_{j,k}$. If $c^\eta$ contains any saddle connection $c_i$ along a wall of the Weyl chamber, then there is more than one largest eigenvalue of the holonomy along $c_i$ for $q_0$. This is no longer true if $\theta\neq\theta_\infty$. We must consider the possibility then that this entry-wise $L^\infty$ norm may jump by a positive bounded factor, in the case that $c_{j,k} \to \sum_\alpha c_{j_\alpha,k_\alpha}$ in (\ref{eq:A(s)-multiple-large-entries}).  Fortunately the limit we take in Theorem \ref{main_thm_general}, in terms of taking a logarithm and dividing by $s^{\frac13}$, is insensitive to multiplication by a positive quantity bounded away from 0 and $\infty$. 

Finally, we must ensure that the largest terms in the product of holonomy matrices, upon taking the limit, are not affected by the various error terms we accumulate.  This is exactly the uniformity condition (\ref{eq:uniform-error-bound}).
\end{proof}

\begin{cor}
The previous theorem holds for $\Sigma$ a closed oriented orbifold of hyperbolic type.
\end{cor}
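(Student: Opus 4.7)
The strategy is to reduce the orbifold statement to the closed-surface case already established, by passing to a finite manifold cover and then descending the conclusion. By Selberg's lemma applied to the image of any of the Hitchin representations $\rho_{s,\theta}$, or more structurally by standard orbifold covering theory, there exists a finite regular manifold cover $\pi\!:\hat\Sigma\to\Sigma$ with $\hat\Sigma$ a closed oriented hyperbolic surface and $\pi_1(\hat\Sigma)\lhd\pi_1^{\mathrm{orb}}(\Sigma)$ of finite index $N$. The pullback $\hat q_0=\pi^*q_0$ is a holomorphic cubic differential on $\hat\Sigma$, whose zeros are exactly the preimages of the zeros of $q_0$ together with preimages of orbifold points (where, in the local orbifold uniformizer, $q_0=w^{n}dw^3$ pulls back via $w=z^p$ to $z^{pn+3(p-1)}dz^3$, so $\hat q_0$ is smooth with zeros of controlled order). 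The flat metric $|\hat q_0|^{2/3}$ is the metric completion of the pullback of $|q_0|^{2/3}$, and every $|q_0|^{2/3}$-geodesic saddle connection on $\Sigma$ lifts to a concatenation of $|\hat q_0|^{2/3}$-saddle connections on $\hat\Sigma$ of the same total flat length and with the same local root-periods $\int\phi_j$.

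Next, because $\rho_{s,\theta}$ restricts to a Hitchin representation $\hat\rho_{s,\theta}\!:\pi_1(\hat\Sigma)\to\SL(3,\R)$ whose harmonic map to $X$ is precisely $h_s$ (both equivariant maps are built from the same solution to the Wang equation on the universal cover $\tilde\Sigma=\tilde{\hat\Sigma}$), the entire analytic machinery of Sections~\ref{sec:asymptotics of Blaschke}--\ref{sec: harmonic map to building} and of Theorem~\ref{thm:length-comp} applies verbatim on $\hat\Sigma$ with $\hat q_0$ in place of $q_0$. In particular, Theorem~\ref{thm:length-comp} already yields the asymptotic singular value formula, the eigenvalue formula, and the convergence of $h_s$ to a harmonic map $h_\infty\!:\tilde\Sigma\to\Cone_\omega(X)$ equivariant under $\hat\rho_\infty$, together with the $\tfrac13$-translation surface description of the image.

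To descend to the orbifold, I would argue as follows. For any $[\gamma]\in\pi_1^{\mathrm{orb}}(\Sigma)$, the element $[\gamma]^N$ lies in $\pi_1(\hat\Sigma)$, and $\Hol_s(\gamma)^N=\Hol_s(\gamma^N)$. The $|q_0|^{2/3}$-geodesic representative of $[\gamma^N]$ is the $N$-fold concatenation (with appropriate lifts) of the representative of $[\gamma]$, so the saddle connections $\hat c_1,\dots,\hat c_{Nl}$ on $\hat\Sigma$ give $N$ copies of the periods $\int_{c_i}\phi_j$. Applying Theorem~\ref{thm:length-comp} on $\hat\Sigma$ to $\gamma^N$ and dividing both sides by $N$ yields
\[
\lim_{s\to\infty}\frac{\log\|\Hol_s(\gamma)\|}{s^{1/3}}=\frac1N\sum_{i=1}^{Nl}\hat\nu^i=\sum_{i=1}^l\nu^i,
\]
and likewise for each singular value and each eigenvalue. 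For the harmonic map conclusion, the limit $h_\infty$ on $\tilde\Sigma$ is already equivariant under the restriction $\hat\rho_\infty$; its equivariance under the larger group $\rho_\infty$ follows because each $h_s$ is $\rho_{s,\theta}$-equivariant and $\omega$-limits preserve equivariance provided $d(G_s(*),*)=O(s^{1/3})$, which holds for every $G_s=\rho_{s,\theta}(\gamma)$ since $(\rho_{s,\theta}(\gamma))^N=\hat\rho_{s,\theta}(\gamma^N)$ satisfies the required bound by the already-proven version of the theorem on $\hat\Sigma$.

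The main obstacle I anticipate is bookkeeping around orbifold cone points: one must verify that the $\tfrac13$-translation surface structure of Corollary~\ref{cor:translation} descends from $\hat\Sigma$ to $\Sigma$ in a way that respects the orbifold cone angles, i.e.\ that the deck group of the cover $\hat\Sigma\to\Sigma$ acts on $h_\infty(\tilde\Sigma)$ by isometries preserving the $\tfrac13$-translation structure. This is true because deck transformations are simultaneously $\rho_\infty$-equivariant isometries of $\Cone_\omega(X)$ and rotations by multiples of $2\pi/(3p)$ (etc.) in the local $\hat q_0$-coordinates near a preimage of an orbifold point of order $p$, which is precisely the symmetry type allowed by the $\tfrac13$-translation structure. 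Modulo this verification, the orbifold versions of Theorems~\ref{thm-asymptotic-singular} and~\ref{harmonic-map-thm} follow.
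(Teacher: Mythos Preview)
Your overall strategy---pass to a finite smooth cover $\hat\Sigma\to\Sigma$, lift $q_0$, and invoke the closed-surface result---is exactly the paper's approach; the paper's proof is simply the two-sentence version of your first two paragraphs. In particular, your observation that the harmonic maps $h_s$ and all the analytic estimates live on the common universal cover $\tilde\Sigma=\tilde{\hat\Sigma}$ is the real content, and it already suffices: the proofs of Theorems~\ref{main_thm_general} and~\ref{thm:length-comp} compute holonomy along a path $\tilde c_\gamma$ in $\tilde\Sigma$ using only local data (Blaschke metric, frames, unipotents at zeros), and none of that cares whether the deck transformation relating the endpoints lies in $\pi_1(\hat\Sigma)$ or in the larger orbifold group. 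So the orbifold statement follows immediately once you know the analysis is valid on $\tilde\Sigma$, which is what passing to $\hat\Sigma$ guarantees.

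Where your write-up goes wrong is the third paragraph, the $\gamma^N$ descent. For eigenvalues this is fine, since $\lambda_j(M^N)=\lambda_j(M)^N$ for the loxodromic matrices arising here. But for the operator norm and the individual singular values it is not: in general $\|M^N\|\neq\|M\|^N$ and $\sigma_j(M^N)\neq\sigma_j(M)^N$, so from $\lim_s s^{-1/3}\log\|\Hol_s(\gamma)^N\|=N\sum_i\nu^i$ you only get the inequality $\liminf_s s^{-1/3}\log\|\Hol_s(\gamma)\|\ge\sum_i\nu^i$ via submultiplicativity, not the reverse. You cannot simply ``divide both sides by $N$.'' The fix is not to attempt this descent at all: as you yourself note, the machinery applies verbatim on the universal cover, so run the holonomy computation of Theorem~\ref{main_thm_general} directly along the $|q_0|^{2/3}$-geodesic for the orbifold element $\gamma$. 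That is what the paper's terse proof is pointing at, and it makes the $\gamma^N$ maneuver (and the subsequent bookkeeping about cone points) unnecessary.
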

\begin{proof}
Consider a smooth finite orbifold cover $\check\Sigma\to \Sigma$ and lift the cubic differential to $\check\Sigma$. Then each complex scalar multiple of the cubic differential is also invariant under the orbifold deck transformations. 
\end{proof}

We next define a compactification of the Hitchin component $\Hit_3(\Delta)$ of representations of a deformable triangle group $\Delta= \Gamma_{p,q,r}$ into $\SL(3,\R)$.  

First we identify the Hitchin component $\Hit_{3}(\Gamma_{p,q,r})$ with $\C$ as in Proposition \ref{prop: defn of q0}. We then  consider the map
\begin{align*}
	\mathcal{L}_{\mathfrak{a}} : \C &\rightarrow \mathbb{P}(\mathfrak{a}^{\Gamma_{p,q,r}}) \\
				se^{i\theta} &\mapsto \{(\log(\sigma_{1}(\rho_{s,\theta}(\gamma))), \log(\sigma_{2}(\rho_{s,\theta}(\gamma))), \log(\sigma_{3}(\rho_{s,\theta}(\gamma))))\}_{\gamma \in \Gamma_{p,q,r}}
\end{align*}
where $\sigma_{j}$ denotes the $j$-th largest singular value and $\rho_{s,\theta}$ is the representation corresponding to $se^{i\theta}q_{0}$. By \cite[Theorem B]{Kim_rigidity}, this map is injective.

Next we also embed $S^1$ in $\mathbb{P}(\mathfrak{a}^{\Gamma_{p,q,r}})$ as follows. For each $e^{i\theta} \in S^1$, we consider the harmonic map $\hat{h}_\theta: (\tilde{\Sigma}) \to \Cone_{\omega}(X)$ that results as the $\omega$-limit of the family of harmonic maps $h_{s}:\tilde{\Sigma}\rightarrow X$ parameterized by a ray of cubic differentials $se^{i\theta}q_{0}$.  Then for each such map $\hat{h}_\theta$, we compute the Weyl-chamber lengths of a representative a curve class $[\gamma]$ by considering the image in the principal Weyl chamber $\mathfrak{a}$ of $\hat{h}_\theta([\gamma])$, following the prescription in Corollary~\ref{cor:translation}.  Naturally this defines a map $S^1 \to  \mathbb{P}(\mathfrak{a}^{\Gamma_{p,q,r}})$.

This map $e^{i\theta} \mapsto \mathbb{P}(\mathfrak{a}^{\Gamma_{p,q,r}})$ is also an embedding: to see this, consider the canonical cubic differential $q_0$ defined in Remark~\ref{rem:canonical cubic}, and a fixed curve class $\gamma_0$ whose $q_0$-geodesic representative makes an angle of $\theta_0$ with the positive $x$-axis in a fixed natural coordinate chart. (Here we take $\theta_0 \in [-\frac{\pi}{3}, \frac{\pi}{3}]$ so that the largest entry in the vector is $\cos \theta_0$.) Then for the \enquote{rotated} cubic differential $e^{i\theta}q_0$, the largest entry changes to $\cos (\theta_0 + \frac{\theta}{3})$.
  We regard $S^1$ as the boundary of the complex plane $\C$ in the usual way, and assert that the usual compactification $\C \cup S^1$ is taken homeomorphically to its image in $\mathbb{P}(\mathfrak{a}^{\Gamma_{p,q,r}})$.  This is proved in the next corollary.

\begin{cor} \label{cor:length_spectrum_compactification} Consider a projectively deformable triangle group. Then the map above $\C \cup S^1 \to \mathbb{P}(\mathfrak{a}^{\Gamma_{p,q,r}})$ provides a compactification of the $\SL(3,\R)$ Hitchin component $\Hit_3(\Gamma_{p,q,r}) \cong \C$ of representations of $\Gamma_{p,q,r}$. 
\end{cor}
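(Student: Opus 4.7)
The plan is to show that the combined map $\Phi\!: \C \cup S^1 \to \mathbb{P}(\mathfrak{a}^{\Gamma_{p,q,r}})$, defined on $\C$ by $\mathcal{L}_{\mathfrak{a}}$ and on $S^1$ via the Weyl-chamber lengths of the limiting harmonic maps $\hat{h}_{\theta}$, is a continuous injection of the compact disk $\overline{D}^{\,2} \cong \C \cup S^1$ into the Hausdorff projective space. Since a continuous injection from a compact Hausdorff space into a Hausdorff space is automatically a topological embedding, and $\C$ is dense in $\C \cup S^1$, this produces a compactification of $\Hit_{3}(\Gamma_{p,q,r}) \cong \C$. Thus the work breaks into three pieces: continuity on the interior, continuity at the boundary, and injectivity.

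First I would verify continuity. On $\C$, Wang's equation depends smoothly on the cubic differential, so $\rho_{s,\theta}$ varies continuously in $s e^{i\theta}$, and hence so do its singular values and $\mathcal{L}_{\mathfrak{a}}$. For continuity at the boundary, I would apply Theorem~\ref{thm:length-comp}: for any sequence $s_n e^{i\theta_n}$ with $s_n \to \infty$ and $\theta_n \to \theta_{\infty}$ and for every $\gamma \in \Gamma_{p,q,r}$,
\[
\lim_{n \to \infty} \frac{\log \sigma_j(\rho_{s_n,\theta_n}(\gamma))}{s_n^{1/3}} = \sum_{i} \nu_j^{i}(\theta_{\infty},\gamma),
\]
where the right-hand side is the $j$-th coordinate of the vector defining $\Phi(e^{i\theta_{\infty}})$. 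After projectivizing over the infinite tuple indexed by $\gamma$, the image of $s_n e^{i\theta_n}$ converges in $\mathbb{P}(\mathfrak{a}^{\Gamma_{p,q,r}})$ to $\Phi(e^{i\theta_{\infty}})$. Continuity of $\theta \mapsto \Phi(e^{i\theta})$ within $S^1$ follows because rotating $q_0$ by $e^{i\theta}$ only rotates the flat metric $|q_0|^{2/3}$, so the saddle connection decomposition of the geodesic representative of each class $\gamma$ varies continuously in $\theta$ outside a discrete set of angles; at those exceptional angles, the insensitivity of the sum $\sum_i \nu_j^i$ to saddle connections lying in Stokes or Weyl-chamber directions (Theorem~\ref{main_thm_general}) ensures continuity persists.

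Second, I would check injectivity. On $\C$, the injectivity of $\mathcal{L}_{\mathfrak{a}}$ is \cite[Theorem B]{Kim_rigidity}. On $S^1$, I would use the explicit angle argument sketched just before the corollary: for the canonical cubic differential $q_0$ of Remark~\ref{rem:canonical cubic}, one fixes a class $\gamma_0$ whose $|q_0|^{2/3}$-geodesic meets the real axis at a known angle $\theta_0$, and observes that under rotation $q_0 \mapsto e^{i\theta} q_0$ the largest coordinate of $\mathfrak{a}^+$-length becomes $\cos(\theta_0 + \theta/3)$, tracked together with the cyclic position of the maximum. This triple depends injectively on $\theta \in S^1$, so distinct points on $S^1$ give distinct projective classes. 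Finally, no interior point maps to a boundary image: by Theorem~\ref{thm:length-comp} any sequence whose image in $\mathbb{P}(\mathfrak{a}^{\Gamma_{p,q,r}})$ accumulates on $\Phi(S^1)$ must itself satisfy $s_n \to \infty$, because for fixed $s$ the function $\gamma \mapsto \log \sigma_j(\rho_{s,\theta}(\gamma))$ has a different normalization growth rate ($\gamma$ of large word length forces its value to grow, but only relative to the Hitchin reference and not with a $s^{1/3}$ scaling) than the boundary vectors, which arise only as rescaled limits.

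The main obstacle is controlling the joint behavior as $s_n \to \infty$ and $\theta_n \to \theta_{\infty}$ simultaneously: this is exactly what Theorem~\ref{thm:length-comp} accomplishes, and once it is in hand the remaining pieces (smoothness on $\C$, the explicit injectivity on $S^1$, and the topological fact that a continuous bijection out of a compact space into a Hausdorff space is a homeomorphism onto its image) combine to give the corollary.
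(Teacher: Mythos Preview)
Your proposal takes essentially the same route as the paper: both identify Theorem~\ref{thm:length-comp} as the key input, establishing that $\Phi(se^{i\theta}) \to \Phi(e^{i\theta_\infty})$ whenever $s\to\infty$ and $\theta\to\theta_\infty$. The paper's proof is terser, phrasing this single step as the existence of boundary charts, while you organize the argument more explicitly as continuity plus injectivity plus the compact-to-Hausdorff embedding principle. The injectivity ingredients you cite (Kim's theorem on the interior, the explicit angle computation on $S^1$) are exactly the ones the paper records in the paragraphs preceding the corollary.

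Two small points. First, your continuity argument on $S^1$ contains a slip: rotating $q_0$ by $e^{i\theta}$ does \emph{not} change the flat metric $|q_0|^{2/3}$ at all, so the saddle-connection decomposition of each $\gamma$ is independent of $\theta$. What varies continuously with $\theta$ is simply the value of each $\nu_j^i = -2^{2/3}\Ree\int_{c_i}\phi_j$, since the cube roots $\phi_j$ rotate; this actually makes the continuity cleaner than you state. Second, your argument that interior and boundary images are disjoint is not convincing as written: the claim that any sequence with image accumulating on $\Phi(S^1)$ must have $s_n\to\infty$ is the \emph{converse} of what Theorem~\ref{thm:length-comp} provides, and the appeal to a ``different normalization growth rate'' is unclear once one has passed to projective classes. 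The paper's own proof also does not spell this out, so you are not missing an ingredient the paper supplies; but if you want the compact-Hausdorff embedding argument to close cleanly, you should either invoke properness of $\mathcal{L}_{\mathfrak{a}}$ on the Hitchin component directly, or exhibit a concrete invariant separating the projective length spectra of genuine convex $\RP^2$ structures from those arising from the singular flat metric $|q_0|^{2/3}$.
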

\begin{proof} 
We need to show that there is a subatlas of boundary charts for the compactification comprising images of a segment $T$ of the circle $S^1$ and a sector in $\C$ defined by that range $T$ of angles; this amounts to showing that for a family $se^{i\theta}$, as $s\to \infty$ and $\theta \to \theta_\infty$, the images in $\mathbb{P}(\mathfrak{a}^{\Gamma_{p,q,r}})$ of the representations associated to $se^{i\theta}$ converge to those of the harmonic map $\hat{h}_{\theta_\infty}$. This is the content of Theorem~\ref{thm:length-comp} and that the map $S^1 \to \mathbb{P}(\mathfrak{a}^{\Gamma_{p,q,r}})$ defined on the limiting circle $S^1$ defined by Corollary~\ref{cor:translation} depended only on the limit of the representations associated to the ray $se^{i\theta_\infty}$.
\end{proof}

A compactification in terms of harmonic maps is more delicate. Proposition \ref{prop:lim-harm-map} associates to a family of harmonic maps $h_{s}:\tilde{\Sigma}\rightarrow X$ parameterized by a ray of cubic differentials $se^{i\theta}q_{0}$ a limiting harmonic map $h_{\infty}: \tilde{\Sigma} \rightarrow \Cone_{\omega}(X)$, whose image (cf. Corollary~\ref{cor:translation})  is a $\frac{1}{3}$-translation surface, isometrically embedded into $\Cone_{\omega}(X)$. However, if more general diverging families of harmonic maps are considered, corresponding to families $se^{i\theta_{s}}q_{0}$ with $s\to \infty$ and $\theta_{s}\to \theta_{\infty}$, their $\omega$-limit $\hat{h}_{\infty}$ (which exists by the same argument as in Proposition \ref{prop:lim-harm-map}) are not precluded from depending on the particular family and not only on $\theta_{\infty}$. We may refer to the more classical \tec theory case, where equivariant harmonic maps $u_{s}:\tilde{\Sigma}\rightarrow \h^{2}$ are parameterized by a family of quadratic differentials $Q_{s}$. In that case,
if $Q_{s}=sQ_{0}$ is a ray, $u_{s}$ always converges to an equivariant harmonic map $u_{\infty}:\tilde{\Sigma}\rightarrow T$ to a real tree given by projection onto the leaves of the vertical foliation of $Q_{0}$ (\cite{WolfTrees}). 
What is independent of the family is the projective class of the vertical foliation of the Hopf differential of the limiting harmonic map. In our setting, the harmonic map approach identifies the boundary points of a compactification of the Hitchin component for $\Gamma_{p,q,r}$ with projective classes of $\frac{1}{3}$-translation surfaces. \\

(The Euclidean $(3,3,3)$ triangle group also can be studied from this point of view.  There is no hyperbolic structure on this orbifold, but there is still a nowhere-vanishing cubic differential, which explicitly leads to the \c{T}i\c{t}eica example on the orbifold universal cover. As the cubic differential scales to infinity, the \c{T}i\c{t}eica scales as well, and the limiting harmonic map into the real building simply covers a single apartment.)

\begin{remark}
We conclude with an informal remark.  In the setting of these triangle groups, the limiting harmonic maps to buildings take on enough of a combinatorial nature that we may display how some of the constructions in Section~\ref{sec: harmonic map to building} apply.

The simplest triangle group for this is the $(3,3,4)$ group. One can visualize the action of this group on the hyperbolic plane in terms of a triangulation: two of the vertices of each triangle are vertices with a star of six triangles, and the remaining, say special, vertex is a vertex with a star of eight triangles.  Distinct special vertices of adjacent triangles share the same opposite edge.

\begin{figure}[h!]
\begin{center}
\includegraphics[scale=0.5]{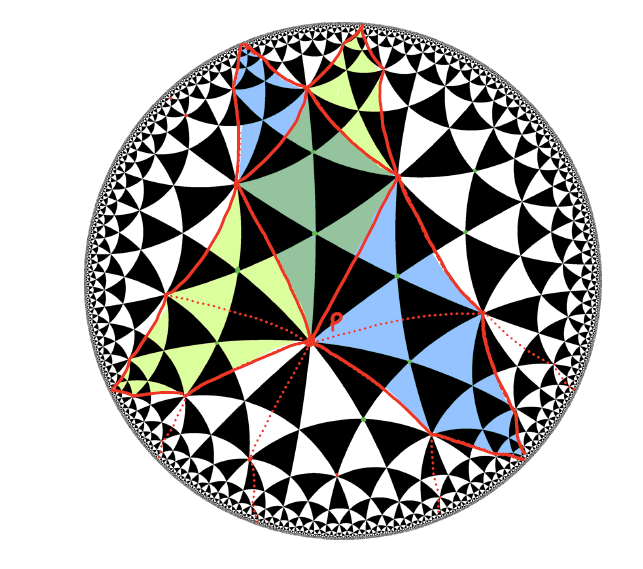}
\caption{Tessellation of the hyperbolic plane by fundamental domains for the $(3,3,4)$ triangle group. The harmonic map sends each hyperbolic triangle to a Euclidean triangle in an apartment of $X$. Two highlighted strips have images which each lie in a single apartment. Their intersection is a rhombus connecting special vertices with a star of eight triangles.}\label{fig:triangle_group}
\end{center}
\end{figure}

For the cubic differential $q_0$ defined in Remark~\ref{rem:canonical cubic}, the harmonic map takes each triangle to an equilateral triangle in a building. In terms of the local geometry of the map, the six triangles in the image of the star around a non-special vertex will lie in a common apartment in the building, but the eight triangles in the image around a special vertex cannot, due for example to cone angle considerations. We return to this local geometry momentarily.

More globally, we comment a bit on some apartments which meet the image of the harmonic map. The reader may refer to Figure \ref{fig:triangle_group} as support. Note that a geodesic segment between (images of) special vertices in adjacent triangles -- this segment will meet the opposite side orthogonally -- may be extended to an infinite geodesic which subtends one of the angles, choose it always to be on the right, at each special vertex of exactly $\pi$.  This infinite (oriented) geodesic is the boundary of a Euclidean strip of parallel geodesics in the image of the harmonic map whose preimages limits on a pair of distinct endpoints.  Each of these strips embed isometrically in an apartment which meets the image of the harmonic map in a region that contains the strips. Since there is a geodesic segment between images of special vertices that bisects each triangle in the star of the image of a special vertex, a neighborhood of the image of a special vertex is covered by eight such strips, and two (non-disjoint) strips meet in a rhombus (angles alternately $\frac{\pi}{6}$ and $\frac{\pi}{3}$) whose vertices are images of special vertices.

Note that each such strip meets four of the triangles in the star around the image of a special vertex so that the apartment containing this strip meets those four triangles, i.e.\ those four adjacent triangles around the image of a special vertex are in an embedded flat in the building.  On the other hand, five adjacent triangles around that image vertex cannot be in an embedded flat (nor apartment) since that flat would then force there to be a sixth triangle with one edge shared with the terminal triangle and one shared with the initial triangle.  That sixth triangle would combine with the three remaining image triangles in the image to form an embedded cone in the building of cone angle $\frac{4\pi}{3}$, which cannot exist in an NPC simplicial complex (e.g. points equidistant but on opposite sides of the cone point are joined by distinct geodesics on opposite sides of the cone point).
\end{remark}

\bibliographystyle{alpha}
\bibliography{bs-bibliography}

\bigskip
\noindent \footnotesize \textsc{JL: Department of Mathematics and Computer Science, Rutgers-Newark}\\
\emph{E-mail address:}  \verb|loftin@rutgers.edu|

\bigskip
\noindent \footnotesize \textsc{AT: Department of Mathematics, University of Pisa}\\
\emph{E-mail address:} \verb|andrea_tamburelli@libero.it|

\bigskip
\noindent \footnotesize \textsc{MW: School of Mathematics, Georgia Institute of Technology}\\
\emph{E-mail address:} \verb|mwolf40@gatech.edu|

\end{document}